\def\cal{\mathcal}
\newtheorem{Theorem}{Theorem}[section]
\newtheorem{Proposition}{Proposition}[section]
\newtheorem{Lemma}{Lemma}[section]
\newtheorem{Corollary}{Corollary}[section]
\theoremstyle{definition}
\newtheorem{Definition}{Definition}[section]
\newtheorem{Remark}{Remark}
\newtheorem{Assumptions}{Hypothesis}[section]
\def\R{{\mathbb{R}}}
\newcommand{\ds}{\displaystyle}
\def\fg{\mathfrak{g}}
\def\fh{\mathfrak{h}}
\def\fd{\mathfrak{d}}
\def\fg{\mathfrak{g}}
\def\fh{\mathfrak{h}}
\title{Carleman estimates for parabolic equations with interior degeneracy and Neumann boundary conditions}
\author{
{\sc Idriss Boutaayamou}\\
D\'{e}partement de Math\'{e}matiques\\ Facult\'{e} des Sciences Semlalia\\
LMDP, UMMISCO (IRD-UPMC)\\
Universit\'{e} Cadi Ayyad,\\ Marrakech 40000, B.P. 2390, Morocco\\ email: dsboutaayamou@gmail.com
\\
{\sc Genni Fragnelli\thanks{The author is a member of the Gruppo Nazionale per l'Analisi Matematica, la Probabilit\`a e le loro Applicazioni (GNAMPA) of the
Istituto Nazionale di Alta Matematica (INdAM) and she is partially supported by the reaserch project {\em Comportamento asintotico e controllo di equazioni di evoluzione non lineari}
of the GNAMPA-INdAM.
}}\\
Dipartimento di Matematica\\ Universit\`{a} di Bari "Aldo Moro"\\
Via
E. Orabona 4\\ 70125 Bari - Italy\\ email: genni.fragnelli@uniba.it\\
{\sc  Lahcen Maniar}\\
D\'{e}partement de Math\'{e}matiques\\ Facult\'{e} des Sciences Semlalia\\
LMDP, UMMISCO (IRD-UPMC)\\
Universit\'{e} Cadi Ayyad,\\ Marrakech 40000, B.P. 2390, Morocco\\ email: maniar@uca.ma}
\date{}
\begin{document}

\maketitle

\begin{abstract}
We consider a parabolic problem with degeneracy in the interior of
the spatial domain and Neumann boundary conditions. In particular, we will focus on the well-posedness of the problem and on
Carleman estimates for the associated adjoint problem. The novelty
of the present paper is that for the first time it is considered a problem with an interior degeneracy and Neumann boundary conditions so that no previous
result can be adapted to this situation. As a consequence new observability inequalities are
established.
\end{abstract}

Keywords: degenerate equations, interior degeneracy, Carleman
estimates, observability inequalities

MSC 2010: 35K65, 93B07

\section{Introduction}
The study of degenerate parabolic equations is the subject of numerous
articles and books. Indeed many problems coming from
physics, biology and economics
are described by
degenerate parabolic equations, whose linear prototype is
\begin{equation}\label{0}
\frac{\partial u}{\partial t} - Au  =h(t,x), \quad (t,x) \in (0,T) \times (0,1)
\end{equation}
with the associated desired boundary conditions. Here $T>0$ is given, $h$ belongs to a suitable Lebesgue space and ${A}u= { A}_1u:= (au_x)_x$ or ${A}u = { A}_2u:= au_{xx}$, where $a$ is a degenerate function.

In the present paper we will focus on a particular topic related to this
field of research, i.e. Carleman estimates for the adjoint problem
of the previous equation. Indeed, they have so many applications that a large
number of papers has been devoted to prove some forms of them and
possibly some applications. For example, it is well known that they
are crucial
 for inverse
problems (see, for example, \cite{salo}) and
for unique continuation
properties (see, for example,  \cite{LRL}).
In particular, they are a fundamental tool to prove observability inequalities, which
lead to global null controllability results for  the Cauchy problem associated to \eqref{0} also in
the non degenerate case (see, for instance, \cite{m0} - \cite{acf}, \cite{cf} - \cite{cfv1},
\cite{fcgb} - \cite{fm1}, \cite{LRL}, \cite{vz} and the references therein). For related systems of degenerate equations we refer, for example, to \cite{m0} and
\cite{m}.

 In most of the previous papers the authors assume that the function $a$ degenerates at the boundary of the space domain, for example $a(x)= x^k(1-x)^\alpha,$ $x \,\in \,[0,1],$ where $k$ and $\alpha$ are positive constants, and the degeneracy is {\it regular} .The question of Carleman estimates for partial differential systems with non smooth
coefficients, i.e. the coefficient $a$ is not of class $C^1$ (or even with higher regularity,
as sometimes it is required) is
not fully solved yet. Indeed, the presence of a non smooth coefficient introduces
several complications, and, in fact, the literature in this context is quite poor also in the non degenerate case (for more details see \cite{fm1}). To our best knowledge, the first results on Carleman estimates for the adjoint problem of \eqref{0} with an interior degenerate point are obtained in \cite{fm}, for a regular degeneracy, and in \cite{fm1}, for a
globally
non smooth degeneracy. We underline that in \cite{fm} and in \cite{fm1} the authors consider the problem in divergence (\cite{fm}, \cite{fm1}) or in non divergence form (\cite{fm1}) but only with {\it Dirichlet boundary conditions}. We also refer to \cite{bfm}, where an inverse source problem of a $2 \times 2$ cascade parabolic systems with interior
degeneracy is studied.

However, in all the previous papers the authors consider \eqref{0} only with {\it Dirichlet boundary conditions}. Neumann boundary conditions are considered in \cite{acf} and in \cite{f}, but again the degeneracy is at the boundary of the space domain.

The goal of this paper is to give a full analysis of \eqref{0} with {\it Neumann boundary conditions }in the case that the degeneracy occurs at the interior of the space domain; moreover, the coefficient is allowed to be {\it non smooth} in the non divergence case and in the strongly degenerate divergence case. In particular,
we consider the following problem:
\begin{equation}\label{linear}
\begin{cases}
\displaystyle{\frac{\partial u}{\partial t}} - Au  =h(t,x), & (t,x) \in Q_T,\\
u_x(t,0)=u_x(t,1)=0, & t \ge 0, \\
u(0,x)=u_0(x), & x \in (0,1),
\end{cases}
\end{equation}
where
$Q_T:=(0,T) \times (0,1)$, $Au:=A_1u:=(au_x)_x$ or $Au:=A_2u:=au_{xx}$,
$a$ degenerates at $x_0\in (0,1)$, $u_0 \in X$ and $h \in L^2(0,T;X)$. Here $X$ denotes the Hilbert space $L^2(0,1)$, in the divergence form, and  $L^2_{\frac{1}{a}}(0,1)$, in the non divergence one (for the precise definition of $L^2_{\frac{1}{a}}(0,1)$ we refer to Section \ref{non_divergence}).

We give the following definitions:
\begin{Definition}\label{def1}  The operators $A_1u
:=(au')'$ and $A_2u= au''$ are weakly degenerate if there exists $x_0 \in (0,1)$ such
that $a(x_0)=0$, $a>0$ on $[0, 1]\setminus \{x_0\}$, $a\in
W^{1,1}(0,1)$ and there exists $K_1 \in (0,1)$ such that $(x-x_0)a'
\le K_1 a$ a.e. in $[0,1]$.
\end{Definition}

\begin{Definition}\label{def2}
The operators $A_1u :=(au')'$ and $A_2u= au''$ are strongly
degenerate if there exists $x_0 \in (0,1)$
such that $a(x_0)=0$, $a>0$ on $[0, 1]\setminus \{x_0\}$, $a\in
W^{1, \infty}(0,1)$ and there exists $K_2 \in [1,2)$ such that
$(x-x_0)a' \le K_2 a$ a.e. in $[0,1]$.
\end{Definition}

Typical examples for weak and strong degeneracies are $a(x)=|x-
x_0|^{\alpha}, \; 0<\alpha<1$ and $a(x)= |x- x_0|^{\alpha}, \;
1\le\alpha<2$, respectively.

\medskip

The object of this paper is twofold:  first we analyze the well-posedness of the problem with {\it Neumann boundary conditions}; second we prove Carleman estimates. To this aim we have a new approach: first, we use a reflection procedure and then we employ the Carleman estimates for the analogue of \eqref{linear} with Dirichlet boundary conditions proved in \cite{fm1}.
Finally, as a consequence of the Carleman estimates we prove, using again a reflection procedure, observability inequalities. In particular, we prove that there exists a positive constant $C_T$ such that every solution
$v$ of
\[
\begin{cases}
v_t +Av= 0, &(t,x) \in  Q_T,
\\[5pt]
v_x(t,0)=v_x(t,1) =0, & t \in (0,T),
\\[5pt]
v(T,x)= v_T(x)\in X,
\end{cases}
\]
satisfies, under suitable assumptions, the following estimate:
 \begin{equation}\label{obs}
\|v(0)\|^2_X \le C_T\|v\chi_\omega\|^2_{L^2(0,T;X)}.
\end{equation}
Here $\chi_\omega$ is the characteristic function of the control region $\omega$ which is assumed to be an interval which contains the degeneracy point
or
an interval lying on one
side of the degeneracy point. As an immediate consequence, we can prove, using a standard
technique (e.g., see \cite[Section 7.4]{LRL}),  the
null controllability result for the linear degenerate problem:
if \eqref{obs} holds, then for
every $u_0 \in X$ there exists $h \in L^2(0,T;X)$ such that
the solution $u$ of
\begin{equation}\label{linear1}
\begin{cases}
\displaystyle{\frac{\partial u}{\partial t}} - Au  =h(t,x)\chi_\omega(x), & (t,x) \in Q_T,\\
u_x(t,0)=u_x(t,1)=0, & t \ge 0, \\
u(0,x)=u_0(x), & x \in (0,1),
\end{cases}
\end{equation}
is such that
$
u(T,x)= 0 \ \text{ for every  } \  x \in [0, 1];
$
moreover
$
\|h\|^2_{L^2(0,T;X)} \le C \|u_0\|^2_X,
$
for some universal positive constant $C$.

We conclude this introduction underlining the fact that in the present paper we consider
equations {\em in divergence} and {\em in non divergence form}, since the last one {\it cannot} be recast in
divergence form: for example, the simple equation
$u_t=a(x)u_{xx}$
can be written in divergence form as
$
u_t=(a(x)u_x)_x-a'u_x,
$
only if $a'$ does exist; in addition, as far as well-posedness is
considered for the last equation, additional conditions are
necessary. For instance, for the prototype $a(x)= |x-x_0|^K$
well-posedness is guaranteed if $K \ge 2$. However, in
\cite{fm1} the authors prove that if $a(x)=|x-x_0|^K$ the global null
controllability fails exactly when $K\ge 2$.

\medskip

 The paper is organized as follows: in Sections $2$ and $3$ we study the well-posedness of the problem and we characterize the domain of the operator in some cases. In Sections $4$ and $5$ we prove Carleman estimates for the problem in divergence and in non divergence form. As a consequence, in Section $6$, we prove observability inequalities and we conclude the paper with some comments on Carleman estimates.


A final comment on the notation: by $C$ and $C_T$ we shall denote
universal positive constants, which are allowed to vary from line to
line and depend only on the coefficients of the equation.

\section{Well posedness in the divergence case}\label{divergence_form}
In this section we consider the operator in divergence form, that is
$A_1u=(au')'$, and we distinguish, as usual, two cases: the weakly degenerate
case and the strongly degenerate one.
\subsection{Weakly degenerate operator}
Throughout this subsection we assume that the operator is weakly
degenerate.
\\
In order to prove that $A_1$, with a suitable domain, generates a
strongly continuous semigroup, we introduce, as in \cite{acf} or \cite{fggr}, the
following weighted spaces:
\[
 H^1_a (0,1):=\{ u  \text{ is absolutely
continuous in } [0,1] \text{ and }\sqrt{a} u' \in  L^2(0,1) \}
\]
with the norm
\begin{equation}\label{norm}
\|u\|_{H^1_a(0,1)}^2:= \|u\|^2_{L^2(0,1)} +
\|\sqrt{a}u'\|^2_{L^2(0,1)}
\end{equation}
and $
H^2_a(0,1) := \{ u \in H^1_a(0,1) |\,au' \in H^1(0,1)\}$
with
\begin{equation}\label{norm1}
\|u\|_{H^2_a(0,1)}^2:= \|u\|^2_{H^1_a(0,1)} +
\|(au')'\|^2_{L^2(0,1)}.
\end{equation}
 Then,
define the operator $A_1$ by $D(A_1)= \{ u \in H^2_a(0,1)| \, u'(0)=u'(1)=0\},$
 and, for any $ \,u \in D(A_1),$
$ A_1u:=(au')'.
$
As in \cite[Lemma 2.1]{fggr}, using the fact that $u'(0)=u'(1)=0$ for all $u \in D(A_1)$, one can prove the following formula of integration by parts:
\begin{Lemma}\label{green} For all $(u,v)\in D(A_1)\times H^1_a(0,1)$ one has
\begin{equation}\label{greenformula}
\int_0^1(au')' v dx= - \int_0^1 au'v' dx.
\end{equation}
\end{Lemma}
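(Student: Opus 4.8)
The plan is to establish the integration-by-parts formula \eqref{greenformula} by a density argument combined with a careful cut-off near the degeneracy point $x_0$, so that one may integrate by parts away from $x_0$ and then show the boundary contributions at $x_0$ vanish. First I would fix $(u,v)\in D(A_1)\times H^1_a(0,1)$ and, for small $\delta>0$, introduce a smooth cut-off function $\eta_\delta$ that vanishes on $(x_0-\delta,x_0+\delta)$ and equals $1$ outside $(x_0-2\delta,x_0+2\delta)$, with $|\eta_\delta'|\le C/\delta$. On the set where $a$ is bounded away from zero, $au'$ is in $H^1$ and $v$ is $H^1$ in the usual sense, so the classical integration-by-parts formula applies to $\int_0^1 (au')' \,\eta_\delta\, v\,dx$, producing $-\int_0^1 au'(\eta_\delta v)'\,dx$ plus boundary terms at $0$ and $1$ which vanish because $u'(0)=u'(1)=0$ (this is exactly the point where the Neumann condition in $D(A_1)$ is used, paralleling \cite[Lemma 2.1]{fggr}).

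Next I would let $\delta\to 0$ in both sides. The left-hand side converges to $\int_0^1 (au')'v\,dx$ by dominated convergence, since $(au')'\in L^2(0,1)$ and $v\in L^2(0,1)$ (here one checks $v\in L^2$: absolute continuity on $[0,1]$ gives $v$ bounded, hence in $L^2$). On the right-hand side, $\int_0^1 au'\eta_\delta v'\,dx \to \int_0^1 au'v'\,dx$, again by dominated convergence, using that $\sqrt a u'$ and $\sqrt a v'$ are both in $L^2$ so that $au'v'\in L^1$. The remaining term is the error $\int_0^1 au'\eta_\delta' v\,dx$, supported in $(x_0-2\delta,x_0)\cup(x_0,x_0+2\delta)$, and I must show it tends to $0$.

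The main obstacle is precisely controlling this cut-off error term. The estimate $|au'\eta_\delta' v|\le (C/\delta)\,|a|\,|u'|\,|v|$ is not by itself enough, because $a$ vanishes only like a power of $|x-x_0|$ that may be slower than $|x-x_0|$ in the weakly degenerate regime; instead one writes $|au'|=|\sqrt a u'|\sqrt a$ and uses Cauchy--Schwarz: the error is bounded by $\frac{C}{\delta}\,\|\sqrt a u'\|_{L^2(I_\delta)}\,\|\sqrt a v\|_{L^\infty}\,|I_\delta|^{1/2}$ with $I_\delta$ the support of $\eta_\delta'$. Since $\|\sqrt a u'\|_{L^2(I_\delta)}\to 0$ as $\delta\to 0$ (absolute continuity of the integral), and one has to absorb the $1/\delta$ factor: here one invokes the structural hypothesis $(x-x_0)a'\le K_1 a$ from Definition \ref{def1}, which forces $a(x)\le C|x-x_0|^{K_1}$ near $x_0$ — actually the relevant bound goes the other way, $a(x)\ge c|x-x_0|^{K_1}$ fails, so more carefully one uses that $a$ is comparable to a power $\le 1$ to bound $\sup_{I_\delta}\sqrt a \le C\delta^{K_1/2}$ and $|I_\delta|\le C\delta$, making the error at most $C\delta^{-1}\cdot\delta^{1/2}\cdot\delta^{K_1/2}\cdot\|u'/\sqrt{\,}\|$-type quantities; a cleaner route, which I would actually follow, is to choose $\eta_\delta$ piecewise linear so $\eta_\delta' = \pm 1/\delta$ on intervals of length $\delta$, estimate $\bigl|\int au'\eta_\delta' v\bigr|\le \frac1\delta\int_{I_\delta}|au'||v| \le \frac1\delta\|v\|_\infty\int_{I_\delta}\sqrt a\,|\sqrt a u'|\,dx\le \frac1\delta\|v\|_\infty\|\sqrt a\|_{L^2(I_\delta)}\|\sqrt a u'\|_{L^2(I_\delta)}$, and then note $\|\sqrt a\|_{L^2(I_\delta)}^2=\int_{I_\delta}a\le C\delta^{1+K_1}$ by the power bound on $a$, so the whole thing is $\le C\delta^{(K_1-1)/2}\|\sqrt a u'\|_{L^2(I_\delta)}\|v\|_\infty$, which $\to 0$ since $\|\sqrt a u'\|_{L^2(I_\delta)}\to 0$ and, although $\delta^{(K_1-1)/2}$ blows up when $K_1<1$, one instead keeps one power of $\delta$ from the measure to get $\delta^{(1+K_1)/2-1}=\delta^{(K_1-1)/2}$ — if this is genuinely divergent I would refine by using $\int_{I_\delta} a\,dx \le C\delta \sup_{I_\delta} a$ together with the Hardy-type inequality available in $H^1_a$ to trade the $1/\delta$ for a bounded quantity. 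Once the error term is shown to vanish, combining the three limits yields \eqref{greenformula}, completing the proof.
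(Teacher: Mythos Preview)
Your approach has a genuine gap, and the underlying reason is that you are fighting a difficulty that does not exist in this setting. Lemma \ref{green} is stated in the \emph{weakly} degenerate subsection, where by definition every $v\in H^1_a(0,1)$ is absolutely continuous on the whole of $[0,1]$ (not merely on $[0,1]\setminus\{x_0\}$). Since $u\in D(A_1)\subset H^2_a(0,1)$ means $au'\in H^1(0,1)\subset C([0,1])$, both $au'$ and $v$ are absolutely continuous on $[0,1]$; their product is therefore absolutely continuous, the identity $(au'\,v)'=(au')'v+au'\,v'$ holds a.e., and one integrates over $[0,1]$ directly. The boundary terms $[au'\,v]_0^1$ vanish because $u'(0)=u'(1)=0$, and \eqref{greenformula} follows in one line. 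This is the route the paper follows (by reference to \cite[Lemma 2.1]{fggr}); no cut-off near $x_0$ is needed. The cut-off/localisation argument is reserved for the \emph{strongly} degenerate case treated in the next subsection, where $v$ is only locally absolutely continuous on $[0,x_0)\cup(x_0,1]$ and a characterisation of the domain is required first.

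The error-term analysis you attempt also breaks down on its own terms. The structural inequality $(x-x_0)a'\le K_1 a$ integrates to the \emph{lower} bound $a(x)\ge c\,|x-x_0|^{K_1}$ near $x_0$, not the upper bound $a(x)\le C|x-x_0|^{K_1}$ that you invoke to get $\int_{I_\delta}a\le C\delta^{1+K_1}$; indeed $a$ may vanish arbitrarily slowly (any $|x-x_0|^\alpha$ with $0<\alpha\le K_1$ satisfies the hypothesis). Consequently the factor $\delta^{(K_1-1)/2}$ in your estimate is not available, and the fallback to a ``Hardy-type inequality'' you sketch is neither stated precisely nor available in $H^1_a$ in the form you would need. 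The fix is not to refine the cut-off estimate but to drop the cut-off entirely and use the global absolute continuity of $v$.
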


Now, let us go back to problem
\eqref{linear}, recalling the following
\begin{Definition}
If $u_0 \in L^2(0,1)$ and $h\in L^2(Q_T):= L^2(0,T; L^2(0,1))$, a function $u$ is said to
be a weak solution of \eqref{linear} with $A= A_1$ if
\[
u \in C([0, T]; L^2(0,1)) \cap L^2(0, T; { H}^1_a(0,1))
\]
and
\[
\begin{aligned}
&\int_0^1u(T,x)\varphi(T,x)\, dx - \int_0^1 u_0(x) \varphi(0,x)\, dx
- \int_{Q_T}u\varphi_t \,dxdt =
\\&- \int_{Q_T} au_x
\varphi_x\,dxdt + \int_{Q_T} h\varphi \,dx dt
\end{aligned}
\]
for all $\varphi \in H^1(0, T; L^2(0,1)) \cap L^2(0, T; {
H}^1_a(0,1))$.
\end{Definition}
Hence, the next result holds.
\begin{Theorem}\label{prop} The operator $A_1: D(A_1) \to L^2(0,1)$ is self--adjoint,
nonpositive on $L^2(0,1)$ and it generates an analytic
contraction semigroup of angle $\pi/2$. Therefore, for all $h
\in L^2(Q_T)$ and $u_0 \in
L^2(0,1)$, there exists a unique solution
\[u \in
C\big([0,T]; L^2(0,1)\big) \cap L^2 \big(0,T;
H^1_a(0,1)\big)\]
of \eqref{linear} such that
\begin{equation}\label{stima2w}
\sup_{t \in [0,T]}
\|u(t)\|^2_{L^2(0,1)}+\int_0^T\|u(t)\|^2_{H^1_a
(0,1)} dt \le
C_T\left(\|u_0\|^2_{L^2(0,1)}+\|h\|^2_{L^2(Q_T)}\right),
\end{equation}
for some positive constant $C_T$. Moreover, if $h \in W^{1,1}(0,T; L^2(0,1))$ and $u_0 \in
H^1_a(0,1)$, then
\begin{equation}\label{regularity1}
u\in C^1\big([0,T]; L^2(0,1)\big) \cap C\big([0,T];
D(A_1)\big),
\end{equation}
and there exists a positive constant $C$ such that
\begin{equation}\label{stima3w}
\begin{aligned}
&\sup_{t \in [0,T]}\left(\|u(t)\|^2_{H^1_a(0,1)}
\right)+ \int_0^{T}
\left(\left\|u_t\right\|^2_{L^2(0,1)}+
\left\|(au_{x})_x\right\|^2_{L^2(0,1)}\right)dt\\
&\le C
\left(\|u_0\|^2_{H^1_a(0,1)} +
\|h\|^2_{L^2(Q_T)}\right).
\end{aligned}\end{equation}
\end{Theorem}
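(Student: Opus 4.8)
The plan is to proceed via semigroup theory, following and adapting the strategy of \cite{acf} and \cite{fggr}. First I would establish that $A_1$ is symmetric and nonpositive: applying the integration by parts formula of Lemma \ref{green} with $v=u \in D(A_1)$ gives
\[
\int_0^1 (au')'\, u\, dx = -\int_0^1 a (u')^2\, dx \le 0,
\]
and, more generally, for $u,v \in D(A_1)$ one obtains $\int_0^1 (au')' v\, dx = -\int_0^1 a u' v'\, dx = \int_0^1 u\,(av')'\, dx$, so $A_1$ is symmetric and $\le 0$.

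Second, I would show that $I - A_1$ is onto $L^2(0,1)$. For this, consider the bilinear form $B(u,v) := \int_0^1 uv\, dx + \int_0^1 a u' v'\, dx$ on $H^1_a(0,1)$, which is continuous and coercive for the norm \eqref{norm}; by Lax--Milgram, for every $f \in L^2(0,1)$ there is a unique $u \in H^1_a(0,1)$ with $B(u,\varphi)=\int_0^1 f\varphi\, dx$ for all $\varphi \in H^1_a(0,1)$. Testing with smooth $\varphi$ compactly supported away from $x_0$ and the endpoints shows that $au'$ is locally $H^1$ off $x_0$ and $u-(au')'=f$ there; a bootstrap using $a\in W^{1,1}(0,1)$ and the weak degeneracy condition of Definition \ref{def1} (which makes the weighted Sobolev embeddings available near the interior point $x_0$) upgrades this to $u \in H^2_a(0,1)$, and then integrating by parts against a general $\varphi$ and matching the boundary terms forces $u'(0)=u'(1)=0$. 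Hence $u \in D(A_1)$ and $(I-A_1)u=f$. A symmetric, nonpositive operator with $\mathrm{Ran}(I-A_1)=L^2(0,1)$ is self--adjoint, and a nonpositive self--adjoint operator generates, by the spectral theorem, an analytic contraction semigroup of angle $\pi/2$.

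For the well-posedness part, the mild solution given by the variation of constants formula lies in $C([0,T];L^2(0,1))$; multiplying the equation by $u$, integrating over $(0,1)$ and invoking Lemma \ref{green} yields $\tfrac12\tfrac{d}{dt}\|u(t)\|^2_{L^2(0,1)} + \int_0^1 a u_x^2\, dx = \int_0^1 h u\, dx$, whence, integrating in $t$, recalling \eqref{norm} and applying Gronwall, one gets \eqref{stima2w} together with $u \in L^2(0,T;H^1_a(0,1))$. When $u_0 \in H^1_a(0,1)$ (the form domain of $-A_1$) and $h \in W^{1,1}(0,T;L^2(0,1))$, the classical regularity theory for analytic semigroups gives \eqref{regularity1}; the quantitative bound \eqref{stima3w} then follows by multiplying the equation by $u_t$ (equivalently $-A_1u$), integrating by parts in $x$ via Lemma \ref{green}, integrating in $t$ and absorbing the crossed terms.

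The step I expect to be the main obstacle is the verification that the variational solution produced by Lax--Milgram actually belongs to $D(A_1)$ --- namely that $a u' \in H^1(0,1)$ \emph{across} the degeneracy point $x_0$ and that the Neumann conditions $u'(0)=u'(1)=0$ are genuinely attained. This is where the interior location of $x_0$ matters: one must analyze the behaviour of $u$ and of the flux $au'$ on both sides of $x_0$ using the structural hypotheses on $a$, and it is precisely this local analysis near an interior degeneracy (rather than a boundary one, as in \cite{acf} and \cite{f}) that is not covered by the existing literature.
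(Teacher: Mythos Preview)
Your approach is essentially the same as the paper's: symmetry and nonpositivity via Lemma \ref{green}, surjectivity of $I-A_1$ via Lax--Milgram on $H^1_a(0,1)$, and the energy estimates by multiplying the equation first by $u$ and then by $-(au_x)_x$ (or, equivalently up to the equation, by $u_t$). The only point worth flagging is that the step you anticipate as the ``main obstacle'' is in fact straightforward in the weakly degenerate case: since $C_c^\infty(0,1) \subset H^1_a(0,1)$, you may test the variational identity directly with $\varphi \in C_c^\infty(0,1)$ (there is no need to stay away from $x_0$), and the relation $\int_0^1 a u'\varphi'\,dx = \int_0^1 (f-u)\varphi\,dx$ immediately yields $(au')' = u-f \in L^2(0,1)$ in the distributional sense on the \emph{whole} interval, so that $au' \in H^1(0,1)$ globally --- no local analysis or bootstrap near $x_0$ is required, and the Neumann conditions then drop out of the boundary term $[au'v]_0^1$ exactly as you describe.
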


\begin{proof} Observe that $D(A_1)$ is dense in $L^2(0,1)$. In order to show that
$A_1$ is nonpositive and self-adjoint it suffices to prove that
$A_1$ is symmetric, nonpositive and $(I-A_1)(D(A_1))=L^2(0,1)$. Following \cite{fggr}, one can prove that
$A_1$  is symmetric
and
nonpositive. Now, we prove that
$ I-A_1$ is surjective, since the proof is quite different.

First of all, observe that $H^1_a(0,1)$ equipped with the inner
product
$
(u, v)_1:= \int_0^1 (u v + au'v') dx,
$
for any $u, v \in H^1_a(0,1)$, is a Hilbert space. Moreover,
$
H^1_a(0,1) \hookrightarrow L^2(0,1) \hookrightarrow (H^1_a(0,1))^*,
$
where $(H^1_a(0,1))^*$ is the dual space of $H^1_a(0,1)$ with
respect to $L^2(0,1)$ . Now, for
 $f \in L^2(0,1)$,
consider the functional $F: H^1_a(0,1) \rightarrow \R$ defined as
$\displaystyle F(v) := \int_0^1 f v dx$. Clearly, it belongs to $(H^1_a(0,1))^*$. As a
consequence, by the Lax--Milgram Lemma, there exists a unique  $u \in
H^1_a(0,1)$ such that for all $v \in H^1_a(0,1)$
$
(u, v)_1= \int_0^1 f v dx.
$
In particular, since $C_c^\infty(0,1) \subset H^1_a(0,1)$,
the previous equality holds for all $v \in C_c^\infty(0,1)$, i.e.
$
\int_0^1au'v'dx = \int_0^1 (f-u)v dx,$  for all $v
\;\in\; C_c^\infty(0,1).
$
Thus, the distributional derivative of $au'$ is a function in
$L^2(0,1)$, that is $au' \in H^1(0,1)$ (recall that $\sqrt{a}u' \in
L^2(0,1)$) and $(au')' = u-f$ a.e. in $(0,1)$. Then $u \in
H^2_a(0,1)$ and, proceeding as in \cite[Proposition VIII.16]{b}, one can prove that $u'(0)=u'(1)=0$. In fact, by the Gauss Green Identity and
$
(u, v)_1= \int_0^1 f v dx,$  one has that for all $v \in H^1_a(0,1)$
\begin{equation}\label{b1}
\int_0^1 (au')' v dx = [au' v]_{x=0}^{x=1} - \int_0^1 au'v' dx =  [au' v]_{x=0}^{x=1} - \int_0^1 (f-u) vdx.
\end{equation}
In particular, the previous equality holds for all $v \in C_c^\infty(0,1)$. Thus, $[au' v]_{x=0}^{x=1}=0$ for all $v \in C_c^\infty(0,1)$ and
$
(au')' = u-f \; \text{ a. e. in } \;(0,1).
$
Coming back to \eqref{b1}, it becomes
$
[au' v]_{x=0}^{x=1}=0, \; \text{for all}\; v \in H^1_a(0,1).
$
Since $v(0)$ and $v(1)$ are arbitrary and $a$ does not degenerate in $0$ and in $1$, one can conclude that
$
u'(0)=u'(1)=0.
$

Hence $u \in D(A_1)$, and by $(u, v)_1= \int_0^1 f v dx$  and Lemma \ref{green}, we
have
$
\int_0^1(u -(au')'- f)v dx =0.
$
Consequently,
$
u \in D(A_1)$ and $u- A_1u=f.
$

Finally, $A_1$ being a nonpositive self--adjoint operator on a Hilbert
space, it is well known that $(A_1, D(A_1))$ generates a cosine family
and an analytic contractive semigroup of angle $\frac{\pi}{2}$ on
$L^2(0,1)$ (see, e.g., \cite{fggr})).

In the rest of the proof, following \cite[Theorem 2.1]{fm1}, we will prove \eqref{stima2w}--\eqref{stima3w}. First, being
$A_1$ the generator of a strongly continuous semigroup on
$L^2(0,1)$, if $u_0\in L^2(0,1)$, then
the solution $u$ of \eqref{linear} belongs to $C\big([0,T];
L^2(0,1)\big) \cap L^2 \big(0,T;
H^1_a(0,1)\big)$, while, if $u_0\in D(A_1)$ and $h \in W^{1,1}(0,T; L^2(0,1))$, then $u\in
C^1\big([0,T]; L^2(0,1)\big) \cap C\big([0,T];
H^2_a(0,1)\big)$ by \cite[Proposition 3.3]{dp} or \cite[Proposition 4.1.6]{ch}.

Now, we shall prove \eqref{regularity1} and \eqref{stima3w}, from which the last required
regularity property for $u$ will follow by standard linear
arguments.
First, take $u_0\in D(A_1)$ and multiply the equation of \eqref{linear} by $u$; by the
Cauchy--Schwarz inequality we obtain for every $t\in (0,T]$,
\begin{equation}\label{derivo}
\frac{1}{2}\frac{d}{dt}\|u(t)\|^2_{L^2(0,1)}+
\|\sqrt{a}u_x(t)\|^2_{L^2(0,1)}\leq
\frac{1}{2}\|u(t)\|^2_{L^2(0,1)}+\frac{1}{2}
\|h(t)\|^2_{L^2(0,1)},
\end{equation}
from which
\begin{equation}\label{sottoderivo}
\|u(t)\|^2_{L^2(0,1)}\leq
e^T\left(\|u(0)\|^2_{L^2(0,1)}+\|h\|_{L^2(Q_T)}^2
\right)
\end{equation}
for every $t\leq T$. From
\eqref{derivo} and \eqref{sottoderivo} we immediately get
\begin{equation}\label{sottosotto}
\int_0^T\|\sqrt{a}u_x(t)\|^2_{L^2(0,1)}dt\leq
C_T\left(\|u(0)\|^2_{L^2(0,1)}+\|h\|_{L^2(Q_T)}^2
\right)
\end{equation}
for every $t\leq T$ and some universal constant $C_T>0$. Thus, by
\eqref{sottoderivo} and \eqref{sottosotto}, \eqref{stima2w} follows
if $u_0\in D(A_1)$. Since $D(A_1)$ is dense in $L^2(0,1)$,
the same inequality holds if $u_0\in L^2(0,1)$.

Now, we multiply the equation by $-(au_x)_x$, we integrate on $(0,1)$
and we easily get
$
\displaystyle \frac{d}{dt}\|\sqrt{a}u_x(t)\|^2_{L^2(0,1)}+\|(au_{x})_x(t)\|^2_{L^2(0,1)}\leq
\|h(t)\|_{L^2(0,1)}^2
$
for every $t$, so that, as before, we find $C_T'>0$ such that
\begin{equation}\label{mah}
\|\sqrt{a}u_x(t)\|_{L^2(0,1)}+\int_0^T\|(au_{x})_x(t)\|^2_{L^2(0,1)}dt
\leq
C_T'\left(\|\sqrt{a}u_x(0)\|_{L^2(0,1)}+\|h\|_{L^2(Q_T)}^2\right)
\end{equation}
for every $t\!\leq \!T$.\!
Finally, from $u_t\!\!=\!\!(au_{x})_x+h$, squaring and integrating,
we find
$
\!\int_0^T\!\!\!\|u_t(t)\|_{L^2(0,1)}^2\!\leq\!
C\left(\!\int_0^T\|(au_{x})_x\|^2_{L^2(0,1)}\!\!+\!\!\|h\|_{L^2(Q_T)}^2
\!\right),
$
and together with \eqref{mah} we find
\begin{equation}\label{allafine}
\int_0^T\|u_t(t)\|_{L^2(0,1)}^2\leq
C\left(\|\sqrt{a}u_x(0)\|_{L^2(0,1)}+\|h\|_{L^2(Q_T)}^2\right).
\end{equation}

In conclusion, \eqref{derivo}, \eqref{sottoderivo}, \eqref{mah} and \eqref{allafine}
give \eqref{stima2w} and \eqref{stima3w}.  Clearly,
\eqref{regularity1} and \eqref{stima3w} hold also if
$u_0\in H^1_a(0,1)$,  since $H^2_a(0,1)$
is dense in $ H^1_a(0,1)$.
\end{proof}

\subsection{Strongly degenerate operator}
In this subsection we assume that the operator is strongly
degenerate. Following \cite{acf},  we introduce the weighted space
\[
\begin{aligned}
H^1_a(0,1):=\{ u \in L^2(0,1) \ \mid  \,&u \text{ locally absolutely
continuous in } [0,x_0) \cup (x_0,1] \\ &\text{ and } \sqrt{a} u' \in  L^2(0,1)\}
\end{aligned}
\]
with the norm given in \eqref{norm}. Define the
operator $A_1$ by
$D(A_1)= \{ u \in H^2_a(0,1)| \, u'(0)=u'(1)=0\},$
 and, for any $ \,u \in D(A_1),$
$A_1u:=(au')',
$
where $(H^2_a(0,1), \|\cdot\|_{H^2_a(0,1)})$ is defined as before. Since in this case a
function $u \in H^2_a(0,1)$ is locally absolutely continuous in
$[0,1]\setminus\{x_0\}$ and not necessarily absolutely continuous in
$[0,1]$ as for the weakly degenerate case, equality \eqref{greenformula}
is not true a priori. Thus, as in \cite{fggr}, we have to prove again the
formula of integration by parts. To do this, an idea is to characterize the domain of $A_1$.
The next results hold:
\begin{Proposition}\label{characterization}
Let
\[
\begin{aligned} X:=\{ u \in L^2(0,1)\,| \ &u \text{ locally
absolutely continuous in } [0,1]\setminus \{x_0\},  \\ &\sqrt{a}u'
\in L^2(0,1), au \text{ is continuous at } x_0 \;\text{and }
(au)(x_0)=0\}.
\end{aligned}
\]
Then $
H^1_a(0,1)=X.
$
\end{Proposition}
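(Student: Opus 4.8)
The plan is to prove the two inclusions $X \subseteq H^1_a(0,1)$ and $H^1_a(0,1) \subseteq X$ separately, the first being essentially trivial and the second requiring the degeneracy condition in an essential way. For the inclusion $X \subseteq H^1_a(0,1)$, observe that the defining conditions of $X$ — local absolute continuity on $[0,1]\setminus\{x_0\}$, $\sqrt{a}u' \in L^2(0,1)$, and $u \in L^2(0,1)$ — already encode membership in $H^1_a(0,1)$ as defined in the strongly degenerate subsection; the extra requirements that $au$ be continuous at $x_0$ with $(au)(x_0)=0$ are additional constraints, so this direction is immediate.

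The substance is the reverse inclusion $H^1_a(0,1) \subseteq X$. Fix $u \in H^1_a(0,1)$; I must show $au$ extends continuously to $x_0$ with value $0$. The key estimate comes from writing, for $x_0 < x < y \le 1$,
\[
(au)(y) - (au)(x) = \int_x^y (au)'(s)\,ds = \int_x^y \big(a'(s) u(s) + a(s)u'(s)\big)\,ds,
\]
and then controlling each term near $x_0$. For the term $\int_x^y a u' \,ds$, the Cauchy--Schwarz inequality gives $\int_x^y |a u'|\,ds \le \big(\int_x^y a\,ds\big)^{1/2}\big(\int_x^y a (u')^2\,ds\big)^{1/2}$, and since $a \in L^\infty$ (strong degeneracy) or at least $a \in L^1$ and $\sqrt{a}u' \in L^2$, this tends to $0$ as $x,y \to x_0^+$; hence $au$ has a limit at $x_0$ from the right (Cauchy criterion), and likewise from the left. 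For the term $\int_x^y a' u\,ds$, I would use the structural hypothesis $(x-x_0)a' \le K_2 a$ together with $a \in W^{1,\infty}$ to bound $|a'(s)|$ against $a(s)/|s-x_0|$ up to constants, and then combine with $u \in L^2$; here one must be a little careful because $a'$ need not have a sign, but the upper bound on $(x-x_0)a'$ plus $a \ge 0$ and $a(x_0)=0$ forces the growth of $a$ near $x_0$ to be at most like $|x-x_0|^{K_2}$ with $K_2 < 2$, which is exactly what makes $a'/\sqrt{a}$ or the relevant combination integrable against an $L^2$ function. Once the one-sided limits $\ell_\pm := \lim_{x\to x_0^\pm}(au)(x)$ are shown to exist, I argue they must both equal $0$: if, say, $\ell_+ \ne 0$, then $a(x)u(x) \approx \ell_+$ near $x_0^+$, so $|u(x)| \approx |\ell_+|/a(x) \gtrsim |x-x_0|^{-K_2}$; squaring and integrating, since $2K_2$ could be large this forces $u \notin L^2$ near $x_0$ unless one is more careful — more precisely, one uses that $a(x)u(x)\to \ell_+\ne 0$ implies $u\notin L^2$ because $1/a$ fails to be square-integrable near $x_0$ (as $a(x_0)=0$ and $a$ is bounded, $1/a$ is not even in $L^1$ near $x_0$ in the relevant regime), contradicting $u \in L^2(0,1)$. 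Hence $\ell_\pm = 0$, giving continuity of $au$ at $x_0$ with $(au)(x_0)=0$, so $u \in X$.

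The main obstacle I anticipate is the handling of the term $\int a' u$: unlike $a$ itself, $a'$ is only controlled by the one-sided inequality $(x-x_0)a' \le K_2 a$, which bounds $a'$ from above but not from below, so I cannot simply say $|a'| \le K_2 a/|x-x_0|$. The correct route is first to integrate the differential inequality to deduce the pointwise growth bound $a(x) \le C|x-x_0|^{K_2}$ near $x_0$ (this is a standard consequence: write $(a |x-x_0|^{-K_2})' \le 0$ on each side of $x_0$), and then — since $a\in W^{1,\infty}$ so $a'\in L^\infty$ — split the integral: near $x_0$ use the growth bound and the $W^{1,\infty}$ regularity of $a$ directly, away from $x_0$ everything is harmless because $a$ is bounded below by a positive constant. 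Alternatively, and perhaps more cleanly, one avoids differentiating the product and instead works with $v := au$, noting $v' = a'u + au' \in L^1_{\mathrm{loc}}([0,1]\setminus\{x_0\})$ with the above integrability near $x_0$, so $v$ is (locally) absolutely continuous up to $x_0$ from each side; this is the approach I would actually write up. The constant $K_2 < 2$ is used precisely to ensure $1/a \notin L^1$ near $x_0$, which is what kills the boundary terms; this is why the analogous statement would need adjustment if $K_2 \ge 2$, consistent with the well-posedness dichotomy mentioned in the introduction.
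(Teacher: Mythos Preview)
Your proposal is correct and follows essentially the same route as the paper: show that $(au)' = a'u + au'$ is integrable so that $au$ has one-sided limits at $x_0$, and then argue that a nonzero limit would force $u\notin L^2$ via the non-integrability of $1/a$ near $x_0$. The paper's argument is simply the streamlined version of what you eventually call the ``alternative'' approach: since in the strongly degenerate case $a\in W^{1,\infty}(0,1)$, one has $a'\in L^\infty$ and $a\in L^\infty$, hence $a'u\in L^2$ (as $u\in L^2$) and $au' = \sqrt{a}\,\sqrt{a}u'\in L^2$ (as $\sqrt{a}\in L^\infty$ and $\sqrt{a}u'\in L^2$), so $(au)'\in L^2(0,1)$ outright and the existence of the limits is immediate; your detour through the structural inequality $(x-x_0)a'\le K_2 a$ to control $\int a'u$ is unnecessary for this step. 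That inequality enters only indirectly, through its consequence $1/a\notin L^1$ near $x_0$, which is exactly what you use (correctly) to force $\ell_\pm=0$.
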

\begin{proof} Obviously, $X \subseteq H^1_a$.
Now we take $u \in
H^1_a$ and we prove that $au$ is continuous at $x_0$ and $(au)(x_0)=0$, that is $u \in X$. Toward
this end, observe that  since $a \in W^{1, \infty}(0,1)$, $(au)' = a'
u+ au' \in L^2(0,1)$.  Thus, for $x <x_0$, one has
$
au(x)= (au)(0) + \int_0^x(au)'(t)dt$
(observe that $(au)(0) \in \R$).
 This
implies that there exists $\lim_{x \rightarrow x_0^-}(au)(x)= (au)(x_0)
= (au)(0)+\int_0^{x_0}(au)'(t)dt= L \in \R$.
 As in \cite[Proposition 2.3]{fggr}, one can prove that $L=0$. Analogously, $\lim_{x \rightarrow x_0^+}(au)(x)=(au)(x_0)
=0$. Thus
$(au)(x_0)=0$.
\end{proof}

Using the previous result, one can prove the following
characterization:
\begin{Proposition}\label{domain}
Let
\[
\begin{aligned} D:=\{ u \in L^2(0,1)\,| \ &u \text{ locally
absolutely continuous in } [0,1]\setminus \{x_0\},  au \in
H^1(0,1), \\ & a u' \in H^1(0,1),  au \text{ is continuous at } x_0 \;\\ &\text{and } (au)(x_0)=(au')(x_0)=u'(0)=u'(1)=0
\}.
\end{aligned}
\]
Then
$
 D(A_1)=D.
$
\end{Proposition}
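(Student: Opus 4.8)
\emph{Overview of the approach.} I would prove $D(A_1)=D$ by a double inclusion, after first isolating the single condition in the description of $D$ that is not already visible in the abstract definition of $D(A_1)$. By Proposition \ref{characterization} we have $H^1_a(0,1)=X$, so for $u\in L^2(0,1)$ membership in $H^1_a(0,1)$ is the same as: $u$ locally absolutely continuous in $[0,1]\setminus\{x_0\}$, $\sqrt a\,u'\in L^2(0,1)$, $au$ continuous at $x_0$ and $(au)(x_0)=0$; moreover, since $a\in W^{1,\infty}(0,1)$, for such $u$ one has $(au)'=a'u+au'\in L^2(0,1)$, so $au\in H^1(0,1)$ automatically. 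Thus, modulo Proposition \ref{characterization}, the conditions defining $D(A_1)$ (namely $u\in H^1_a(0,1)$, $au'\in H^1(0,1)$, $u'(0)=u'(1)=0$) and those defining $D$ differ only in the requirement $(au')(x_0)=0$: proving it for every $u\in D(A_1)$ gives one inclusion, and checking that, together with $au'\in H^1(0,1)$, it implies $\sqrt a\,u'\in L^2(0,1)$ gives the other.

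\emph{The inclusion $D(A_1)\subseteq D$ and the key vanishing.} Let $u\in D(A_1)$. Then $u\in H^2_a(0,1)\subseteq H^1_a(0,1)=X$, which yields all the conditions of $D$ except possibly $(au')(x_0)=0$ (with $au\in H^1(0,1)$ coming for free as noted above, $au'\in H^1(0,1)$ from the definition of $H^2_a(0,1)$, and $u'(0)=u'(1)=0$ from the definition of $D(A_1)$). To obtain $(au')(x_0)=0$ I would argue by contradiction: $au'\in H^1(0,1)$ has a continuous representative on $[0,1]$, so if $L:=(au')(x_0)\neq0$ then $|au'(x)|\ge|L|/2$ on a one-sided neighbourhood $I$ of $x_0$; since $a(x_0)=0$ and $a\in W^{1,\infty}(0,1)$ we have $a(x)=\big|\int_{x_0}^{x}a'(s)\,ds\big|\le\|a'\|_{L^\infty(0,1)}|x-x_0|$ on $I$, so that there
\[
a(x)|u'(x)|^2=\frac{(au'(x))^2}{a(x)}\ge\frac{L^2}{4\|a'\|_{L^\infty(0,1)}}\cdot\frac{1}{|x-x_0|},
\]
which is not integrable near $x_0$; this contradicts $\sqrt a\,u'\in L^2(0,1)$, forcing $L=0$ and hence $u\in D$.

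\emph{The inclusion $D\subseteq D(A_1)$.} Let $u\in D$. By Proposition \ref{characterization} it suffices to check $\sqrt a\,u'\in L^2(0,1)$, for then $u\in X=H^1_a(0,1)$, whence $u\in H^2_a(0,1)$ (as $au'\in H^1(0,1)$) and finally $u\in D(A_1)$ (as $u'(0)=u'(1)=0$). From $au'\in H^1(0,1)$ and $(au')(x_0)=0$ one gets $|au'(x)|=\big|\int_{x_0}^x(au')'(s)\,ds\big|\le|x-x_0|^{1/2}\|(au')'\|_{L^2(0,1)}$; combining this with the lower bound $a(x)\ge C|x-x_0|^{K_2}$ near $x_0$ — which follows by integrating the degeneracy inequality $(x-x_0)a'\le K_2a$, $K_2\in[1,2)$ — gives
\[
\frac{(au'(x))^2}{a(x)}\le\frac{|x-x_0|}{a(x)}\|(au')'\|_{L^2(0,1)}^2\le\frac{1}{C}|x-x_0|^{1-K_2}\|(au')'\|_{L^2(0,1)}^2,
\]
which is integrable near $x_0$ because $1-K_2>-1$; away from $x_0$ the coefficient $a$ is bounded below, so $\sqrt a\,u'=(au')/\sqrt a\in L^2(0,1)$, as desired.

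\emph{Main obstacle.} The delicate point is the vanishing $(au')(x_0)=0$ for $u\in D(A_1)$. What makes it work is precisely the \emph{strong} degeneracy assumption $a\in W^{1,\infty}(0,1)$ of Definition \ref{def2}: it forces $a$ to vanish at $x_0$ at most linearly, so that $1/a\notin L^1$ in any neighbourhood of $x_0$, and this is exactly the mechanism turning $\sqrt a\,u'\in L^2(0,1)$ into an obstruction to $(au')(x_0)\neq0$. For a weakly degenerate coefficient $a\in W^{1,1}(0,1)$ the function $1/a$ may instead be integrable near $x_0$, which is why that case was handled separately and required $u$ to be absolutely continuous across $x_0$; one should therefore not expect the argument above to carry over verbatim to the weakly degenerate setting.
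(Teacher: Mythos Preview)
Your proof is correct and follows the same route as the paper's, which is very terse and defers the details of both inclusions to \cite[Proposition 2.4]{fggr}; the only point the paper singles out explicitly is that ``the condition $\frac{1}{a}\not\in L^1(0,1)$ is crucial'', and this is exactly the mechanism you exploit in your contradiction argument for $(au')(x_0)=0$, since $a\in W^{1,\infty}(0,1)$ with $a(x_0)=0$ forces $a(x)\le\|a'\|_{L^\infty}|x-x_0|$. Your reverse inclusion, using the quantitative bounds $|au'(x)|\le|x-x_0|^{1/2}\|(au')'\|_{L^2}$ and $a(x)\ge C|x-x_0|^{K_2}$ (the latter being the content of \cite[Lemma 2.1]{fm}), is a faithful fleshing-out of the ``simple adaptation'' the paper alludes to.
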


\begin{proof} Let us prove that
$D=D(A_1)$.

{$\boldsymbol{D\subseteq D(A_1):}$} It is a simple adaptation of the proof of \cite[Proposition 2.4]{fggr} to which we refer. We underline the fact that here we
use the boundary conditions $u'(0)=u'(1)=0$.

{$\boldsymbol {D(A_1) \subseteq D:}$} As in the
proof of Proposition \ref{characterization}, we can prove that $au, (au)' \in L^2(0,1)$, thus $au \in H^1(0,1)$. Moreover, by Proposition
\ref{characterization},
$(au)(x_0)=0$. Thus, it is sufficient to prove that $(au')(x_0)=0$. This follows as in \cite[Proposition 2.4]{fggr}.
\end{proof}

We point out the fact that to prove the previous characterization the condition $\displaystyle\frac{1}{a}
\not \in L^1(0,1)$ is crucial.
Clearly this condition is not satisfied if the
operator is weakly degenerate. Indeed, in \cite[Lemma 2.1]{fm} it is proved that if the operator is weakly degenerate, then $\displaystyle \frac{1}{a}\in L^1(0,1)$; on the other hand, if it is strongly degenerate then $ \displaystyle \frac{1}{\sqrt{a}}\in L^1(0,1)$, while $\displaystyle \frac{1}{a}\not \in L^1(0,1)$.

\vspace{0,5cm}
Proceeding as in \cite[Lemma 2.6]{fggr} and using the previous characterization, we can prove the
formula of integration by parts \eqref{greenformula} also in the strongly degenerate case.
Thus, the analogue of Theorem \ref{prop} holds.

\section{Well posedness in the non divergence case}\label{non_divergence}
Now, we consider the operator $A_2u=au''$ in the weakly and in the strongly degenerate cases and, as in \cite[Chapter 2]{fm1}, we consider the following Hilbert
spaces:
\[L^2_{\frac{1}{a}}(0,1) :=\left\{ u \in L^2(0,1) \
\mid \int_0^1 \frac{u^2}{a} dx <\infty \right\},
\]
\[
H^1_{\frac{1}{a}}(0,1) :=L^2_{\frac{1}{a}}(0,1)\cap H^1(0,1) \quad \text{and} \quad H^2_{\frac{1}{a}}(0,1) :=\Big\{ u \in H^1_{\frac{1}{a}}(0,1) \,
\big| \, u'\in H^1(0,1)\Big\},\]
endowed with the associated norms
$ \displaystyle \|u\|_{L^2_{\frac{1}{a}}(0,1)}^2:= \int_0^1
\frac{u^2}{a} dx,$ $\forall\, u\in L^2_{\frac{1}{a}}(0,1),$
$
\|u\|^2_{H^1_{\frac{1}{a}}}:=\|u\|_{L^2_{\frac{1}{a}}(0,1)}^2 +
\|u'\|^2_{L^2(0,1)},$ $\forall\, u\in H^1_{\frac{1}{a}}(0,1)$
 and
$
\|u\|_{H^2_{\frac{1}{a}}(0,1)}^2 :=\|u\|_{H^1_{\frac{1}{a}}(0,1)}^2 +
\|au''\|^2_{L^2_{\frac{1}{a}}(0,1)},$ $\forall\,u\in
H^2_{\frac{1}{a}}(0,1),
$ respectively.
Indeed, it is a trivial fact that, if $u'\in H^1(0,1)$, then $au''
\in L^2_{\frac{1}{a}}(0,1)$, so that the norm for
$H^2_{\frac{1}{a}}(0,1)$ is well defined and we can also write in a
more appealing way
\[
H^2_{\frac{1}{a}}(0,1) :=\Big\{ u \in H^1_{\frac{1}{a}}(0,1) \,
\big| \, u'\in H^1(0,1) \mbox{ and }au'' \in
L^2_{\frac{1}{a}}(0,1)\Big\}.
\]
Using the
previous spaces, we define the operator $A_2$ by
$
 D(A_2)= \{ u \in H^2_{\frac{1}{a}}(0,1)| \, u'(0)=u'(1)=0\}$
 and, for any
$u \in D(A_2), $
$A_2u:=au''.
$

Proceeding as in \cite[Corollary 3.1]{fggr}, one can prove the following characterization:
\begin{Corollary}
If the operator is weakly degenerate, then the spaces $H^1_{\frac{1}{a}}(0,1)$ and $H^1(0,1)$ coincide
algebraically. Moreover the two norms are equivalent.
\end{Corollary}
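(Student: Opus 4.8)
The plan is to prove the two set-theoretic inclusions together with the corresponding norm bounds. Since $H^1_{\frac{1}{a}}(0,1)$ is by definition the subspace $L^2_{\frac{1}{a}}(0,1)\cap H^1(0,1)$, the inclusion $H^1_{\frac{1}{a}}(0,1)\subseteq H^1(0,1)$ is automatic, and only the reverse inclusion carries content.

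First I would record the easy estimate. Since the operator is weakly degenerate, $a\in W^{1,1}(0,1)$, hence $a$ is continuous on $[0,1]$ and in particular bounded, say $0\le a\le \|a\|_{L^\infty(0,1)}$. Therefore for every $u\in H^1_{\frac{1}{a}}(0,1)$ one has $\|u\|^2_{L^2(0,1)}\le \|a\|_{L^\infty(0,1)}\int_0^1\frac{u^2}{a}\,dx$, so that $\|u\|^2_{H^1(0,1)}\le \max\{\|a\|_{L^\infty(0,1)},1\}\,\|u\|^2_{H^1_{\frac{1}{a}}(0,1)}$. This gives the continuous embedding $H^1_{\frac{1}{a}}(0,1)\hookrightarrow H^1(0,1)$.

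The nontrivial direction is $H^1(0,1)\subseteq H^1_{\frac{1}{a}}(0,1)$. The key ingredient, proceeding as in \cite[Corollary 3.1]{fggr}, is that in the weakly degenerate case $\frac1a\in L^1(0,1)$: this is the content of \cite[Lemma 2.1]{fm}, and can also be obtained directly from the structural condition $(x-x_0)a'\le K_1 a$ with $K_1\in(0,1)$ by a Gronwall-type argument, which yields $a(x)\ge c\,|x-x_0|^{K_1}$ near $x_0$, whence $\frac1a$ is integrable because $K_1<1$. Now take $u\in H^1(0,1)$. By the one-dimensional Sobolev embedding $H^1(0,1)\hookrightarrow C([0,1])$ there is a universal constant $C$ with $\|u\|^2_{L^\infty(0,1)}\le C\|u\|^2_{H^1(0,1)}$ (for instance, from $u(x)^2=u(y)^2+2\int_y^x uu'\,ds$ integrated in $y$ over $(0,1)$). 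Hence
\[
\int_0^1\frac{u^2}{a}\,dx\le \|u\|^2_{L^\infty(0,1)}\Big\|\frac1a\Big\|_{L^1(0,1)}\le C\Big\|\frac1a\Big\|_{L^1(0,1)}\|u\|^2_{H^1(0,1)}<\infty,
\]
so $u\in L^2_{\frac{1}{a}}(0,1)\cap H^1(0,1)=H^1_{\frac{1}{a}}(0,1)$, and adding $\|u'\|^2_{L^2(0,1)}$ to both sides gives $\|u\|^2_{H^1_{\frac{1}{a}}(0,1)}\le \big(C\|\frac1a\|_{L^1(0,1)}+1\big)\|u\|^2_{H^1(0,1)}$.

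Combining the two estimates shows that $H^1_{\frac{1}{a}}(0,1)$ and $H^1(0,1)$ coincide as sets and that their norms are equivalent. The only real obstacle is the integrability $\frac1a\in L^1(0,1)$, which is precisely where the weak-degeneracy hypothesis $K_1<1$ enters and which fails in the strongly degenerate case; everything else is routine.
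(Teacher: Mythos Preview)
Your argument is correct and is essentially the standard proof the paper has in mind: the paper does not prove the Corollary itself but refers to \cite[Corollary 3.1]{fggr}, whose proof proceeds exactly as you do, using $\frac{1}{a}\in L^1(0,1)$ from the weak-degeneracy assumption together with the embedding $H^1(0,1)\hookrightarrow L^\infty(0,1)$ for the nontrivial inclusion, and the boundedness of $a$ for the trivial one.
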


In every case
$C_c^\infty(0,1)$ is contained in $H^1_{\frac{1}{a}}(0,1)$.

As for the divergence form,
a crucial tool is the following
formula of integration by parts:
\begin{Lemma}\label{green1}
For all $(u,v)\in D(A_2)\times
H^1_{\frac{1}{a}}(0,1)$ one has
\begin{equation}\label{greenformula1}
\int_0^1u'' v\, dx= - \int_0^1 u'v'\, dx.
\end{equation}
\end{Lemma}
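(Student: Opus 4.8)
The plan is to reduce \eqref{greenformula1} to the fact that the product $u'v$ belongs to $W^{1,1}(0,1)$: once this is known, $u'v$ is absolutely continuous on $[0,1]$, the fundamental theorem of calculus applies, and the boundary term $\big[u'v\big]_{x=0}^{x=1}$ vanishes because $u'(0)=u'(1)=0$ is built into $D(A_2)$.

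First I would check that both integrals in \eqref{greenformula1} make sense. For $u\in D(A_2)$ the condition $au''\in L^2_{\frac1a}(0,1)$ means precisely $\sqrt{a}\,u''\in L^2(0,1)$; since $v\in L^2_{\frac1a}(0,1)$ gives $v/\sqrt{a}\in L^2(0,1)$, we get $u''v=(\sqrt{a}\,u'')(v/\sqrt{a})\in L^1(0,1)$ by the Cauchy--Schwarz inequality. Moreover $u'\in H^1(0,1)\hookrightarrow C[0,1]$ is bounded and $v'\in L^2(0,1)\subset L^1(0,1)$, so $u'v'\in L^1(0,1)$ as well.

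Next, $u'\in H^1(0,1)$ and $v\in H^1_{\frac1a}(0,1)\subseteq H^1(0,1)$, so both factors are absolutely continuous on $[0,1]$; hence $u'v\in W^{1,1}(0,1)$ with $(u'v)'=u''v+u'v'$ a.e., the right-hand side being in $L^1(0,1)$ by the previous step. Therefore
\[
\int_0^1\big(u''v+u'v'\big)\,dx=\big[u'v\big]_{x=0}^{x=1}=u'(1)v(1)-u'(0)v(0)=0,
\]
because $u'(0)=u'(1)=0$ for every $u\in D(A_2)$ and $v$ is bounded on $[0,1]$; rearranging gives \eqref{greenformula1}. The argument is uniform in the weakly and strongly degenerate cases, since in both of them $v\in H^1_{\frac1a}(0,1)$ is by definition an $H^1$ function.

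I expect the only genuinely delicate point to be the integrability of $u''v$ near $x_0$: unlike the non-degenerate case, $u''$ alone need not belong to $L^2(0,1)$, and one must exploit the weighted structure of $D(A_2)$ (that is, $\sqrt a\,u''\in L^2$) together with $v\in L^2_{\frac1a}(0,1)$ to absorb the degeneracy. Everything else is the classical integration-by-parts computation, exactly as for the divergence form in Lemma \ref{green} (see also \cite{fggr}, \cite{fm1}), with the Neumann conditions ensuring that the boundary contribution vanishes without any extra requirement on $u$ or $v$ at the degeneracy point.
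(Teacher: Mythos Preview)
Your proof is correct and follows exactly the paper's (one--line) argument: since $u'\in H^1(0,1)$ and $v\in H^1(0,1)$, the product $u'v$ is absolutely continuous, $(u'v)'=u''v+u'v'$, and the boundary term vanishes because $u'(0)=u'(1)=0$.

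One remark: your ``genuinely delicate point'' is not actually delicate. By the very definition of $H^2_{\frac1a}(0,1)$ you have $u'\in H^1(0,1)$, hence $u''\in L^2(0,1)$ outright; the weighted splitting $u''v=(\sqrt a\,u'')(v/\sqrt a)$ is therefore unnecessary (though of course valid). The paper's observation that the lemma is ``trivial'' rests precisely on this: both $u'$ and $v$ lie in $H^1(0,1)$, so only the classical integration by parts is needed.
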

\begin{proof}
It is trivial, since $u'(0)\!\!=\!\!u'(1)\!\!=\!\!0$ and both $u'\!\!\in\! H^1(0,1)$ and $v\!\in\! H^1(0,1)$.
\end{proof}

We also recall the following definition:
\begin{Definition}
Assume that $u_0 \in L^2_{\frac{1}{a}}(0,1)$ and $h\in
L^2_{\frac{1}{a}}(Q_T):= L^2(0,T; L^2_{\frac{1}{a}}(0,1))$. A function $u$ is said to be a weak
solution of \eqref{linear} with $A= A_2$ if
\[u \in C([0, T]; L^2_{\frac{1}{a}}(0,1)) \cap L^2(0, T;
H^1_{\frac{1}{a}}(0,1))\] and satisfies
\[
\begin{aligned}
&\int_0^1 \frac{ u(T,x)\varphi(T,x)}{a(x)} dx - \int_0^1
\frac{u_0(x) \varphi(0,x)}{a(x)} dx -\int_{Q_T}
\frac{\varphi_t (t,x)u(t,x)}{a(x)}dxdt =
\\&- \int_{Q_T} u_x(t,x)
\varphi_x(t,x) dxdt + \int_{Q_T} h(t,x) \frac{\varphi(t,x)
}{a(x)}dx dt
\end{aligned}
\]
for all $\varphi \in H^1(0, T; L^2_{\frac{1}{a}}(0,1)) \cap L^2(0,
T; H^1_{\frac{1}{a}}(0,1))$.
\end{Definition}
As a consequence of the previous lemma one has the next proposition,
whose proof is similar to the proof of Theorem \ref{prop}.
\begin{Theorem}\label{theorem_nondivergence}
The operator $A_2:D(A_2)\to L^2_{\frac{1}{a}}(0, 1)$ is self--adjoint,
nonpositive on $L^2_{\frac{1}{a}}(0,1)$ and it generates an analytic
contraction semigroup of angle $\pi/2$. Therefore, for all $h
\in L^2_{{\frac{1}{a}}}(Q_T)$ and $u_0 \in
L^2_{{\frac{1}{a}}}(0,1)$, there exists a unique solution
\[u \in
C\big([0,T]; L^2_{\frac{1}{a}}(0,1)\big)\cap L^2 \big(0,T;
H^1_{\frac{1}{a}}(0,1)\big)
\] of \eqref{linear} such that
\begin{equation}\label{stima2}
\sup_{t \in [0,T]}
\|u(t)\|^2_{L^2_{\frac{1}{a}}(0,1)}+\int_0^T\|u(t)\|^2_{H^1_{\frac{1}{a}}
(0,1)} dt \le
C_T\left(\|u_0\|^2_{L^2_{\frac{1}{a}}(0,1)}+\|h\|^2_{L^2_{\frac{1}{a}}(Q_T)}\right),
\end{equation}
for some positive constant $C_T$. Moreover, if $h \in W^{1,1}(0,T; L^2_{\frac{1}{a}}(0,1))$ and $u_0 \in
H^1_{\frac{1}{a}}(0,1)$, then
\begin{equation}\label{regularity1'}
u\in C^1\big([0,T]; L^2_{\frac{1}{a}}(0,1)\big) \cap C\big([0,T];
D(A_2)\big),
\end{equation}
and there exists a positive constant $C$ such that
\begin{equation}\label{stima3}
\begin{aligned}
\sup_{t \in [0,T]}\left(\|u(t)\|^2_{H^1_{\frac{1}{a}}(0,1)}
\right)&+ \int_0^{T}
\left(\left\|u_t\right\|^2_{L^2_{\frac{1}{a}}(0,1)} +
\left\|au_{xx}\right\|^2_{L^2_{\frac{1}{a}}(0,1)}\right)dt\\&\le C
\left(\|u_0\|^2_{H^1_{\frac{1}{a}}(0,1)} +
\|h\|^2_{L^2(Q_T)}\right).
\end{aligned}\end{equation}
\end{Theorem}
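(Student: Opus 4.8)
The strategy is to reproduce, \emph{mutatis mutandis}, the proof of Theorem~\ref{prop}, replacing $L^2(0,1)$ by $L^2_{\frac{1}{a}}(0,1)$, the space $H^1_a(0,1)$ by $H^1_{\frac{1}{a}}(0,1)$ and the integration by parts formula \eqref{greenformula} by \eqref{greenformula1}. First I would observe that $D(A_2)$ is dense in $L^2_{\frac{1}{a}}(0,1)$ (it contains $C_c^\infty(0,1)$) and that $A_2$ is symmetric and nonpositive: indeed, for $u,v\in D(A_2)$, Lemma~\ref{green1} gives
\[
\int_0^1\frac{(A_2u)\,v}{a}\,dx=\int_0^1 u''v\,dx=-\int_0^1 u'v'\,dx,
\]
which is symmetric in $(u,v)$ and, for $v=u$, equals $-\|u'\|^2_{L^2(0,1)}\le 0$.

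Next I would prove that $I-A_2$ is surjective onto $L^2_{\frac{1}{a}}(0,1)$ by the Lax--Milgram Lemma, exactly as for $A_1$. On the Hilbert space $H^1_{\frac{1}{a}}(0,1)$ endowed with the inner product $(u,v)_1:=\int_0^1\frac{uv}{a}\,dx+\int_0^1 u'v'\,dx$ one has the continuous embeddings $H^1_{\frac{1}{a}}(0,1)\hookrightarrow L^2_{\frac{1}{a}}(0,1)\hookrightarrow\big(H^1_{\frac{1}{a}}(0,1)\big)^*$, and for $f\in L^2_{\frac{1}{a}}(0,1)$ the functional $v\mapsto\int_0^1\frac{fv}{a}\,dx$ is bounded on $H^1_{\frac{1}{a}}(0,1)$; hence there is a unique $u\in H^1_{\frac{1}{a}}(0,1)$ with $(u,v)_1=\int_0^1\frac{fv}{a}\,dx$ for every $v$. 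Testing this identity against $v\in C_c^\infty(0,1)$ gives $\int_0^1 u'v'\,dx=\int_0^1\frac{(f-u)v}{a}\,dx$, so that the distributional derivative of $u'$ is $\frac{u-f}{a}$; using the integrability properties of $1/a$ and $1/\sqrt a$ recalled after Proposition~\ref{domain}, together with the arguments of \cite{fm1}, one deduces $u'\in H^1(0,1)$, i.e. $u\in H^2_{\frac{1}{a}}(0,1)$, and $au''=u-f$ a.e. in $(0,1)$. A Gauss--Green identity as in \cite[Proposition VIII.16]{b} then shows that the boundary terms $[u'v]_{x=0}^{x=1}$ vanish for all $v\in H^1_{\frac{1}{a}}(0,1)$; since $v(0),v(1)$ are arbitrary and $a(0),a(1)>0$, this forces $u'(0)=u'(1)=0$, so $u\in D(A_2)$ and $u-A_2u=f$. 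Being a nonpositive self-adjoint operator on a Hilbert space, $A_2$ generates a cosine family and an analytic contraction semigroup of angle $\frac{\pi}{2}$ (see \cite{fggr}); existence, uniqueness and the regularity statement \eqref{regularity1'} then follow from \cite[Proposition 3.3]{dp} and \cite[Lemma 4.1.5 and Proposition 4.1.6]{ch}.

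It remains to establish \eqref{stima2}--\eqref{stima3}, which I would do by the same energy estimates as in Theorem~\ref{prop}, first for $u_0\in D(A_2)$ and then by density of $D(A_2)$ in $L^2_{\frac{1}{a}}(0,1)$, respectively in $H^1_{\frac{1}{a}}(0,1)$. Taking the $L^2_{\frac{1}{a}}(0,1)$-scalar product of the equation $u_t=au_{xx}+h$ with $u$ and using Lemma~\ref{green1} and the Cauchy--Schwarz inequality gives
\[
\frac{1}{2}\frac{d}{dt}\|u(t)\|^2_{L^2_{\frac{1}{a}}(0,1)}+\|u_x(t)\|^2_{L^2(0,1)}\le\frac{1}{2}\|u(t)\|^2_{L^2_{\frac{1}{a}}(0,1)}+\frac{1}{2}\|h(t)\|^2_{L^2_{\frac{1}{a}}(0,1)},
\]
whence \eqref{stima2}. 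Multiplying the equation by $-u_{xx}$, integrating on $(0,1)$ (using $u_x(t,0)=u_x(t,1)=0$) and applying Young's inequality in the form $\big|\int_0^1 h\,u_{xx}\,dx\big|\le\frac{1}{2}\|h\|^2_{L^2_{\frac{1}{a}}(0,1)}+\frac{1}{2}\|a u_{xx}\|^2_{L^2_{\frac{1}{a}}(0,1)}$ yields
\[
\frac{d}{dt}\|u_x(t)\|^2_{L^2(0,1)}+\|a u_{xx}(t)\|^2_{L^2_{\frac{1}{a}}(0,1)}\le\|h(t)\|^2_{L^2_{\frac{1}{a}}(0,1)};
\]
integrating in time and then using $u_t=au_{xx}+h$ to bound $\int_0^T\|u_t\|^2_{L^2_{\frac{1}{a}}(0,1)}$, one obtains \eqref{stima3}. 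The main obstacle is the surjectivity step, more precisely the verification that the Lax--Milgram solution $u$ actually lies in $H^2_{\frac{1}{a}}(0,1)$ and satisfies the Neumann conditions: one must control $\frac{u-f}{a}$ near the degeneracy point $x_0$ and justify the Gauss--Green identity in the weighted framework, where in the strongly degenerate case functions need not be absolutely continuous across $x_0$. This is exactly where the non-divergence structure differs from the divergence one, and it is handled through the integrability of $1/a$ and $1/\sqrt a$ and the domain characterization used in \cite{fm1}; once this is in place, the semigroup generation and the energy estimates are routine adaptations of the arguments for $A_1$.
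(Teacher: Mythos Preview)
Your proof follows essentially the same route as the paper's: density of $D(A_2)$, symmetry and nonpositivity via Lemma~\ref{green1}, surjectivity of $I-A_2$ by Lax--Milgram on $H^1_{\frac{1}{a}}(0,1)$ with the inner product $(u,v)_1=\int_0^1\big(\frac{uv}{a}+u'v'\big)\,dx$, recovery of the Neumann conditions through the Gauss--Green identity, and then the standard semigroup and energy arguments. The only difference is organizational: the paper handles the strongly degenerate case by referring directly to \cite[Theorem~2.2]{fm1} and carries out the Lax--Milgram details only in the weakly degenerate case, whereas you treat both cases together and write out the energy estimates \eqref{stima2}--\eqref{stima3} explicitly instead of deferring them to \cite{fm1}.
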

\begin{proof}In the (SD) case for the existence and the regularity parts, we can proceed as in \cite[Theorem 2.2]{fm1}, to which we refer.
In the (WD) case, we proceed as in Theorem \ref{prop}: first, observe that $D(A_2)$ is dense in $L^2_{\frac{1}{a}}(0,1)$. Then, using Lemma \ref{green1}, one has that
$A_2$ is symmetric and nonpositive.
Finally, let us show that $ I-A_2$ is surjective. First of all,
observe that $H^1_{\frac{1}{a}}(0,1)$ is equipped with the natural
inner product
$
(u, v)_1:=\displaystyle \int_0^1 \left(\frac{u v}{a} + u'v'\right) dx
$
for any $u, v \in H^1_{\frac{1}{a}}(0,1)$. Moreover, it is clear
that
$
H^1_{\frac{1}{a}}(0,1) \hookrightarrow L^2_{\frac{1}{a}}(0,1)
\hookrightarrow (H^1_{\frac{1}{a}}(0,1))^*,
$
where $(H^1_{\frac{1}{a}}(0,1))^*$ is the dual space of
$H^1_{\frac{1}{a}}(0,1)$ with respect to $L^2_{\frac{1}{a}}(0,1)$.
Now, if $f \in L^2_{\frac{1}{a}}(0,1)$, consider the functional $F:
H^1_{\frac{1}{a}}(0,1) \to \R$ defined as $ F(v) := \displaystyle\int_0^1
\frac{fv}{a} dx$. Clearly it belongs to $(H^1_{\frac{1}{a}}(0,1))^*$. As a
consequence, by the Lax--Milgram Lemma, there exists a unique $u \in
H^1_{\frac{1}{a}}(0,1)$ such that for all $v \in
H^1_{\frac{1}{a}}(0,1)$,
$
(u, v)_1= \ds \int_0^1 \frac{f v}{a} dx.
$
In particular, since $C^\infty_c(0,1) \subset
H^1_{\frac{1}{a}}(0,1)$, the previous equality holds for all $v \in
C^\infty_c(0,1)$, i.e.
$
\displaystyle \int_0^1u'v'dx = \int_0^1 \frac{(f-u)}{a}v\, dx,$ for
every $v \in C^\infty_c(0,1).
$
Thus, the distributional derivative of $u'$ is a function in
$L^2_{\frac{1}{a}}(0,1)\subset L^2(0,1)$, hence it is easy to see
that $au''\in L^2_{\frac{1}{a}}(0,1)$.
Thus $u \in H^2_{\frac{1}{a}}(0,1)$.
Proceeding as in Theorem \ref{prop}, one can prove that $u'(0)=u'(1)=0$. In fact, by the Gauss Green Identity and $
(u, v)_1= \displaystyle\int_0^1 \frac{f v}{a} dx$, one has that for all
$v \in H^1_{\frac{1}{a}}(0,1)$,
\begin{equation}\label{b1*}
\int_0^1 u'' v dx = [u' v]_{x=0}^{x=1} - \int_0^1 u'v' dx =  [u' v]_{x=0}^{x=1} - \int_0^1 \frac{(f-u)}{a} vdx.
\end{equation}
In particular, the previous equality holds for all $v \in C_c^\infty(0,1)$. Thus, $[u' v]_{x=0}^{x=1}=0$ for all $v \in C_c^\infty(0,1)$ and
$
u''= \displaystyle \frac{(u-f)}{a}\; \text{ a. e. in } \;(0,1).
$ Coming back to \eqref{b1*}, it becomes
$
[u' v]_{x=0}^{x=1}=0, \; \text{for all}\; v \in H^1_{\frac{1}{a}}(0,1).
$
Again, one can conclude that
$
u'(0)=u'(1)=0.
$
Thus $u \in D(A_2)$, and by $(u, v)_1= \displaystyle\int_0^1 \frac{f v}{a} dx$ and Lemma \ref{green1}, we
have
$
\displaystyle\int_0^1\left(\frac{u-f }{a}-u''\right)v dx =0.
$
Consequently,
$
u \in D(A_2)$ and $u- A_2u=f.
$
As in Theorem \ref{prop}, one can conclude
 that $(A_2, D(A_2))$ generates a cosine family
and an analytic contractive semigroup of angle $\ds\frac{\pi}{2}$ on
$L^2_{\frac{1}{a}}(0,1)$.
The rest of the theorem follows as in \cite[Theorem 2.2]{fm1}.
\end{proof}

\subsection{Characterizations in the strongly degenerate case}
In this subsection we will concentrate, as in \cite{fggr}, on the strongly degenerate case and  we will characterize the
 spaces
$H^1_{\frac{1}{a}}(0,1)$ and $H^2_{\frac{1}{a}}(0,1)$. We point out
the fact that in non divergence form, the characterization of the
domain of the operator is not important to prove the formula of integration by parts
as in divergence form.

First of all observe that, as in \cite[Lemma 2.1]{fm}, one can prove that
$\displaystyle\frac{|x-x_0|^2}{a(x)} \le C,$
for all $x
\in [0,1]\setminus\{x_0\}$,
where $C:= \displaystyle \max\left\{\frac{(x_0)^2}{a(0)}, \frac{(1-x_0)^2}{a(1)} \right\}$.
The following characterization holds:
\begin{Proposition}\label{domain3} Let
$
X:=\{ u \in H^1_{\frac{1}{a}}(0,1) \ \mid u(x_0)=0
\}.
$
If $A_2$ is strongly degenerate, then
$
H^1_{\frac{1}{a}}(0,1)=X
$
and, for all $u\in X$, $\|u\|_{H^1_{\frac{1}{a}(0,1)}}$ is
equivalent to
$\left(\int_0^1(u')^2dx\right)^{\frac{1}{2}}$.
\end{Proposition}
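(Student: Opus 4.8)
The plan is to follow the corresponding argument in \cite{fggr}, the only specifically new ingredients being two elementary consequences of Definition \ref{def2}. Write $H:=H^1_{\frac1a}(0,1)$. The inclusion $X\subseteq H$ is immediate from the definition of $X$. For the reverse inclusion, take $u\in H$; since $H\subseteq H^1(0,1)$, the function $u$ is continuous on $[0,1]$, so $u(x_0)$ makes sense, and I would argue by contradiction. If $u(x_0)\neq0$, continuity furnishes $\delta>0$ and $c>0$ with $u(x)^2\ge c$ for $|x-x_0|<\delta$, whence
\[
\int_0^1\frac{u^2}{a}\,dx\ \ge\ c\int_{x_0-\delta}^{x_0+\delta}\frac{dx}{a(x)}\ =\ +\infty ,
\]
because $\frac1a\notin L^1(0,1)$ in the strongly degenerate case (this is recalled after Proposition \ref{domain} and proved in \cite[Lemma 2.1]{fm}; equivalently, $a\in W^{1,\infty}(0,1)$ with $a(x_0)=0$ gives $a(x)\le L|x-x_0|$, so $\frac1a\ge\frac1{L|x-x_0|}$ near $x_0$). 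This contradicts $u\in L^2_{\frac1a}(0,1)$, so $u(x_0)=0$ and $u\in X$; hence $H=X$.

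For the norm equivalence, one bound is trivial: $\int_0^1(u')^2\,dx\le\|u\|_H^2$. So everything reduces to the Hardy-type inequality $\int_0^1\frac{u^2}{a}\,dx\le C\int_0^1(u')^2\,dx$ for $u\in X$. Since $u(x_0)=0$, for $x>x_0$ I would write $u(x)=\int_{x_0}^x u'(t)\,dt$ and apply Cauchy--Schwarz to get $u(x)^2\le(x-x_0)\int_{x_0}^1(u')^2\,dt$, and symmetrically for $x<x_0$; therefore
\[
\int_0^1\frac{u^2}{a}\,dx\ \le\ \Big(\int_0^1(u')^2\,dt\Big)\Big(\int_{x_0}^1\frac{x-x_0}{a(x)}\,dx+\int_0^{x_0}\frac{x_0-x}{a(x)}\,dx\Big),
\]
and it remains to check the last two integrals are finite. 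Here I would use the monotonicity of $x\mapsto a(x)/|x-x_0|^{K_2}$, which comes from $(x-x_0)a'\le K_2a$: on $(x_0,1]$ one has $\frac{d}{dx}\big(\ln a-K_2\ln(x-x_0)\big)=\frac{(x-x_0)a'-K_2a}{(x-x_0)a}\le0$, while on $[0,x_0)$ the ratio $a(x)/(x_0-x)^{K_2}$ is non-decreasing. Consequently $a(x)\ge c_0|x-x_0|^{K_2}$ with $c_0:=\min\{a(0)x_0^{-K_2},\,a(1)(1-x_0)^{-K_2}\}>0$, so $\frac{|x-x_0|}{a(x)}\le c_0^{-1}|x-x_0|^{1-K_2}$, which is integrable near $x_0$ precisely because $K_2<2$. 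Adding $\int_0^1(u')^2\,dx$ to both sides gives $\|u\|_H^2\le (C+1)\int_0^1(u')^2\,dx$; and since $u'\equiv0$ together with $u(x_0)=0$ forces $u\equiv0$, the quantity $\big(\int_0^1(u')^2\,dx\big)^{1/2}$ is indeed a norm on $X$, equivalent to $\|\cdot\|_H$ on $X=H$.

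The only genuine obstacle is the finiteness of the two integrals $\int\frac{|x-x_0|}{a}\,dx$ near $x_0$: the pointwise estimate $\frac{|x-x_0|^2}{a(x)}\le C$ recorded just before the statement is \emph{not} enough for this (it only yields $\frac{|x-x_0|}{a}\le C/|x-x_0|$, which is not integrable), so one must genuinely exploit the sharper exponent $K_2<2$ via the monotonicity of $a(x)/|x-x_0|^{K_2}$. Everything else is routine.
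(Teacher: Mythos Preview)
Your proof is correct and follows precisely the route the paper indicates (an adaptation of \cite[Proposition 3.6]{fggr}): the contradiction via $\frac{1}{a}\notin L^1$ to force $u(x_0)=0$, and the Hardy-type inequality obtained from Cauchy--Schwarz together with the integrability of $|x-x_0|/a$ deduced from the monotonicity of $a(x)/|x-x_0|^{K_2}$ and $K_2<2$. Your closing observation that the cruder bound $|x-x_0|^2/a\le C$ is insufficient and that one must genuinely use $K_2<2$ is exactly the point; nothing is missing.
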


The proof of the previous proposition is a simple adaptation of the proof of \cite[Proposition 3.6]{fggr}, to which we refer.
 An
immediate consequence of Proposition \ref{domain3} is the following result.
\begin{Proposition}\label{domain4}
Let
\[
\begin{aligned}
D:=\{ u \in H^1_{\frac{1}{a}}(0,1) \ \mid  \;au'' \in
L^2_{\frac{1}{a}}(0,1),\; u' \in H^1(0,1) \text{ and }
u(x_0)=(au')(x_0)=0\}.
\end{aligned}
\]

 If $A_2$ is strongly degenerate, then
$
H^2_{\frac{1}{a}}(0,1)=D.
$
\end{Proposition}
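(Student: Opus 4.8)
The plan is to establish the set equality $H^2_{\frac{1}{a}}(0,1)=D$ by proving the two inclusions separately; both are short once Proposition \ref{domain3} is in hand, which is exactly why the statement is advertised as an immediate consequence of it. The inclusion $D\subseteq H^2_{\frac{1}{a}}(0,1)$ is essentially a tautology: by definition every $u\in D$ belongs to $H^1_{\frac{1}{a}}(0,1)$ and satisfies $u'\in H^1(0,1)$, and these are precisely the two conditions defining $H^2_{\frac{1}{a}}(0,1)$ (recalling the remark already made in this section that $u'\in H^1(0,1)$ automatically forces $au''\in L^2_{\frac{1}{a}}(0,1)$). The extra requirements $u(x_0)=(au')(x_0)=0$ only single out a subset, so nothing is lost.

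For the reverse inclusion $H^2_{\frac{1}{a}}(0,1)\subseteq D$, I would fix $u\in H^2_{\frac{1}{a}}(0,1)$ and check the two vanishing conditions. First, $u\in H^1_{\frac{1}{a}}(0,1)$, and since $A_2$ is strongly degenerate, Proposition \ref{domain3} gives $H^1_{\frac{1}{a}}(0,1)=X$, whence $u(x_0)=0$. Second, I would argue that $au'$ is continuous at $x_0$ with value $0$: indeed $u'\in H^1(0,1)\hookrightarrow C([0,1])$, so $u'$ is continuous and bounded on $[0,1]$, while $a\in W^{1,\infty}(0,1)\subset C([0,1])$ with $a(x_0)=0$; hence $a(x)u'(x)\to a(x_0)u'(x_0)=0$ as $x\to x_0$. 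Combining this with $u'\in H^1(0,1)$ and $au''\in L^2_{\frac{1}{a}}(0,1)$ (automatic, as noted) shows $u\in D$.

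I do not expect any genuine obstacle here beyond invoking Proposition \ref{domain3}; the only point that deserves a word of care is the behaviour of $au'$ at the degeneracy point. In the non divergence setting this is painless because $u'$ is genuinely continuous on the whole of $[0,1]$, so one reads off $(au')(x_0)=0$ directly from $a(x_0)=0$ — in contrast to the divergence case, where one controls $au'$ rather than $u'$ and must pass through an integral representation as in Proposition \ref{characterization}. Thus the proof reduces to these two elementary observations, which is why the result follows at once from the characterization of $H^1_{\frac{1}{a}}(0,1)$.
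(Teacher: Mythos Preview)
Your proof is correct and follows the same overall structure as the paper's: both handle $D\subseteq H^2_{\frac{1}{a}}(0,1)$ as a tautology, both invoke Proposition~\ref{domain3} for $u(x_0)=0$, and both then turn to $(au')(x_0)=0$.

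The one genuine difference is in that last step. The paper notes that $au'\in C[0,1]$ (from $u'\in H^1(0,1)$ and $a\in W^{1,\infty}(0,1)$), so the limit $L:=\lim_{x\to x_0}(au')(x)$ exists, and then appeals to the argument of \cite[Proposition~3.6]{fggr} to conclude $L=0$ --- an indirect route that presumably uses an integrability consideration (e.g.\ that $\sqrt{a}\,u'\in L^2$ together with $1/a\notin L^1$ near $x_0$ forces the limit to vanish). Your argument is more direct: since $u'$ itself lies in $H^1(0,1)\hookrightarrow C([0,1])$, the value $u'(x_0)$ is finite, and $a(x_0)=0$ gives $(au')(x_0)=0$ immediately by pointwise multiplication. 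This is shorter and avoids the external reference; the paper's route would still be needed in settings where one only controls $au'$ rather than $u'$ (as in the divergence case, Proposition~\ref{domain}), but here your observation that $u'$ is genuinely continuous makes the detour unnecessary.
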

\begin{proof}
Obviously, $D \subseteq H^2_{\frac{1}{a}}(0,1)$. Now, we take $u \in
H^2_{\frac{1}{a}}(0,1)$ and we prove that $u \in D$.  By Proposition
\ref{domain3}, $u(x_0)=0$. Thus, it is sufficient to prove that $(au')(x_0)=0$. Since $u' \in H^1(0,1)$ and $a
\in W^{1, \infty}(0,1)$, then $au' \in C[0,1]$ and $\sqrt{a} u' \in L^2(0,1)$. This implies that there exists
$\lim_{x \rightarrow x_0}(au')(x) = (au')(x_0)=L \in \R$. Proceeding as in the proof of \cite[Proposition 3.6]{fggr}, one can prove that $L=0$, that is
$(au')(x_0)=0$.
\end{proof}

\section{Carleman estimate for degenerate parabolic
problems: the divergence case}\label{Carleman estimate}

In this section we prove an interesting estimate of Carleman type for the
adjoint problem of \eqref{linear} in divergence form
\[
\begin{cases}
v_t + \left(av_x \right) _x =h, & (t,x) \in Q_T,\\
v_x(t,0)=v_x(t,1)=0, &  t \in (0,T),\\
v(T,x)= v_T(x)\in L^2(0,1),
\end{cases}
\]
where $T>0$ is given. As it is well known, to prove Carleman estimates the final datum is irrelevant, only the equation and the boundary conditions are important. For this reason we can consider only the problem
\begin{equation}\label{1}
\begin{cases}
v_t + \left(av_x \right) _x =h, & (t,x) \in Q_T,\\
v_x(t,0)=v_x(t,1)=0, &  t \in (0,T).
\end{cases}
\end{equation}
Here we assume that $h \in L^2(Q_T)$ and on $a$ we make the following assumptions:
\begin{Assumptions}\label{Ass03}
The function $a$ is such that
\begin{enumerate}
\item the operator $A_1$ is weakly or strongly degenerate;
\item in the weakly degenerate case there exist $B_1\in (0,x_0)$, $B_2\in (1,2-x_0)$,
two functions $\fg \in L^\infty_{\rm loc}((-x_0, 2-x_0)\setminus \{x_0\})$, $\fh (\cdot, B_i) \in W^{1,\infty}_{\rm loc}((-x_0, 2-x_0)\setminus \{x_0\})$ and
two strictly positive constants $\fg_0$, $\fh_0$ such that $\fg(x) \ge \fg_0$ and
\begin{equation}\label{rieccola}
-\frac{\tilde a'(x)}{2\sqrt{\tilde a(x)}}\left(\int_x^{B_i}\fg(t) dt + \fh_0 \right)+ \sqrt{\tilde a(x)}\fg(x) =\fh(x,B_i)\quad \text{for a.e.} \; x \in (-x_0, 2-x_0)
\end{equation}
with $i=1,2$, $-x_0< x<B_1$ or $x_0 <x <B_2$, and
\begin{equation}\label{tildea}
\tilde a(x):= \begin{cases}
a(2-x),& x \in [1,2],\\
a(x), & x \in [0,1],\\
a(-x), & x \in [-1,0];
\end{cases}
\end{equation}
 \item
if $A_1$ is strongly degenerate and $K > \displaystyle
\frac{4}{3}$, then there exists a constant $\vartheta \in
(0, K]$ such that the function
\begin{equation}\label{dainfinito}
\begin{array}{ll}
x \mapsto \dfrac{a(x)}{|x-x_0|^{\vartheta}} &
\begin{cases}
& \mbox{ is nonincreasing on the left of $x=x_0$,}\\
& \mbox{ is nondecreasing on the right of $x=x_0$.}\\
\end{cases}
\end{array}
\end{equation}
In addition, when $K > \displaystyle\frac{3}{2}$, the previous map is bounded below away from $0$
and there exists a constant $\Sigma>0$ such that
$
|a'(x)|\leq \Sigma |x-x_0|^{2\vartheta-3} \mbox{ for a.e. }x\in
[0,1].
$

Here $K$ is the constant that appears in Definition \ref{def2}.

\end{enumerate}
\end{Assumptions}

\begin{Remark}\label{remark5} The additional requirements when $K>3/2$
are technical
ones and are introduced in \cite[Hypothesis 4.1]{fm1} to guarantee the convergence of some
integrals for this sub-case (see \cite[Appendix]{fm1}). Of course,
the prototype $a(x)=|x-x_0|^K$ satisfies such a condition with
$\vartheta=K$.
\end{Remark}

\begin{Remark}\label{remspostata}
Since we require identities \eqref{rieccola}  far from $x_0$, once $a$ is given, it is easy to find $\fg,\fh,\fg_0$ and $\fh_0$ with the desired properties. For example, if $a(x):= |x-x_0|^\alpha, \alpha \in (0,1)$,  in \eqref{rieccola} we can take $\fg_0=\min \left\{1, \frac{\alpha(B_1+1)+x_0}{x_0}, \frac{\alpha B_2+ 1-x_0}{1-x_0}\right\}$, $\fh_0=1 $, $
\fg(x) = 1$ in $[0,x_0]\cup[1,2-x_0)$, while 
$
\fg(x) =
\frac{\alpha(B_1+1)+x_0}{x_0}=:L_1$ in $(-x_0, 0)$ and 
$
\fg(x) =\frac{\alpha B_2+ 1-x_0}{1-x_0}=:L_2$ in $(x_0, 1)$.
On the other hand  
\[\fh (x,B_1)= \begin{cases}\ds (x_0-x)^{\frac{\alpha}{2}-1}\left[ \frac{\alpha}{2} (B_1+1-x) + (x_0-x)\right], & x \in [0, B_1),\\
(x+x_0)^{\frac{\alpha}{2}-1}\left[\ds-\frac{\alpha}{2} (-L_1x+B_1+1) + L_1(x+x_0)\right], & x  \in (-x_0, 0)
\end{cases}
\]
and
\[\fh (x,B_2)= \begin{cases}
(x-x_0)^{\frac{\alpha}{2}-1} \left[\ds-\frac{\alpha}{2}(L_2(1-x) + B_2) + L_2(x-x_0)\right], & x  \in (x_0, 1)\\
\ds (2-x-x_0)^{\frac{\alpha}{2}-1}\left[ \frac{\alpha}{2} (B_2+1-x) + (2-x-x_0)\right], & x \in [1, B_2),\\
\end{cases}
\]
 Clearly, $\fg \in L^\infty_{\rm loc}((-x_0, 2-x_0)\setminus \{x_0\})$ and $\fh \in W^{1,\infty}_{\rm loc}((-x_0, 2-x_0)\setminus \{x_0\})$.
 
 Obviously, with $W^{1,\infty}_{\rm loc}((-x_0, 2-x_0)\setminus \{x_0\})$ we denote the space of functions belonging to $W^{1,\infty}((-x_0, 2-x_0))$ far away from $\{x_0\}$.

\end{Remark}

As in \cite{fm} or in \cite[Chapter 4]{fm1}, let us introduce the functions
$
\varphi(t,x):
=\Theta(t)\psi(x),
$
where
\begin{equation}\label{c_1}
\Theta(t) := \frac{1}{[t(T-t)]^4} \quad \text{and} \quad
\psi(x) := c_1\left[\int_{x_0}^x \frac{y-x_0}{a(y)}dy- c_2\right],
\end{equation}
with $c_2> \displaystyle \max\left\{\frac{(1-x_0)^2}{a(1)(2-K)},
\frac{x_0^2}{a(0)(2-K)}\right\}$ and $c_1>0$.
 Observe that $\Theta (t)
\rightarrow + \infty \, \text{ as } t \rightarrow 0^+, T^-$ and by
\cite[Lemma 2.1]{fm}, we have that
$
-c_1c_2 \le \psi(x)<0$. Therefore, define, for $A<B$,
\begin{equation}\label{c_1nd}
\rho_{A,B}(x)\!:=\!\!\begin{cases} \displaystyle - r\left[\int_{A}^x\!
\frac{1}{\sqrt{\tilde a(t)}} \int_t^{B}
\fg(s) dsdt\! + \!\!\int_{A}^x \!\frac{\fh_0}{\sqrt{\tilde a(t)}}dt\right] -\mathfrak{c},\!\!\! &\text{in the weakly degenerate case,}\\
\displaystyle  e^{r\zeta(x)}-\mathfrak{c},\!\!\! &\text{in the strongly degenerate case,}\end{cases}
\end{equation}
where
 \[
\zeta(x)=\mathfrak{d}\int_x^{B}\frac{1}{\tilde a(t)}dt,
\]
being $\fd=\|\tilde a'\|_{L^\infty(A,B)}$,  $r>0$ and $\mathfrak{c}>0$ such that $\displaystyle
\max_{[A,B]} \rho_{A,B}<0$.

Our main result is thus the following:
\begin{Theorem}\label{Cor1}
Assume Hypothesis  $\ref{Ass03}$.
 Let $\omega\subset \subset (0,1)$ be an open interval containing $x_0$, $B_1$ and $2-B_2$, or let $\omega= \omega_1\cup \omega_2$, where $\omega_i=(\lambda_i,\beta_i) \subset (0,1), \, i=1,2,$
$\beta_1\leq B_1$ and $2-B_2\le \lambda_2$. Then, there exist two positive constants $C$ and $s_0$ (depending on $\lambda$) such that every
solution  $v$
of \eqref{1} in
$
\mathcal{V}:=L^2\big(0, T; D(A_1)\big) \cap H^1\big(0,
T;H^1_a(0,1)\big)
$
satisfies, for all $s \ge s_0$,
\begin{equation}\label{carleman1}
\begin{aligned}
&\int_0^T\int_0^1 \left(s\Theta a(v_x)^2 + s^3 \Theta^3
\frac{(x-x_0)^2}{a} v^2\right)e^{2s\varphi}dxdt \le C  \int_0^T \int_\omega v^2dxdt\\
& + C\left(\int_0^T\int_0^1 h^{2}e^{2s\varphi}dxdt+\int_0^T \int_0^{\beta_1}
 h^2 e^{2s \Phi_1(t,-x)}dxdt+ \int_0^T \int_{\lambda_2}^1 h^2e^{2s \Phi_2(t,x)}dxdt\right).
 \end{aligned}
\end{equation}
Here
$\Phi_{1}(t,x): =\Theta(t)\rho_{-\beta_1, \beta_1}(x)$ and  $\Phi_{2}(t,x): =\Theta(t)\rho_{\lambda_2, 2-\lambda_2}(x)$.
\end{Theorem}

\begin{Remark}\label{oss_carleman}
Observe that an inequality analogous to \eqref{carleman1} in the non degenerate case is proved in \cite{LRL}, where the authors show that
\begin{equation}\label{lrl}
\int_0^T\int_0^1 \left(s\Theta (v_x)^2 + s^3 \Theta^3
v^2\right)e^{2s\varphi}dxdt\le C\left(\int_0^T\int_0^1 h^{2}e^{2s\varphi}dxdt+ s^3\int_0^T \int_\omega \Theta^3 v^2 e^{2s\varphi} \right),
\end{equation}
for a different weight function $\varphi$ and for a fixed subset $\omega$ compactly contained in $(0,1)$.
We underline that we don't have $s^3\Theta^3$ in
the term $ \int_0^T\int_\omega v^2e^{2s\varphi}dxdt$ and we cannot  estimated  such an integral by
\[s^3\int_0^T\int_0^1 \Theta^3
\frac{(x-x_0)^2}{a} v^2e^{2s\varphi}dxdt\]
due to the degeneracy term, and so \eqref{carleman1} is a good alternative of \eqref{lrl}.
 \end{Remark}

In order to prove the previous theorem the following Carleman estimate given in \cite[Theorem 4.1]{fm1} is crucial:
\begin{Theorem}\label{Cor1fm1}
Assume Hypothesis  $\ref{Ass03}$. Then, there exist
two positive constants $C$ and $s_0$ such that every solution $v \in L^2\big(0, T; {\cal H}^2_a(0,1)\big) \cap H^1\big(0,
T;{\cal H}^1_a(0,1)\big)$ of
\[
\begin{cases}
v_t + (av_x)_x =h, & (t,x) \in (0,T) \times (0,1),\\
v(t,0)=v(t,1)=0, &  t \in (0,T)\\
\end{cases}
\]
satisfies, for all $s \ge s_0$,
\[
\begin{aligned}
&\int_{Q_T} \left(s\Theta a(v_x)^2 + s^3 \Theta^3
\frac{(x-x_0)^2}{a} v^2\right)e^{2s\varphi}dxdt\\
&\le C\left(\int_{Q_T} h^{2}e^{2s\varphi}dxdt +
sc_1\int_0^T\left[a\Theta e^{2s \varphi}(x-x_0)(v_x)^2
dt\right]_{x=0}^{x=1}\right),
\end{aligned}
\]
where $c_{1}$ is the constant introduced in \eqref{c_1}. Here
\[
\begin{aligned}
{\cal H}^1_a(0,1):=\big\{&u \text{ is absolutely continuous in }
[0,1],
\\ & \sqrt{a} u' \in  L^2(0,1) \text{ and } u(0)=u(1)=0
\big\},
\end{aligned}
\]
in the weakly degenerate case and
\[
\begin{aligned}
{\cal H}^1_a(0,1):=\big\{ u \in L^2(0,1) \ \mid  \,&u \text{ locally
absolutely continuous in } [0,x_0) \cup (x_0,1], \\ & \sqrt{a} u'
\in L^2(0,1) \text{ and } u(0)= u(1)=0 \big\}
\end{aligned}
\]
in the strong one. In any case
\[
\label{Ha2} \qquad {\cal H}^2_a(0,1) :=  \big\{ u \in {\cal
H}^1_a(0,1) |\,au' \in H^1(0,1)\big\}.
\]

\end{Theorem}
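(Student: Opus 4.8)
The plan is to prove this estimate by the classical conjugation-and-splitting method for degenerate Carleman estimates, working throughout on the transformed function rather than on $v$ directly. Since $\Theta(t)=[t(T-t)]^{-4}\to+\infty$ as $t\to0^+,T^-$ and $\psi<0$, the weight $e^{2s\varphi}$ decays faster than any power of $t$ or $T-t$ at the endpoints, so setting $w:=e^{s\varphi}v$ one has $w(0,\cdot)=w(T,\cdot)=0$ together with the vanishing of its time derivatives at $t=0,T$; this is what makes the integrations by parts in $t$ legitimate. First I would compute the conjugated operator $L_s w:=e^{s\varphi}\big((e^{-s\varphi}w)_t+(a(e^{-s\varphi}w)_x)_x\big)$ and organize the result into a formally self-adjoint part and a formally skew-adjoint part (in $L^2(Q_T)$),
\[
P_s^+ w := (a w_x)_x + s^2 a \varphi_x^2 w, \qquad P_s^- w := w_t - 2s a\varphi_x w_x - s(a\varphi_x)_x w,
\]
so that $P_s^+ w + P_s^- w = e^{s\varphi}h + s\varphi_t w =: f_s$. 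The entire computation is driven by the two algebraic identities built into the choice of $\psi$ in \eqref{c_1}, namely $a\varphi_x=c_1\Theta(x-x_0)$ and $(a\varphi_x)_x=c_1\Theta$, which are precisely what keep the coefficients explicit and the degenerate weight $\frac{(x-x_0)^2}{a}$ under control.

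The core of the argument is the integrated energy identity coming from $\|f_s\|_{L^2(Q_T)}^2=\|P_s^+ w\|^2+\|P_s^- w\|^2+2\langle P_s^+ w, P_s^- w\rangle$, which yields $2\langle P_s^+ w, P_s^- w\rangle\le\|f_s\|^2$. I would expand the inner products making up $\langle P_s^+ w, P_s^- w\rangle$ by repeated integration by parts in $x$ and $t$. Using $a\varphi_x=c_1\Theta(x-x_0)$ and $(a\varphi_x)_x=c_1\Theta$, the dominant contributions assemble into the two good distributed terms
\[
s\int_{Q_T}\Theta a w_x^2\,dxdt \quad\text{and}\quad s^3\int_{Q_T}\Theta^3\frac{(x-x_0)^2}{a}w^2\,dxdt,
\]
each with a positive coefficient, while the remaining integrals fall into two categories: genuine boundary terms evaluated at $x=0,1$ and at the degeneracy point $x_0$, and lower-order distributed terms. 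In the leading $s^3$ pairing the error $\Theta^3\frac{(x-x_0)^3 a'}{a^2}w^2$ is exactly the place where the structural bound $(x-x_0)a'\le K a$ from Definitions \ref{def1}--\ref{def2} is used to keep it subordinate to the good weight.

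Handling the boundary and degeneracy contributions is where Hypothesis \ref{Ass03} enters and is the main obstacle. At $x=0$ and $x=1$ the Dirichlet conditions give $w(t,0)=w(t,1)=0$, hence $w_x=e^{s\varphi}v_x$ there, collapsing the surviving boundary integral into exactly $sc_1\int_0^T[a\Theta e^{2s\varphi}(x-x_0)(v_x)^2]_{x=0}^{x=1}$, which is the term written on the right-hand side. The delicate part is the interior point $x_0$: since $a(x_0)=0$, the weight $\frac{(x-x_0)^2}{a}$ and the boundary terms generated at $x_0^\pm$ must be shown to converge and to carry the correct sign. This is the role of the monotonicity condition \eqref{dainfinito} on $x\mapsto a(x)/|x-x_0|^\vartheta$ and, in the subcases $K>4/3$ and $K>3/2$, of the lower bound on that map and the pointwise control $|a'(x)|\le\Sigma|x-x_0|^{2\vartheta-3}$; these guarantee the integrability near $x_0$ of every term produced and force the $x_0$-boundary contributions to vanish for functions in $\mathcal{H}^2_a(0,1)$, for which $au_x\in H^1(0,1)$. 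I would treat the regions left and right of $x_0$ separately and pass to the limit $x\to x_0^\pm$, the regularity $v\in L^2(0,T;\mathcal{H}^2_a)\cap H^1(0,T;\mathcal{H}^1_a)$ justifying each integration by parts.

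It remains to absorb the lower-order distributed terms and return to $v$. These terms carry factors such as $\varphi_t=\dot\Theta\psi$, $\varphi_{tt}$ and $(a\varphi_x)_x=c_1\Theta$, and since $\Theta=[t(T-t)]^{-4}$ satisfies $|\dot\Theta|\le C\Theta^{5/4}$ and $|\ddot\Theta|\le C\Theta^{3/2}$ while $\psi$ is bounded, they are all dominated by $s^3\Theta^3$ once $s\ge s_0$; the passage from a zeroth-order integral $\int_{Q_T}\Theta^\gamma w^2$ to the degenerate weight $\int_{Q_T}\Theta^3\frac{(x-x_0)^2}{a}w^2$ is supplied by a weighted Hardy--Poincar\'e inequality adapted to the interior degeneracy. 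Choosing $s_0$ large enough absorbs all of these into the two good terms with a fixed fraction to spare. Finally, undoing the substitution via $w=e^{s\varphi}v$, so that $w_x=e^{s\varphi}(v_x+s\varphi_x v)$ in the interior, converts the estimate for $w$ into the claimed inequality for $v$ with weight $e^{2s\varphi}$, the cross terms being reabsorbed by the same Hardy--Poincar\'e argument; the density of smooth functions in the indicated space then extends the estimate to every admissible $v$.
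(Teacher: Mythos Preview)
The paper does not give a proof of this statement at all: Theorem~\ref{Cor1fm1} is simply quoted from \cite[Theorem 4.1]{fm1} and used as a black box in the proof of Theorem~\ref{Cor1} via the reflection/cut-off procedure. So there is no ``paper's own proof'' to compare against here.

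That said, your outline is essentially the method of \cite{fm} and \cite{fm1}: conjugate by $e^{s\varphi}$, split the resulting operator into the pieces you call $P_s^\pm$, exploit the identities $a\varphi_x=c_1\Theta(x-x_0)$ and $(a\varphi_x)_x=c_1\Theta$ to extract the two positive distributed terms from the cross product, and use the structural bound $(x-x_0)a'\le K a$ together with Hypothesis~\ref{Ass03}.3 to control the degeneracy contributions at $x_0$. This is correct in spirit and would reconstruct the cited result.

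One point deserves tightening. You write that the passage from a zeroth-order integral $\int_{Q_T}\Theta^\gamma w^2$ to the weighted term $\int_{Q_T}\Theta^3\frac{(x-x_0)^2}{a}w^2$ is supplied by a Hardy--Poincar\'e inequality. This is not quite the mechanism: near $x_0$ the weight $\frac{(x-x_0)^2}{a}$ degenerates to $0$ (for $K<2$), so it cannot dominate the unweighted $w^2$. What actually happens in \cite{fm,fm1} is that the Hardy--Poincar\'e inequality of \cite[Proposition~2.3]{fm} controls $\int \frac{a}{(x-x_0)^2}w^2$ by $\int a\,w_x^2$; since \cite[Lemma~2.1]{fm} shows $\frac{a}{(x-x_0)^2}$ is bounded below away from zero on $[0,1]$, this yields $\int w^2\le C\int a\,w_x^2$, and the lower-order terms are then absorbed into the \emph{gradient} term $s\int\Theta a\,w_x^2$ rather than into the weighted $L^2$ term. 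With this correction your sketch matches the original argument.
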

We underline the fact that in \cite{fm1} the previous theorem is proved in the weakly degenerate case under weaker assumptions.
\begin{proof}[Proof of Theorem \ref{Cor1}]
To prove the statement we use a technique based on cut off functions. First, assume that $x_0,B_1, 2-B_2\in \omega$. Then, we can fix two subintervals
$\omega_1=(\lambda_1,\beta_1)\subset \subset(0, x_0),
\omega_2=(\lambda_2,\beta_2) \subset\subset (x_0,1)$, with $\beta_1=B_1$ and $\lambda_2=2-B_2$, and four points $\bar \lambda_i, \bar \beta_i  \in (\lambda_i, \beta_i)$, $i=1,2$, with $\bar \lambda_ i  < \bar \beta_i$. Consider a smooth
function $\xi:[-1,2]\to[0,1]$ such that
\begin{equation}\label{xi1}
\xi(x)=\begin{cases} 0&x\in [-1,-\bar\beta_1]\cup[\bar \beta_1, \bar\lambda_2] \cup[2-\bar \lambda_2, 2],\\
1 & x\in[-\tilde \lambda_1,\tilde \lambda_1] \cup [\tilde\lambda_2, 2-\tilde\lambda_2],\end{cases}
\end{equation}
where $\tilde \lambda_i=(\bar \lambda_i+\bar \beta_i)/2$, $i=1,2$. 
Now, we consider
\begin{equation}\label{W}
W(t,x):= \begin{cases}v(t,-x), & x \in [-1,0],\\
v(t,x), & x \in [0,1],\\
v(t, 2-x), &x \in [1,2],
\end{cases}
\end{equation}
where $v$ solves \eqref{1}.
Thus $W$ satisfies the following problem
\begin{equation}\label{PW}
\begin{cases}
W_t + \left(\tilde a W_x \right) _x =\tilde h, & (t,x) \in (0,T) \times (-1,2),\\
W_x(t,-1)=W_x(t,2)=0, &  t \in (0,T),\\
\end{cases}
\end{equation}
being \begin{equation}\label{tildeh}
\tilde h(t,x) := \begin{cases}
h(t, 2-x), &x \in [1,2],\\
h(t,x), & x \in [0,1],\\
h(t,-x), & x \in [-1,0].
\end{cases}
\end{equation}
Observe that $\tilde a$ belongs to $W^{1,1}(-1,2)$ in the weakly degenerate case and to $W^{1,\infty}(-1,2)$ in the strongly degenerate one.
Now, set $Z:= \xi W$. Then $Z$ solves
\begin{equation}\label{p1}
\begin{cases}
Z_t + \left(\tilde a Z_x \right) _x =H, & (t,x) \in (0,T) \times (-\beta_1, \beta_1),\\
Z(t,-\beta_1)=Z(t,\beta_1), & t \in (0,T),\\
\end{cases}
\end{equation}
and
\begin{equation}\label{p1'}
\begin{cases}
Z_t + \left(\tilde a Z_x \right) _x =H, & (t,x) \in (0,T) \times (\lambda_2, 2-\lambda_2),\\
Z(t,\lambda_2)= Z(t, 2-\lambda_2)=0, &  t \in (0,T),\\
\end{cases}
\end{equation}
 with $H:=\xi \tilde h+ ( \tilde a \xi_x W)_x + \tilde
a\xi_x W_x$. Observe that $Z_x(t, -\beta_1)= Z_x(t, \beta_1)=Z_x(t, \lambda_2)=Z_x(t,2-\lambda_2)=0$ and, by the assumption on $a$ and the fact that $\xi_x $ is supported in $[-\bar\beta_1,-\tilde\lambda_1]\cup[\tilde\lambda_1, \bar \beta_1]\cup[\bar \lambda_2, \tilde \lambda_2]\cup[2-\tilde \lambda_2, 2 -\bar \lambda_2]$, $H\in L^2((0,T) \times I)$, where $I:= (-\beta_1, \beta_1)\cup(\lambda_2, 2-\lambda_2)$.

Thus, we can apply the
analogue of \cite[Theorem 3.1]{fm1} on $(-\beta_1,\beta_1)$ in place of
$(A, B)$ and with weight $\Phi_1$, obtaining that there exist
two positive constants $C$ and $s_0$ ($s_0$ sufficiently large), such that $Z$
satisfies, for all $s \ge s_0$,
\[
\int_0^T\int_{-\beta_1}^{\beta_1} \left(s\Theta (Z_x)^2 + s^3 \Theta^3
Z^2\right)e^{2s\Phi_1}dxdt\le C\int_0^T\int_{-\beta_1}^{\beta_1}  H^{2}e^{2s\Phi_1}dxdt.
\]
Now, as in \cite{fm1}, we can prove that exists a positive constant $k$ such that
\begin{equation}\label{prima'}
\tilde a(x) e^{2s\varphi(t,x)} \le k e^{2s\Phi_1(t,x)}
\end{equation}
and
\begin{equation}\label{seconda'}
\frac{(x-x_0)^2}{\tilde a(x)}e^{2s\varphi(t,x)} \le k e^{2s \Phi_1(t,x)}
\end{equation}
for every $(t,x) \in [0, T] \times \left[-\beta_1, {\beta_1}\right]$. 

Thus, by definitions of $\xi$, $W$ and $Z$, we have
\[
\begin{aligned}
&\int_0^T\int_0^{\tilde \lambda_1} \left(s\Theta a(v_x)^2 + s^3 \Theta^3
\frac{(x-x_0)^2}{a} v^2\right)e^{2s\varphi}dxdt
\\& \le \int_0^T\int_{-\beta_1}^{\beta_1} \left(s\Theta \tilde a(Z_x)^2 + s^3 \Theta^3
\frac{(x-x_0)^2}{\tilde a} Z^2\right)e^{2s \varphi}dxdt
\\
&\le C\int_0^T\int_{-\beta_1}^{\beta_1} \left(s\Theta (Z_x)^2 + s^3 \Theta^3
Z^2\right)e^{2s\Phi_1}dxdt\le C\int_0^T\int_{-\beta_1}^{\beta_1}  H^{2}e^{2s\Phi_1}dxdt.
\end{aligned}
\]
Using again the fact that $\xi_x $ is supported in $[-\bar\beta_1,-\tilde\lambda_1]\cup[\tilde\lambda_1, \bar \beta_1]\cup[\bar \lambda_2, \tilde \lambda_2]\cup[2-\tilde \lambda_2, 2 -\bar \lambda_2]$  and the boundedness of $\tilde a'$ (far away from $x_0$  in the weakly degenerate case,  see \eqref{rieccola}, and since $a\in $ $W^{1,\infty}(-1,2)$ in the strongly degenerate one), it follows, by the Caccioppoli's inequality for the nondegenerate case (see, e.g., \cite[Remark 7]{fmJDE}).
\[
\begin{aligned}
&\int_0^T\int_{-\beta_1}^{\beta_1}  H^{2}e^{2s\Phi_1}dxdt = \int_0^T\int_{-\beta_1}^{\beta_1} (\xi \tilde h+ ( \tilde a \xi_x W)_x + \tilde
a\xi_x W_x)^2e^{2s\Phi_1}dxdt\\
&\le C\left( \int_0^T\int_{-\beta_1}^{\beta_1}  \tilde h^2e^{2s\Phi_1}dxdt + \int_0^T\left(\int_{-\bar\beta_1}^{-\tilde\lambda_1}+ \int_{\tilde\lambda_1}^{\bar\beta_1}\right)(W^2 + W_x^2)e^{2s\Phi_1}dxdt \right)\\
&\le C\left( \int_0^T\int_{-\beta_1}^{\beta_1}  \tilde h^2e^{2s\Phi_1}dxdt + \int_0^T\left(\int_{-\beta_1}^{-\lambda_1}+ \int_{\lambda_1}^{\beta_1}\right)W^2 dxdt \right)\\
&\le C \left(\int_0^T \int_{-\beta_1}^{\beta_1}\tilde h^2 e^{2s \Phi_1}dxdt +  \int_0^T\int_{\lambda_1}^{\beta_1}W^2dxdt\right)\\
&\le  C \int_0^T \int_{-\beta_1}^{\beta_1}
\tilde h^2 e^{2s \Phi_1}dxdt + C \int_0^T\int_\omega v^2dxdt.
\end{aligned}
\]
Now, observe that
\begin{equation}\label{hnew}
\int_0^T\int_{-\beta_1}^{\beta_1} \tilde h^2(t,x)e^{2s \Phi_1(t,x)} dxdt \le \int_0^T\int_0^{\beta_1}h^2 (t,x)e^{2s \Phi_1(t,-x)} dxdt.
\end{equation}
Indeed, using a change of variable and the definition of $\tilde h$, it results
\[
\begin{aligned}
&\int_{-\beta_1}^{\beta_1} \tilde h^2(t,x)e^{2s\Phi_1(t,x)} dx = \int _{-\beta_1}^0 \tilde h^2(t,x)e^{2s \Phi_1(t,x)} dx+ \int_0^{\beta_1}\tilde h^2(t,x)e^{2s\Phi_1(t,x)} dx\\
&=\int _0^{\beta_1} \tilde h^2(t,-y)e^{2s \Phi_1(t,-y)} dy+ \int_0^{\beta_1} h^2(t,x)e^{2s \Phi_1(t,x)} dx\\
&\le\int _0^{\beta_1}  h^2(t,y)e^{2s \Phi_1(t,-y)} dy+ \int_0^{\beta_1} h^2(t,x)e^{2s \Phi_1(t,-x)} dx,
\end{aligned}
\]
since one has $\rho_{-\beta_1, \beta_1} (x) \le \rho_{-\beta_1, \beta_1}(-x),$  for all $x \in [0,\beta_1]$.
Hence, using the definitions of $\Phi_1$, $\tilde a$, $\tilde h$ and $W$, it results
\begin{equation}\label{sopra0}
\begin{aligned}
&\int_0^T\int_0^{\tilde \lambda_1} \left(s\Theta a(v_x)^2 + s^3 \Theta^3
\frac{(x-x_0)^2}{a} v^2\right)e^{2s\varphi}dxdt \\&\le C\left( \int_0^T \int_0^{\beta_1}
 h^2 e^{2s \Phi_1(t,-x)}dxdt+\int_0^T \int_\omega v^2dxdt\right)
\end{aligned}
\end{equation}
for all $s \ge s_0$. Analogously, we can choose $s_0$ so large that, for all $s \ge s_0$ and for a positive constant $C$:
\[
\begin{aligned}
&\int_0^T\int_{\tilde\lambda_2}^1 \left(s\Theta a(v_x)^2 + s^3 \Theta^3
\frac{(x-x_0)^2}{a} v^2\right)e^{2s\varphi}dxdt\\
&\le  C \left(\int_0^T \int_{\lambda_2}^1 h^2e^{2s \Phi_2(t,x)}dxdt + \int_0^T \int_\omega v^2dxdt\right),
\end{aligned}
\]
since $\rho_{\lambda_2, 2-\lambda_2}(2-x) \le  \rho_{\lambda_2, 2-\lambda_2}(x)$ for all $x \in [\lambda_2, 1]$.
Finally, consider a
smooth function $\eta: [0,1] \to [0,1]$ such that 
\begin{equation}\label{eta}
\eta(x):= \begin{cases}0, & x \in [0, \bar\lambda_1]\cup [\bar \beta_2,1],\\
1, & x \in [\tilde \lambda_1, \tilde\lambda_2].
\end{cases}
\end{equation}
Define $w:= \eta v$. Then $w$ satisfies
\begin{equation}\label{p1''}
\begin{cases}
w_t + \left(a w_x \right) _x =\bar h, & (t,x) \in (0,T) \times (0,1),\\
w(t,0)= w(t, 1)=0, &  t \in (0,T),\\
\end{cases}
\end{equation}
 with $\bar h:=\eta h+ (a \eta_x v)_x + 
a\eta_x v_x$
Hence, by Theorem \ref{Cor1fm1} and \cite[Proposition 5.2]{fm1}, we have
\[
\begin{aligned}
&\int_0^T\int_{\tilde\lambda_1}^{\tilde\lambda_!} \left(s\Theta a(v_x)^2 + s^3 \Theta^3
\frac{(x-x_0)^2}{a} v^2\right)e^{2s\varphi}dxdt
\\
&\le  C \int_0^T \int_0^1 \left(s\Theta a(w_x)^2 + s^3 \Theta^3
\frac{(x-x_0)^2}{a} w^2\right)e^{2s\varphi}dxdt\\
 & \le  C \int_0^T \int_0^1
 (\eta  h+ ( a \eta_x v)_x + 
a\eta_x v_x)^2e^{2s\varphi}dxdt \\
 & \le C \left(\int_0^T \int_0^1
 h^2 e^{2s\varphi}dxdt + \int_0^T \left(\int_{\bar \lambda_1}^{\tilde\lambda_1}+ \int_{\tilde\lambda_2}^{\bar\beta_2}\right)(v^2+v_x^2)e^{2s  \varphi}dxdt \right)\\
& \le C\left( \int_0^T \int_0^1
 h^2e^{2s\varphi} dxdt + \int_0^T \int_\omega v^2dxdt\right) .
\end{aligned}
\]
Hence,  we can choose $s_0$ so large that, for all $s \ge s_0$ and for a positive constant $C$,  
\[
\begin{aligned}
&\int_0^T\int_0^1 \left(s\Theta a(v_x)^2 + s^3 \Theta^3
\frac{(x-x_0)^2}{a} v^2\right)e^{2s\varphi}dxdt\le C  \int_0^T \int_\omega v^2dxdt\\
&+ C\left(\int_0^T \int_0^1
 h^2e^{2s\varphi}dxdt  + \int_0^T \int_0^{\beta_1}
 h^2 e^{2s \Phi_1(t,-x)}dxdt+ \int_0^T \int_{\lambda_2}^1 h^2e^{2s \Phi_2(t,x)}dxdt\right).
\end{aligned}
\]
Nothing changes in the proof if $\omega= \omega_1\cup \omega_2$ and each of these intervals lye on different sides of $x_0$, as the assumption implies.
\end{proof}

We underline that, in the weakly degenerate case, the boundedness of $a'$ far away from $x_0$ is crucial in the previous proof. Indeed,  thanks to it, we are able to estimate the integral $\int_0^T\left(\int_{-\bar\beta_1}^{-\tilde \lambda_1}+ \int_{\tilde \lambda_1}^{\bar\beta_1}\right)[( \tilde a \xi_x W)_x]^2e^{2s\Phi_1}dxdt$.
\section{Carleman estimate for degenerate parabolic
problems: the non divergence case}\label{Carleman estimate nondiv}
In this section we prove the analogue of the Carleman estimate given in Theorem \ref{Cor1} for the
adjoint problem of \eqref{linear} in the non divergence case, when the degeneracy is weak or strong:
\begin{equation}\label{1'}
\begin{cases}
v_t + av_{xx} =h, & (t,x) \in Q_T,\\
v_x(t,0)=v_x(t,1)=0, &  t \in (0,T).\\
\end{cases}
\end{equation}
Here $h \in L^2_{\frac{1}{a}}(Q_T),$
while on $a$ we make the following assumptions:
\begin{Assumptions}\label{Ass04}
The function $a$ is such that
\begin{enumerate}
\item the operator $A_2$ is weakly or strongly degenerate;
\item the function $\displaystyle \frac{(x-x_0)a'(x)}{a(x)} \in W^{1,\infty}(0,1)$;
\item  in the weakly degenerate case there exist $B_1\in (0,x_0)$, $B_2\in (1,2-x_0)$,
two functions $\fg \in L^\infty_{\rm loc}((-x_0, 2-x_0)\setminus \{x_0\})$, $\fh (\cdot, B_i) \in W^{1,\infty}_{\rm loc}((-x_0, 2-x_0)\setminus \{x_0\})$ and
two strictly positive constants $\fg_0$, $\fh_0$ such that $\fg(x) \ge \fg_0$ and
\begin{equation}\label{5.3'}
\frac{\tilde a'(x)}{2\sqrt{\tilde a(x)}}\left(\int_x^{B_i}\fg(t) dt + \fh_0 \right)+ \sqrt{\tilde a(x)}\fg(x) =\fh(x,B_i)\quad \text{for a.e.} \; x \in (-x_0, 2-x_0)
\end{equation}
with $i=1,2$, $-x_0< x<B_1$ or $x_0 <x <B_2$, and
 $\tilde a$  as in \eqref{tildea};
\item if $K \ge \displaystyle
\frac{1}{2}$ \eqref{dainfinito} holds.
 \end{enumerate}
\end{Assumptions}

\begin{Remark}
We underline the fact that in the non
divergence case the assumptions on  $a$ are weaker than in the divergence case: the additional condition when $K > 3/2$
is not necessary, since all integrals and integrations by parts are
justified by definition of $D(A_2)$.

Moreover, Hypothesis \ref{Ass03}$.3$ is substituted by Hypothesis \ref{Ass04}$.4$, which is essential to prove \cite[Theorem 4.2]{fm1} (see \cite[Lemma 4.3]{fm1} and \cite[Lemma 3.10]{cfr} or \cite[Lemma 5]{cfr1} for the case when the degeneracy occurs at the boundary of the domain).
\end{Remark}

\begin{Remark}
As in Remark \ref{remspostata} we can take $\fg_0=\min \left\{1, \frac{-\alpha(B_1+1)+x_0}{x_0}, \frac{-\alpha B_2+ 1-x_0}{1-x_0}\right\}$, $\fh_0=1 $, $
\fg(x) = 1$ in $[0,x_0]\cup[1,2-x_0)$, while 
$
\fg(x) =
\frac{-\alpha(B_1+1)+x_0}{x_0}=:L_1$ in $(-x_0, 0)$ and 
$
\fg(x) =\frac{-\alpha B_2+ 1-x_0}{1-x_0}=:L_2$ in $(x_0, 1)$.
On the other hand  
\[\fh (x,B_1)= \begin{cases}\ds (x_0-x)^{\frac{\alpha}{2}-1}\left[- \frac{\alpha}{2} (B_1+1-x) + (x_0-x)\right], & x \in [0, B_1),\\
(x+x_0)^{\frac{\alpha}{2}-1}\left[\ds\frac{\alpha}{2} (-L_1x+B_1+1) + L_1(x+x_0)\right], & x  \in (-x_0, 0)
\end{cases}
\]
and
\[\fh (x,B_2)= \begin{cases}
(x-x_0)^{\frac{\alpha}{2}-1} \left[\ds\frac{\alpha}{2}(L_2(1-x) + B_2) + L_2(x-x_0)\right], & x  \in (x_0, 1)\\
\ds (2-x-x_0)^{\frac{\alpha}{2}-1}\left[- \frac{\alpha}{2} (B_2+1-x) + (2-x-x_0)\right], & x \in [1, B_2),\\
\end{cases}
\]
 Again $\fg \in L^\infty_{\rm loc}((-x_0, 2-x_0)\setminus \{x_0\})$ and $\fh \in W^{1,\infty}_{\rm loc}((-x_0, 2-x_0)\setminus \{x_0\}; L^\infty(0,1))$.
\end{Remark}
To prove an estimate of Carleman type, we proceed as before. To this aim,
as in \cite[Chapter 4]{fm1}, let us introduce the function
\begin{equation}\label{gammabo}
\gamma(t,x):
=\Theta(t)\mu(x),
\end{equation}
where $\Theta$ is as in \eqref{c_1} and
\begin{equation}\label{c_1'}
\mu(x) := d_1\left(\int_{x_0}^x \frac{y-x_0}{a(y)}e^{R(y-x_0)^2}dy-
d_2\right).
\end{equation}
Here $d_2> \displaystyle
\max\left\{\frac{(1-x_0)^2e^{R(1-x_0)^2}}{(2-K)a(1)},
\frac{x_0^2e^{Rx_0^2}}{(2-K)a(0)}\right\}$, $R$ and $d_1$ are strictly positive constants.
The main result of this section is the following:
\begin{Theorem}\label{Cor2}
Assume Hypothesis  $\ref{Ass04}$. Let $\omega\subset \subset (0,1)$ be an open interval containing $x_0$, $B_1$ and $2-B_2$, or let $\omega= \omega_1\cup \omega_2$, where $\omega_i=(\lambda_i,\beta_i) \subset (0,1), \, i=1,2,$
$\beta_1\leq B_1$ and $2-B_2\le \lambda_2$. Then, there
exist two positive constants $C$ and $s_0$, such that every solution
$v$ of \eqref{1'} in
$
\mathcal{S}:=
L^2\big(0, T; H^2_{\frac{1}{a}}(0,1)\big)\cap H^1\big(0, T;H^1_{\frac{1}{a}}(0,1)\big) 
$
satisfies
\begin{equation}\label{car'}
\begin{aligned}&\int_0^T\int_0^1 \left(s\Theta (v_x)^2 + s^3 \Theta^3
\left(\frac{x-x_0}{a} \right)^2v^2\right)e^{2s\gamma}dxdt
 \le C  \int_0^T \int_\omega \frac{v^2}{a}dxdt\\
&+C\left( \int_0^T\int_0^1 \frac{h^{2}}{a}e^{2s\gamma(t,x)}dxdt
+  \int_0^T \int_0^{\beta_1}
 h^2 e^{2s \Phi_1(t,-x)}dxdt+ \int_0^T \int_{\lambda_2}^1 h^2e^{2s \Phi_2(t,x)}dxdt\right)
\end{aligned}
\end{equation}
for all $s \ge s_0$.
\end{Theorem}

To prove Theorem \ref{Cor2}, we will use the Carleman estimate given in  \cite[Theorem 4.2]{fm1} for the analogous problem of \eqref{1'} with Dirichlet boundary conditions:
\begin{Theorem}\label{Cor11fm1}
Assume Hypothesis  $\ref{Ass04}$. Then,
there exist two positive constants $C$ and $s_0$ such that every
solution $v\in 
L^2\big(0, T; \mathcal H^2_{\frac{1}{a}}(0,1)\big)\cap H^1\big(0, T;\mathcal H^1_{\frac{1}{a}}(0,1)\big) $ of
\begin{equation}\label{vprova}
\begin{cases}
v_t + av_{xx} =h & (t,x) \in Q_T,\\
v(t,0)=v(t,1)=0 &  t \in (0,T),\\
\end{cases}
\end{equation}
satisfies, for all $s \ge s_0$,
\begin{equation}\label{car}
\begin{aligned}
&\int_{Q_T} \left(s\Theta (v_x)^2 + s^3 \Theta^3
\left(\frac{x-x_0}{a} \right)^2v^2\right)e^{2s\varphi}dxdt\\
&\le C\left(\int_{Q_T} h^{2}\frac{e^{2s\varphi}}{a}dxdt +
sd_1\int_0^T\left[\Theta e^{2s \varphi}(x-x_0)(v_x)^2
dt\right]_{x=0}^{x=1}\right),
\end{aligned}
\end{equation}
where $d_{1}$ is the constant introduced in
\eqref{c_1'}. Here
\[
\mathcal H^1_{\frac{1}{a}}(0,1) :=L^2_{\frac{1}{a}}(0,1)\cap H^1_0(0,1),
\]
and
\[
\mathcal H^2_{\frac{1}{a}}(0,1) :=\Big\{ u \in \mathcal H^1_{\frac{1}{a}}(0,1) \,
\big| \, u'\in H^1(0,1)\Big\}.
\]
\end{Theorem}
\begin{proof}[Proof of Theorem \ref{Cor2}]
The proof is similar to the one of Theorem \ref{Cor1}. So we sketch it. To this aim consider $\bar \lambda_i, \tilde \lambda_i, \bar \beta_i$ ($i=1,2$), $\xi, \eta$, $W$ and $Z$
as in the proof of Theorem \ref{Cor1}. Obviously, $W$ and $Z$ satisfy, respectively, the following problems
\[
\begin{cases}
W_t + \tilde a W_{xx} =\tilde h, & (t,x) \in (0,T) \times (-1,2),\\
W_x(t,-1)=W_x(t,2)=0, &  t \in (0,T),
\end{cases}
\]
\begin{equation}\label{p2}
\begin{cases}
Z_t + \tilde a Z_{xx} =H, & (t,x) \in (0,T) \times (-\beta_1, \beta_1),\\
Z(t,-\beta_1)=Z(t,\beta_1), & t \in (0,T),\\
\end{cases}
\end{equation}
and
\[
\begin{cases}
Z_t + \tilde a Z_{xx} =H, & (t,x) \in (0,T) \times (\lambda_2, 2-\lambda_2),\\
Z(t,\lambda_2)= Z(t, 2-\lambda_2)=0, &  t \in (0,T),\\
\end{cases}
\]
 with $H:=\xi \tilde h+ \tilde a (\xi _{xx} W + 2\xi _x  W_x)$, being $\tilde a$ and $\tilde h$ defined as before. Observe that $Z_x(t, -\beta_1)= Z_x(t, \beta_1)=Z_x(t, \lambda_2)=Z_x(t,2-\lambda_2)=0$
and, by  the assumption on $a$, $H\in L^2((0,T); L^2_{\frac{1}{\tilde a}} (I)),$ where $I$ is as before.
Thus, we can apply the
analogue of \cite[Theorem 3.2]{fm1} on $(-\beta_1,\beta_1)$ in place of
$(0,1)$ and with weight $\Phi_1$, obtaining that there exist
two positive constants $C$ and $s_0$ ($s_0$ sufficiently large), such that, for all $s \ge s_0$,
\[
\int_0^T\int_{-\beta_1}^{\beta_1} \left(s\Theta (Z_x)^2 + s^3 \Theta^3
Z^2\right)e^{2s\Phi_1}dxdt\le C\int_0^T\int_{-\beta_1}^{\beta_1}  H^{2}e^{2s\Phi_1}dxdt.
\]
By definitions of $\xi$, $W$, $Z$ and by \eqref{hnew}, proceeding as before, we have
\[
\begin{aligned}
&\int_0^T\int_0^{\tilde \lambda_1} \left(s\Theta (v_x)^2 + s^3 \Theta^3
\left(\frac{x-x_0}{a} \right)^2v^2\right)e^{2s\gamma}dxdt
\\&\le \int_0^T\int_{-\beta_1}^{\beta_1} \left(s\Theta (Z_x)^2 + s^3 \Theta^3
\left(\frac{x-x_0}{\tilde a} \right)^2 Z^2\right)e^{2s \gamma}dxdt
\\
&\le C \int_0^T\int_{-\beta_1}^{\beta_1} \left(s\Theta (Z_x)^2 + s^3 \Theta^3
Z^2\right)e^{2s\Phi_1}dxdt\le C\int_0^T\int_{-\beta_1}^{\beta_1}  H^{2}e^{2s\Phi_1}dxdt\\
&\le C\left( \int_0^T \int_{-\beta_1}^{\beta_1}
\tilde h^2 e^{2s \Phi_1}dxdt + \int_0^T\int_{\lambda_1}^{\beta_1}W^2dxdt\right)\\
&\le C\left(\int_0^T \int_0^{\beta_1}
 h^2 e^{2s\Phi_1(t,-x)} dxdt + \int_0^T\int_{\lambda_1}^{\beta_1}\frac{v^2}{a}dxdt\right).
\end{aligned}
\]
for all $s \ge s_0$. Analogously, we can chose $s_0$ so large that, for all $s \ge s_0$ and for a positive constant $C$:
\[
\begin{aligned}
&\int_0^T\int_{\tilde \lambda_2}^1 \left(s\Theta (v_x)^2 + s^3 \Theta^3
\left(\frac{x-x_0}{a} \right)^2 v^2\right)e^{2s\gamma}dxdt
\\
&\le  C \left(\int_0^T \int_{\lambda_2}^1 h^2e^{2s \Phi_2(t,x)}dxdt + \int_0^T \int_{\lambda_2}^{\beta_2}\frac{v^2}{a}dxdt\right).
\end{aligned}
\]
Finally, consider $w:= \eta v$. Then $w$ satisfies
\[
\begin{cases}
w_t + a w_{xx} =\bar h, & (t,x) \in (0,T) \times (0,1),\\
w(t,0)= w(t, 1)=0, &  t \in (0,T),\\
\end{cases}
\]
 with $\bar h:=\eta h+  a (\eta _{xx} v + 2\eta_x  v_x)$.
Hence, by Theorem \ref{Cor11fm1} and \cite[Proposition 5.4]{fm1}, we have
\[
\begin{aligned}
&\int_0^T\int_{\tilde \lambda_1}^{\tilde\lambda_2} \left(s\Theta (v_x)^2 + s^3 \Theta^3
\left(\frac{x-x_0}{a} \right)^2 v^2\right)e^{2s\gamma}dxdt
\\
&\le  C \int_0^T \int_0^1 \left(s\Theta (w_x)^2 + s^3 \Theta^3
\left(\frac{x-x_0}{a} \right)^2w^2\right)e^{2s\gamma}dxdt \\
&\le C\left( \int_0^T \int_0^1
\frac{h^2 }{a}e^{2s\gamma} dxdt + + \int_0^T \left(\int_{\bar\lambda_1} ^{\tilde \lambda_1}+\int_{\tilde\lambda_2}^{\bar \beta_2}\right)v^2dxdt\right)\\
&\le C\left( \int_0^T \int_0^1
\frac{h^2 }{a}e^{2s\gamma} dxdt + + \int_0^T \left(\int_{\bar\lambda_1} ^{\tilde \lambda_1}+\int_{\tilde\lambda_2}^{\bar \beta_2}\right)\frac{v^2}{a}dxdt\right).
\end{aligned}
\]
Hence, we can choose $s_0$ so large that, for all $s \ge s_0$ and for a positive constant $C$, 
\[
\begin{aligned}
&\int_0^T\int_0^1 \left(s\Theta (v_x)^2 + s^3 \Theta^3
\left(\frac{x-x_0}{a} \right)^2 v^2\right)e^{2s\gamma}dxdt \le C  \int_0^T \int_\omega \frac{v^2}{a}dxdt
\\
&+ C\left( \int_0^T \int_0^1
\frac{h^2}{a}e^{2s\gamma(t,x)} + \int_0^T \int_0^{\beta_1}
 h^2 e^{2s \Phi_1(t,-x)}dxdt+ \int_0^T \int_{\lambda_2}^1 h^2e^{2s \Phi_2(t,x)}dxdt\right).
\end{aligned}
\]\end{proof}
\section{Observability inequalities as applications of Carleman estimates}\label{secobserv}

In this section we consider problem \eqref{linear1} and we make the following assumptions:

we assume that the control set $\omega$ is an interval which contains the degeneracy point
or the union of  two intervals each of them
lying on one side of the degeneracy point, i.e.
\begin{itemize}
\item 
\begin{equation}\label{omega1}
\omega=(\alpha,\beta) \subset (0,1) \mbox{ is such that $x_0 \in
\omega$},
\end{equation}
or
\item $\omega = \omega_1 \cup
\omega_2,$ where
\begin{equation}\label{omega2}
\omega_i=(\lambda_i,\beta_i) \subset (0,1), \, i=1,2, \mbox{ and
$\beta_1 < x_0< \lambda_2$}.
\end{equation}
\end{itemize}
\begin{Remark}\label{beta1}
Observe that, if \eqref{omega1} holds, we can find two subintervals
$\omega_1=(\lambda_1,\beta_1)\subset (\alpha, x_0),
\omega_2=(\lambda_2,\beta_2) \subset (x_0,\beta)$ such that $(\omega_1
\cup \omega_2) \subset \subset \omega \setminus \{x_0\}$. 
\end{Remark}

\begin{Assumptions}\label{hyp6.1} 
Assume Hypothesis $\ref{Ass03}$ (in the divergence case) or Hypothesis $\ref{Ass04}$ (in the nondivergence one) with $B_1= \beta_1$ and $B_2= 2-\lambda_2$.
\end{Assumptions}
\vspace{0.5cm}
Now, we associate to \eqref{linear1} the
homogeneous adjoint problem
\begin{equation}\label{h=0}
\begin{cases}
v_t +Av= 0, &(t,x) \in  Q_T,
\\[5pt]
v_x(t,0)=v_x(t,1) =0, & t \in (0,T),
\\[5pt]
v(T,x)= v_T(x)\in X,
\end{cases}
\end{equation}
where $T>0$ is given and, we recall, $X$ denotes the Hilbert space $L^2(0,1)$ or $L^2_{\frac{1}{a}}(0,1)$ in the divergence or in the non divergence case, respectively.
By the Carleman estimates given in Theorems
\ref{Cor1} and \ref{Cor2}, we will deduce the following observability inequalities
for both the weakly and the strongly degenerate cases:
\begin{Proposition}\label{obser.}
Assume Hypotheses $\ref{hyp6.1}$.
Then there exists a positive constant $C_T$ such that every solution
$v \in  C([0, T]; L^2(0,1)) \cap L^2 (0,T; H^1_a(0,1))$ of
\eqref{h=0} with $Au=A_1u$ satisfies
 \begin{equation}\label{obser1.}
\int_0^1v^2(0,x) dx \le C_T\int_0^T \int_{\omega}v^2(t,x)dxdt.
\end{equation}
\end{Proposition}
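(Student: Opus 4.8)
The plan is to deduce \eqref{obser1.} from the Carleman estimate \eqref{carleman1_1} applied with $h\equiv0$, combined with a dissipativity estimate for the backward adjoint problem \eqref{h=0}, in the by now classical way; the only genuinely new point is that the weight $\frac{(x-x_0)^2}{a}$ on the left of \eqref{carleman1_1} degenerates at $x_0$ and therefore, by itself, does not control $v$ near $x_0$.

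First I would record the dissipativity of \eqref{h=0}. Since $A_1$ is self--adjoint and nonpositive on $L^2(0,1)$ (Theorem \ref{prop}), multiplying $v_t+(av_x)_x=0$ by $v$ and using \eqref{greenformula} together with the homogeneous Neumann conditions (first for regular data, then by density) gives $\frac{d}{dt}\int_0^1 v^2(t,x)\,dx=2\int_0^1 a(x)v_x^2(t,x)\,dx\ge0$, so $t\mapsto\|v(t)\|^2_{L^2(0,1)}$ is nondecreasing; in particular $\|v(0)\|^2_{L^2(0,1)}\le\|v(t)\|^2_{L^2(0,1)}$ for every $t\in[0,T]$, whence
\[
\int_0^1 v^2(0,x)\,dx\le\frac{2}{T}\int_{T/4}^{3T/4}\int_0^1 v^2(t,x)\,dx\,dt .
\]
Hence it suffices to bound the right--hand side by $C_T\int_0^T\int_\omega v^2\,dx\,dt$.

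Next I would estimate the ``bulk''. Fix $s=s_0$ as in \eqref{carleman1_1} and fix $\rho>0$ with $(x_0-\rho,x_0+\rho)\subset(0,1)$. On the slab $t\in[T/4,3T/4]$ and for $|x-x_0|\ge\rho$ the functions $\Theta$ and $e^{2s\varphi}$ are bounded above and below by positive constants (recall $-c_1c_2\le\psi<0$ and $\Theta$ is bounded on $[T/4,3T/4]$), while $\frac{(x-x_0)^2}{a(x)}\ge c_\rho>0$ because $a$ is continuous and strictly positive there; hence \eqref{carleman1_1} with $h\equiv0$, together with $e^{2s\varphi}\le1$, yields
\[
\int_{T/4}^{3T/4}\int_{\{|x-x_0|\ge\rho\}}v^2\,dx\,dt\le C\int_0^T\int_0^1 s^3\Theta^3\frac{(x-x_0)^2}{a}v^2e^{2s\varphi}\,dx\,dt\le C\int_0^T\int_\omega v^2\,dx\,dt .
\]
If $x_0\in\omega$ this is essentially the whole proof: choosing $\rho$ so small that $(x_0-\rho,x_0+\rho)\subset\omega$ makes $\int_{T/4}^{3T/4}\int_{\{|x-x_0|<\rho\}}v^2\le\int_0^T\int_\omega v^2$ trivial, and combining with the dissipativity inequality gives \eqref{obser1.}.

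The main obstacle is the case $x_0\notin\omega$, i.e. $\omega$ lying on one side of $x_0$: then the previous step leaves $\int_{T/4}^{3T/4}\int_{\{|x-x_0|<\rho\}}v^2$ uncontrolled, precisely because the left--hand weight of \eqref{carleman1_1} vanishes at $x_0$. This is exactly what Hypothesis \ref{hyp6.1} is designed for: as in \cite{fm1}, the data $\fg,\fh,\fg_0,\fh_0$ satisfying \eqref{rieccola} allow one to prove an auxiliary, non--degenerate Carleman / Hardy--Poincar\'e estimate localized around $x_0$, whose observation term sits on the same side of $x_0$ as $\omega$, roughly of the form $\int_0^T\int_{\{|x-x_0|<\rho\}}v^2e^{2s\varphi}\,dx\,dt\le C\big(\int_0^T\int_{\{\rho/2\le|x-x_0|\le2\rho\}}v^2e^{2s\varphi}\,dx\,dt+\int_0^T\int_\omega v^2\,dx\,dt\big)$, in which the first term on the right is of the bulk type already estimated (after enlarging $\rho$ if necessary) and a Caccioppoli--type inequality is used to absorb the first--order terms produced along the way. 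An equivalent route is to reflect $v$ and $\omega$ to $(-1,2)$ as in \eqref{W} and apply the Dirichlet observability of \cite{fm1} to the cut--off $\xi W$, whose source is supported away from $x_0$ and is therefore controlled by the bulk estimate together with Caccioppoli. I expect establishing and combining the degenerate and the near--$x_0$ estimates so as to cross the degeneracy point from one side only to be the delicate part. In any case, once $\int_{T/4}^{3T/4}\int_0^1 v^2\le C_T\int_0^T\int_\omega v^2$ has been obtained, the dissipativity inequality yields \eqref{obser1.}.
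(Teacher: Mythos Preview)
Your treatment of the case $x_0\in\omega$ is correct and is in fact cleaner than the paper's: the paper does not use the monotonicity of $t\mapsto\|v(t)\|^2_{L^2}$ together with \eqref{carleman1_1}, but instead proves a strengthened Carleman inequality (Lemma~\ref{lemma3}) in which the full weighted left--hand side is bounded by $C\int_0^T\int_\omega v^2$, then shows that $t\mapsto\int_0^1 a v_x^2(t)\,dx$ is nondecreasing (multiplying by $v_t$), and finally invokes a Hardy--Poincar\'e inequality from \cite{fm} to pass from $\int_0^1 a v_x^2(0)$ to $\int_0^1 v^2(0)$. Your argument bypasses both Lemma~\ref{lemma3} and Hardy--Poincar\'e in this case, and as a bonus uses only Hypothesis~\ref{Ass03} rather than the full Hypothesis~\ref{hyp6.1}.

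For $x_0\notin\omega$, however, there is a genuine gap. Your description of what Hypothesis~\ref{hyp6.1} buys is not right: condition \eqref{rieccola} does \emph{not} yield a ``non--degenerate Carleman estimate localized around $x_0$''---no such estimate can exist, since the equation is genuinely degenerate at $x_0$. In \cite{fm1} (and in the paper's Lemma~\ref{lemma3}) \eqref{rieccola} is used to apply a non--degenerate Carleman estimate with non--smooth coefficient on subintervals that \emph{avoid} $x_0$. The region containing $x_0$ is handled separately, by a reflection and cut--off that produce Dirichlet boundary conditions, after which the \emph{degenerate} Dirichlet Carleman estimate (Theorem~\ref{Cor1fm1}) applies; the source terms created by the cut--off live in $\omega$ (and its reflection) and are absorbed via Caccioppoli. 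A further non--trivial point, absent from your sketch, is that the various pieces carry different Carleman weights ($\varphi$ versus the non--degenerate weight $\Phi$ of \eqref{c_1nd}), and one must choose the constant $c_1$ large enough (cf.\ \eqref{c_1magg'}) to compare them. Your ``equivalent route'' via Dirichlet observability on the reflected domain also does not work as stated: after reflection there are several degeneracy points, so the Dirichlet observability of \cite{fm1} on $(0,1)$ is not directly applicable; the paper instead applies the Dirichlet \emph{Carleman estimate} to a carefully cut--off piece containing only $x_0$. In short, your proposal for $x_0\notin\omega$ identifies the difficulty but does not resolve it, and the heuristic you give for the role of \eqref{rieccola} points in the wrong direction.
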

\begin{Proposition}\label{obser.1}
Assume Hypotheses $\ref{hyp6.1}$.
Then there exists a positive constant $C_T$ such that every solution
$v \in  C([0, T]; L^2_{\frac{1}{a}}(0,1)) \cap L^2 (0,T; H^1_{\frac{1}{a}}(0,1))$ of
\eqref{h=0} with $Au=A_2u$ satisfies
 \begin{equation}\label{obser1.1}
\int_0^1v^2(0,x) \frac{1}{a}dx \le C_T\int_0^T \int_{\omega}v^2(t,x)\frac{1}{a}dxdt.
\end{equation}
\end{Proposition}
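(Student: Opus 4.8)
The plan is to obtain Proposition~\ref{obser.1} from the Carleman estimate of Theorem~\ref{Cor2} along the classical route — Carleman estimate, dissipativity, localization around $x_0$ — with a second use of the reflection procedure of Section~\ref{Carleman estimate nondiv} entering only in the sub-case where $\omega$ does not contain $x_0$; Proposition~\ref{obser.} follows in exactly the same way from Theorem~\ref{Cor1}, replacing $\frac1a\,dx$ by $dx$, Lemma~\ref{green1} by Lemma~\ref{green}, and \eqref{car'}, \eqref{car'_1} by \eqref{carleman1}, \eqref{carleman1_1}. First I would reduce to regular solutions: since the Carleman estimates hold for $v\in\mathcal S$, it suffices to prove \eqref{obser1.1} for $v_T$ ranging over a dense subspace of $X=L^2_{\frac1a}(0,1)$ for which the corresponding solution of~\eqref{h=0} lies in $\mathcal S$ — for instance $\bigcap_{n}D(A_2^{n})$, which is dense because $A_2$ generates a $C_0$-semigroup — the general case then following by density and the continuous dependence estimate \eqref{stima2}. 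For such a $v$, multiplying the equation of~\eqref{h=0} by $v/a$, integrating over $(0,1)$ and using Lemma~\ref{green1} together with the nonpositivity of $A_2$, one finds that $t\mapsto\|v(t)\|_X^2$ is nondecreasing, so that $\|v(0)\|_X^2\le\frac2T\int_{T/4}^{3T/4}\|v(t)\|_X^2\,dt$; since on $[T/4,3T/4]\times[0,1]$ the quantities $\Theta$, $\Theta e^{2s\gamma}$, $\Theta^3 e^{2s\gamma}$ and $e^{2s\gamma}$ are bounded above and below by positive constants (for $s$ fixed), the matter is reduced to estimating $\int_0^T\int_0^1\frac{v^2}{a}e^{2s\gamma}\,dx\,dt$ by a constant times the left-hand side of~\eqref{car'}.

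Suppose first that $x_0\in\omega$, and fix an interval $A$ with $x_0\in A$ and $\bar A\subset\omega$. On $(0,1)\setminus A$ one has $\frac1a\le C\big(\frac{x-x_0}{a}\big)^2$, because there $|x-x_0|$ is bounded below and $a$ is bounded; hence $\int_0^T\int_{(0,1)\setminus A}\frac{v^2}{a}e^{2s\gamma}\,dx\,dt$ is controlled by the $s^3\Theta^3\big(\frac{x-x_0}{a}\big)^2v^2$ term on the left of~\eqref{car'}, which, by~\eqref{car'_1} with $h=0$, is $\le C\int_0^T\int_\omega v^2 e^{2s\gamma}\,dx\,dt\le C\int_0^T\int_\omega\frac{v^2}{a}\,dx\,dt$ (using $v^2\le\|a\|_\infty\frac{v^2}{a}$ and the boundedness of $e^{2s\gamma}$). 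On the other hand $\int_0^T\int_A\frac{v^2}{a}\,dx\,dt\le\int_0^T\int_\omega\frac{v^2}{a}\,dx\,dt$, since $\bar A\subset\omega$. Adding these two estimates and combining with the dissipativity inequality yields~\eqref{obser1.1}.

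When instead $\omega$ lies on one side of $x_0$, $A$ cannot be chosen inside $\omega$ and \eqref{car'_1} is no longer available, so one must carry the observation across the degeneracy; this is where the reflection enters again. Exactly as in the proof of Theorem~\ref{Cor2}, extend $v$ evenly through $x=0$ and $x=1$ to $W$ on $(-1,2)$ and set $Z=\xi W$, which solves a \emph{Dirichlet} problem on an interval $I\supset[0,1]$ whose only degeneracy point is $x_0$, with source $H=\tilde a(\xi_{xx}W+2\xi_xW_x)$ supported on the intervals where $\xi_x\neq0$, that is, where $\tilde a$ is nondegenerate. To $Z$ — on $I$, with observation region $\omega$ — I would apply the observability inequality for the Dirichlet interior-degenerate problem established in~\cite{fm1} (which is exactly where the structural conditions of Hypothesis~\ref{hyp6.2}, namely~\eqref{5.3'}, are used, serving to tame the neighbourhood of $x_0$) for a control interval contained in, or lying on one side of, the degeneracy point; the source $H$ is then absorbed by a Caccioppoli inequality on the nondegenerate intervals supporting $\xi_x$, together with that same observability inequality. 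Since $Z=v$ and $\tilde a=a$ on $[0,1]$ and $\|v(0)\|_X^2\le\|Z(0)\|^2_{L^2_{1/\tilde a}(I)}$, undoing the reflection gives~\eqref{obser1.1}.

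The delicate point is precisely this last sub-case. Near $x_0$ the Carleman estimate controls only the quantity $\big(\frac{x-x_0}{a}\big)^2v^2$, which degenerates there and hence does not dominate $\frac{v^2}{a}$ on a neighbourhood of $x_0$; when $\omega$ already contains such a neighbourhood this costs nothing, but when $\omega$ is on one side one genuinely has to propagate information across the degeneracy, and it is this that makes the second reflection, the reduction to the Dirichlet interior-degenerate observability of~\cite{fm1}, and the treatment of the cut-off source $H$ — manageable only because $H$ lives away from $x_0$, where the equation is nondegenerate — the technical heart of the argument.
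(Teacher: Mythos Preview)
Your treatment of the sub-case $x_0\in\omega$ is correct and in fact more direct than the paper's. The paper first establishes an intermediate Carleman-type estimate (Lemma~\ref{lemma3''}) via several cut-offs, non-degenerate Carleman estimates and a reflection, and only then passes to $\int v^2(0)/a$ by way of a Hardy--Poincar\'e inequality; you instead apply~\eqref{car'_1} directly and use that $(x-x_0)^2/a^2$ dominates $1/a$ on $(0,1)\setminus A$, bypassing both the intermediate lemma and Hardy--Poincar\'e.

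The sub-case $x_0\notin\bar\omega$, however, has a genuine gap. Your $Z=\xi W$ solves an \emph{inhomogeneous} Dirichlet problem on $I$, so the observability inequality of~\cite{fm1} (stated for the homogeneous adjoint) does not apply; one must return to the Carleman estimate, and then the source contribution $\int_I H^2e^{2s\tilde\gamma}/\tilde a$ has to be absorbed. Caccioppoli reduces this to $\int W^2/\tilde a$ on a neighbourhood of $\operatorname{supp}\xi_x=[-\beta,-\alpha]\cup[1+\alpha,1+\beta]$, but under the even reflection these neighbourhoods correspond to intervals near $x=0$ and $x=1$ in $(0,1)$ --- the $\alpha,\beta$ here are the small parameters of Section~\ref{Carleman estimate nondiv}, unrelated to the endpoints of $\omega$ --- and there is no reason these intervals lie in $\omega$. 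Nor can they be absorbed into the Carleman left-hand side for $Z$, since $Z=\xi W$ vanishes at the outer edges of $\operatorname{supp}\xi_x$ while $W$ need not. So the loop does not close.

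The paper's remedy (see the proof of Lemma~\ref{lemma3}, second case, and the analogous argument for Lemma~\ref{lemma3''}) is to design every cut-off so that its derivative is supported \emph{inside} $\omega$, or in a set that reflects into $\omega$. One first localizes inside $\omega$ and applies Theorem~\ref{Cor2}; then, on the side of $\omega$ away from $x_0$, one applies the non-degenerate Carleman of~\cite[Theorem~3.2]{fm1} --- this is where~\eqref{5.3'} is actually used; finally, for the piece containing $x_0$, one reflects and applies the Dirichlet degenerate Carleman, with the cut-off again chosen so that its derivative, after reflection, lands back in $\omega$. The reflection thus serves to convert the Neumann boundary into a Dirichlet one on carefully chosen sub-intervals, not to reduce the whole problem to~\cite{fm1} in a single stroke.
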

\subsection{Proof of Proposition \ref{obser.}} In
this subsection we will prove, as a consequence of the Carleman
estimate given in Section \ref{Carleman estimate}, the
observability inequality \eqref{obser1.}.
The proof is similar to the one given in \cite{fm} or in \cite[Proposition 5.1]{fm1},  so we sketch it.
Thus, we consider the
adjoint problem with more regular final--time datum
\begin{equation}\label{h=01}
\begin{cases}
v_t +A_1v= 0, &(t,x) \in  Q_T,
\\[5pt]
v_x(t,0)=v_x(t,1) =0, & t \in (0,T),
\\[5pt]
v(T,x)= v_T(x) \,\in D(A_1^2),
\end{cases}
\end{equation}
where $
D(A_1 ^2) = \Big\{u \,\in \,D(A_1 )\;\big|\; A_1 u \,\in
\,D(A_1 ) \;\Big\}.$
Observe that $D(A_1 ^2)$ is densely
defined in $D(A_1 )$ (see, for example, \cite[Lemma 7.2]{b}) and
hence in $L^2(0,1)$. As in \cite{cfr}, \cite{cfr1}, \cite{f}, \cite{fm} or \cite{fm1},
letting $v_T$ vary in $D(A_1 ^2)$, we define the following class
of functions:
\[
\cal{W}_1:=\Big\{ v\text{ is a solution of \eqref{h=01}}\Big\}.\]
Obviously (see, for example, \cite[Theorem 7.5]{b})
$\cal{W}_1\subset
C^1\big([0,T]\:;\:H^2_a(0,1)\big) \subset \mathcal{V} \subset
\cal{U}_1,$
where
$
\cal{U}_1:= C([0,T]; L^2(0,1)) \cap L^2(0, T; H^1_a(0,1)).
$
We shall also need the following lemma, that deals with the
different situations in which $x_0$ is inside or outside the control
region $\omega$. The statements of the conclusions are the
same, however, the
proofs, though inspired by the same ideas, are different. For this reason we divide the proof into two parts.
\begin{Lemma}\label{lemma3}
Assume Hypotheses $\ref{hyp6.1}$. Then there exist two positive
constants $C$ and $s_0$ such that every solution $v \in \cal W_1$ of
\eqref{h=01} satisfies, for all $s \ge s_0$,
\[
\int_0^T\int_0^1\left( s \Theta a (v_{x})^{2} + s^3 \Theta ^3
\frac{(x-x_0)^2}{a} v^{2}\right) e^{{2s\varphi}}  dxdt\le C
\int_0^T\int_{\omega}v^{2} dxdt.
\]
Here $\Theta$ and $\varphi$ are as in Section \ref{Carleman estimate}.
\end{Lemma}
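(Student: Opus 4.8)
The plan is to deduce Lemma~\ref{lemma3} from the Carleman estimate \eqref{carleman1} of Theorem~\ref{Cor1} by absorbing the zero-order term $\int_0^T\int_0^1 v^2 e^{2s\varphi}\,dxdt$ on the right-hand side of \eqref{carleman1} (here $h=0$ since $v$ solves the homogeneous problem \eqref{h=01}) into a local observation over $\omega$, using a Caccioppoli-type energy estimate together with a cut-off argument in the spatial variable. The two cases — $x_0\in\omega$ and $\omega$ lying entirely on one side of $x_0$ — are treated separately because in the first case \eqref{carleman1_1} already gives the conclusion directly (with $h=0$), whereas in the second case the degeneracy point is not observed and one must transport information across $x_0$ using the global term $s^3\Theta^3\frac{(x-x_0)^2}{a}v^2 e^{2s\varphi}$, which controls $v$ away from $x_0$ but degenerates near it.

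For the case $x_0\in\omega$, I would simply invoke \eqref{carleman1_1} with $h=0$: every solution $v\in\mathcal W_1\subset\mathcal V$ satisfies
\[
\int_0^T\int_0^1\left(s\Theta a(v_x)^2 + s^3\Theta^3\frac{(x-x_0)^2}{a}v^2\right)e^{2s\varphi}\,dxdt \le C\int_0^T\int_\omega v^2 e^{2s\varphi}\,dxdt \le C\int_0^T\int_\omega v^2\,dxdt,
\]
the last inequality because $\varphi<0$ so $e^{2s\varphi}\le 1$, and the choice of $c_1$ large is exactly what is needed so that $s_0$ can be taken uniform. For the case $\omega=(\rho_1,\rho_2)$ lying, say, to the left of $x_0$ (the right case is symmetric), I would choose two points $\rho_1<\lambda_1<\lambda_2<\rho_2$ and a smooth cut-off $\eta$ with $\eta\equiv 1$ on $[0,\lambda_1]$, $\eta\equiv 0$ on $[\lambda_2,1]$, apply \eqref{carleman1} (with $h=0$), and split $\int_0^T\int_0^1 v^2 e^{2s\varphi}$ as the sum of the integral over $(0,1)\setminus\omega$ and over $\omega$. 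On $(0,1)\setminus\omega$ one has $|x-x_0|^2/a$ bounded below by a positive constant (since $a$ does not degenerate there), so $v^2 e^{2s\varphi}\le C\,\Theta^3\frac{(x-x_0)^2}{a}v^2 e^{2s\varphi}$ with $C$ absorbed by taking $s$ large; this term is then swallowed by the left-hand side, exactly as in the final display chain of the proof of Theorem~\ref{Cor1}. The integral over $\omega$ is already of the desired form modulo $e^{2s\varphi}\le 1$.

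The main obstacle — and the step requiring genuine work rather than bookkeeping — is justifying that one may indeed absorb the non-degenerate part of $\int v^2 e^{2s\varphi}$ into the left-hand side, which is delicate precisely because the left-hand side carries the degenerate weight $\frac{(x-x_0)^2}{a}$ that vanishes at $x_0$: the absorption is legitimate only on the region $(0,1)\setminus\omega$, where $x_0$ is excluded, so the argument crucially uses that $\omega$ is a neighborhood of $x_0$ (in the first case) or that, in the one-sided case, the degeneracy point $x_0\notin\overline{(0,1)\setminus\omega}\cap\{\text{bad set}\}$ — more precisely that $(0,1)\setminus\omega$ stays a positive distance from $x_0$ on the relevant side. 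This is why the hypothesis that $\omega$ contains $x_0$ or lies on one side of it cannot be dropped, and it is the point at which one must be careful about which subintervals of $(0,1)$ actually contain $x_0$. I would also note that the weaker energy/Caccioppoli inequalities needed to pass from $\mathcal W_1$ to general solutions in $\mathcal U_1$ are standard and follow by density of $D(A_1^2)$ in $L^2(0,1)$ together with the continuous dependence estimate \eqref{stima2w}, so the full observability inequality \eqref{obser1.} follows from Lemma~\ref{lemma3} by letting $s=s_0$ be fixed, bounding $e^{2s_0\varphi}$ below by a positive constant on a compact time subinterval $[T/4,3T/4]$, and using the dissipativity of the semigroup to compare $\|v(0)\|^2$ with $\int_{T/4}^{3T/4}\|v(t)\|^2\,dt$.
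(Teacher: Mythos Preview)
Your treatment of the case $x_0\in\omega$ is correct and in fact more economical than the paper's: invoking \eqref{carleman1_1} with $h=0$ and bounding $e^{2s\varphi}\le 1$ gives the conclusion immediately. The paper instead reproves this case by patching together a degenerate Carleman estimate on a neighborhood of $x_0$ with non-degenerate Carleman estimates (from \cite[Theorem~3.1]{fm1}) on the complementary intervals, using cut-offs and a reflection; this machinery is set up because it is genuinely needed for the second case, but for the first case your shortcut is valid.

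Your argument for the case $x_0\notin\bar\omega$, however, contains a real gap. If $\omega=(\rho_1,\rho_2)$ lies, say, to the left of $x_0$, then $x_0\in(\rho_2,1)\subset(0,1)\setminus\omega$, so the claim that ``on $(0,1)\setminus\omega$ one has $|x-x_0|^2/a$ bounded below by a positive constant (since $a$ does not degenerate there)'' is simply false: $a$ \emph{does} degenerate on $(0,1)\setminus\omega$, precisely at $x_0$. Consequently the term $\int_0^T\int_{(0,1)\setminus\omega} v^2 e^{2s\varphi}\,dxdt$ cannot be absorbed into $s^3\int_0^T\int_0^1\Theta^3\frac{(x-x_0)^2}{a}v^2 e^{2s\varphi}\,dxdt$ by taking $s$ large. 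The cut-off $\eta$ you introduce is never actually used in your argument, and neither is the additional structural hypothesis \eqref{rieccola} in Hypotheses~\ref{hyp6.1}; both omissions are symptomatic of the missing ingredient.

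The paper's proof of this second case is substantially different. It localizes with cut-offs and applies three separate Carleman estimates: the degenerate Neumann estimate (Theorem~\ref{Cor1}) on a piece inside $\omega$; a \emph{non-degenerate} Carleman estimate with non-smooth coefficient (\cite[Theorem~3.1]{fm1}) on the interval between $\omega$ and the boundary $x=1$ not containing $x_0$ --- this is exactly where condition \eqref{rieccola} enters; and, via a reflection across $x=0$, the degenerate \emph{Dirichlet} Carleman estimate (Theorem~\ref{Cor1fm1}) on an interval containing $x_0$ whose observation region is mapped back into $\omega$. The condition ``$c_1$ sufficiently large'' in the Lemma is needed precisely to compare the degenerate weight $\varphi$ with the non-degenerate weight $\Phi$ (see \eqref{c_1magg'}--\eqref{seconda'}), so that the pieces can be glued. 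Your proposal bypasses all of this, and in doing so fails to control $v$ near the unobserved degeneracy point.
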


\begin{proof} The proof of Lemma \ref{lemma3} is divided into two parts to distinguish the cases when $\omega$ is an interval which contains the degeneracy point or it is an interval lying on one
side of the degeneracy point.

{\it First case: $\omega=(\alpha,\beta) \subset (0,1)$ is such that $x_0 \in
\omega$.
}

As in the proof of Theorem \ref{Cor1},  fix $\bar \lambda_i$, $\tilde\lambda_i$, $\bar \beta_i$ $(i=1,2)$, and smooth functions $\xi, \eta$ as in \eqref{xi1} and \eqref{eta}.
    Define $w:= \eta v$, where $v$ solves \eqref{h=01}.
   Hence, $w$   satisfies
    \begin{equation}
    \label{eq-w*}
    \begin{cases}
      w_t + (a  w_x) _x =( a \eta _x v )_x + \eta _x a v_x =:f,&
      (t,x) \in(0, T)\times (0,1), \\
   w(t,0)= w(t,1)=0, & t \in (0,T).
    \end{cases}
\end{equation}
Applying Theorem \ref{Cor1}, we have that there exist two positive constants $C$ and $s_0$ such that
\begin{equation}\label{car9}
\begin{aligned}
& \int_0^T \int_0^1\Big( s \Theta  a (w_x)^2 + s^3 \Theta^3
   \frac{(x-x_0)^2}{a} w^2 \Big)
    e^{2s \varphi} \, dx dt  \le C \left(\int_0^T \int_\omega w^2  dxdt\right)
    \\
    &+ C\left(\int_0^T \int_0^1f^2e^{2s\varphi}+ \int_0^T \int_0^{\beta_1}
 f^2 e^{2s \Phi_1(t,-x)}dxdt+ \int_0^T \int_{\lambda_2}^1 f^2e^{2s \Phi_2(t,x)}dxdt\right),
\end{aligned}
\end{equation}
for all $s \ge s_0$. Then, using the definition of $\eta$  and in
particular the fact that  in $[0,1]$  the functions $\eta_x$ and  $\eta_{xx}$ are supported in
$\tilde \omega:= [\bar\lambda_1, \tilde\lambda_1]
\cup[ \tilde \lambda_2,\bar \beta_2]$, we can write
$
f^2=  (( a \xi _x v )_x + \xi _x a v_x)^2 \le C( v^2+
(v_x)^2)\chi_{\tilde \omega},$
since the function $a'$  is bounded on $\tilde \omega$. Hence,
applying the Caccioppoli inequality \cite[Proposition 4.2]{fm} and \eqref{car9}, we get
 \begin{equation}\label{stimacar}
\begin{aligned}
&\int_0^T\int_{\tilde\lambda_1}^{\tilde\lambda_2}\left( s \Theta a (v_x)^{2} + s^3
\Theta ^3 \frac{(x-x_0)^2}{a} v^{2}\right) e^{{2s\varphi}} dxdt
\\&\le \int_0^T \int_0^1\Big( s \Theta  a (w_x)^2 + s^3 \Theta^3
   \frac{(x-x_0)^2}{a} w^2 \Big)
    e^{2s \varphi} \, dx dt
     \\ & \le C\!\int_0^T \!\!\int_\omega v^2dxdt +C \! \int_0^T \!\!\int_{\tilde \omega}e^{2s\varphi}(
v^2+ (v_x)^2)dxdt+C \! \int_0^T \!\!\int_{\bar\lambda_1}^{\tilde\lambda_1}e^{2s\Phi_1(t,-x)}(
v^2+ (v_x)^2)dxdt\\
&+ C  \int_0^T \int_{\tilde\lambda_2}^{\bar\beta_2}e^{2s\Phi_2(t,x)}(
v^2+ (v_x)^2)dxdt \le C \int_0^T \int_{\omega} v^2dxdt,
\end{aligned} \end{equation} for a positive constant $C$.
Now, define   $Z:= \xi W$, where $W$ is defined in \eqref{W}. Then $Z$ is the solution of \eqref{p1} and \eqref{p1'} with $H:=( \tilde a \xi_x W)_x + \tilde a\eta_x W_x$.
   Proceeding as in proof of Theorem \ref{Cor1} and using the fact that in $[0,2]$ the functions $\xi_x, \xi_{xx}$ are supported in $\ds \left[\tilde \lambda_1, \bar \beta_1\right]\cup \left[\bar \lambda_2, \tilde \lambda_2\right]\cup \left[2-\tilde \lambda_2, 2 -\bar \lambda_2\right]$
we get
\begin{equation}\label{stimacar2}
\begin{aligned}
&\int_0^T\int_{\tilde \lambda_2}^1 \left(s\Theta a(v_x)^2 + s^3 \Theta^3
\frac{(x-x_0)^2}{a} v^2\right)e^{2s\varphi}dxdt\\
&\le \int_0^T\int_{\lambda_2}^{2-\lambda_2} \left(s\Theta \tilde a(Z_x)^2 + s^3 \Theta^3
\frac{(x-x_0)^2}{\tilde a} Z^2\right)e^{2s\varphi}dxdt
\\
&\le \int_0^T\int_{\lambda_2}^{2-\lambda_2} \left(s\Theta (Z_x)^2 + s^3 \Theta^3
 Z^2\right)e^{2s\Phi_2}dxdt
\le  C \int_0^T \int_\omega v^2dxdt.
 \end{aligned}
\end{equation}
Thus \eqref{stimacar} and \eqref{stimacar2} imply
    \begin{equation}\label{carin0}
    \begin{aligned}
      \int_{0}^T \int _{\tilde \lambda_1}^{1}  \Big( s \Theta  a (v_x)^2 + s^3 \Theta^3 \frac{(x-x_0)^2}{a}  v^2 \Big)
      e^{2s \varphi } \, dx dt
     \le  C \int_{0}^T \int _{\omega}   v^2  dxdt,
    \end{aligned}
\end{equation}
for some positive constant $C$. To complete the proof it is
sufficient to prove a similar inequality on the interval
$[0,\tilde\lambda_1]$. To this aim, we follow a reflection procedure as before
considering the problem \eqref{p1}. Hence, using the Caccioppoli's inequality (see, e.g., \cite[Proposition 5.2.]{fm1} and \cite[Theorem 3.1]{fm1},
\begin{equation}\label{stimacar20}
\begin{aligned}
& \int_0^T\int_{-\beta_1}^{\beta_1} \Big(s \Theta \tilde a (Z_x)^2+
s^3 \Theta^3  \frac{(x-x_0)^2}{\tilde a}Z^2\Big)
e^{2s\varphi}dxdt\\&
     \le k\int_0^T\int_{-\beta_1}^{\beta_1}( s \Theta (Z_x)^2e^{2s\Phi_1}+
s^3\Theta^3Z^2e^{2s\Phi_1} )dxdt  \le C
\int_0^T\int_{-\beta_1}^{\beta_1}e^{2s\Phi_1} H^2 dxdt \\
&\le C \int_0^T
\int_{-\bar\beta_1}^{-\tilde\lambda_1}e^{2s\Phi_1}( W^2+
(W_x)^2)dxdt + C\int_0^T \int_{\tilde\lambda_1}^{\bar\beta_1}e^{2s\Phi_1}(W^2+ (W_x)^2)dxdt\\
&\le C \int_0^T \int_{\lambda_1}^{\beta_1}W^2dxdt
\leq C \int_0^T \int_{\lambda_1}^{\beta_1}
v^2dxdt \le C \int_0^T \int_{\omega}
v^2dxdt,
\end{aligned}
\end{equation}
for all $s\geq s_0$.
Hence, by \eqref{stimacar20} and the definitions of $W$ and $Z$, we
get
\begin{equation}\label{car10}
\begin{aligned}
&\int_0^T\int_0^{\tilde\lambda_1}  \Big( s^3 \Theta^3
      \frac{(x-x_0)^2}{a}v^2+s \Theta a (v_x)^2\Big) e^{2s\varphi}dxdt\\
&
\le \int_0^T\int_{-\beta_1}^{\beta_1} \Big( s^3 \Theta^3
      \frac{(x-x_0)^2}{\tilde a}Z^2+s \Theta \tilde a (Z_x)^2\Big) e^{2s\varphi}dxdt
    \le C \int_0^T \int_{\omega} v^2dxdt,
\end{aligned}
\end{equation}
for a positive constant $C$.
Therefore, by \eqref{carin0} and \eqref{car10}, Lemma \ref{lemma3}
follows.

{\it Second case: $\omega$ is such that \eqref{omega2} holds.
}

Fix $\tilde \lambda_i, \bar \lambda_i, \bar \beta_i  \in (\lambda_i, \beta_i)$, $i=1,2$ and the smooth
functions $\xi$, $\eta$ as before.
Then, define $w:= \eta v$, where $v$ is any fixed solution of
\eqref{h=01}. Hence $w$ satisfies \eqref{eq-w*} and
$
f^2 \le C(
v^2+ (v_x)^2)\chi_{\tilde \omega},
$
where $\tilde \omega$ is as in the first case.
Applying Theorem \ref{Cor1} to $w$, we have that there exist two positive constants $C$ and $s_0$ such that
\begin{equation}\label{car9'}
\begin{aligned}
&\int_0^T \int_0^1\Big( s \Theta a  (w_x)^2 + s^3 \Theta^3
\frac{(x-x_0)^2}{a}\ w^2 \Big) e^{2s \varphi} \, dx dt \le C
\int_0^T \int_{\omega}w^2 dxdt\\
& + C\left(\int_0^T \int_0^1f^2e^{2s\varphi}dxdt +\int_0^T \int_0^{\beta_1}
 f^2 e^{2s \Phi_1(t,-x)}dxdt+ \int_0^T \int_{\lambda_2}^1 f^2e^{2s \Phi_2(t,x)}dxdt\right),
\end{aligned}
\end{equation}
for all $s \ge s_0$. Hence, using \cite[Proposition 4.2]{fm}, we find
\begin{equation}\label{6.14'}
\begin{aligned}
&\int_0^T\int_{\tilde \lambda_1}^{\tilde \lambda_2}\left( s \Theta a(v_x)^{2} + s^3
\Theta ^3 \frac{(x-x_0)^2}{a} v^{2}\right)
e^{{2s\varphi}} dxdt\\
&\le \int_0^T \int_0^1\Big( s \Theta  a(w_x)^2 + s^3
\Theta^3 \frac{(x-x_0)^2}{a}w^2 \Big)
e^{2s \varphi} \, dx dt\\
& \le C\int_0^T\int_\omega v^2 dxdt + C  \int_0^T \int_{\tilde \omega}e^{2s\varphi}( v^2+
(v_x)^2)dxdt \\
&+ C\left(\int_0^T \int_{\bar\lambda_1}^{\tilde\lambda_1}
 (v^2+v_x^2) e^{2s \Phi_1(t,-x)}dxdt+ \int_0^T \int_{\tilde\lambda_2}^{\bar \beta_2} (v^2+v_x^2) e^{2s \Phi_2(t,x)}dxdt\right)\\
&\le C \int_0^T \int_{\omega} v^2 dxdt.
\end{aligned}
\end{equation}
Finally, define
$Z:= \xi W$, where $W$ is given in \eqref{W}. As before,
\begin{equation}\label{carin0'}
\begin{aligned}
\int_0^T \int _{\tilde\lambda_2}^{1}  \Big( s \Theta  a (v_x)^2 + s^3
\Theta^3 \frac{(x-x_0)^2}{a}  v^2 \Big) e^{2s \varphi } \, dx dt \le
C \int_{0}^T \int _{\omega}   v^2  dxdt,
\end{aligned}
\end{equation}
for some positive constant $C$ and $s\geq s_0$.
To complete the proof it is sufficient to prove a similar inequality
for $x\in[0,\tilde\lambda_1]$.
Using a reflection procedure as in the first part of the proof
and applying \cite[Theorem 3.1]{fm1}, one has
\begin{equation}\label{car10'}
\begin{aligned}
\int_0^T\int_{0}^{\tilde\lambda_1}  \left(s\Theta a(v_x)^2 + s^3 \Theta^3
\frac{(x-x_0)^2}{a} v^2\right)e^{2s\varphi}dxdt\le C \int_0^T \int_{\omega} v^2dxdt,
\end{aligned}
\end{equation}
for a positive constant $C$ and $s$ large enough. Therefore, by
\eqref{6.14'}, \eqref{carin0'} and \eqref{car10'}, the conclusion follows.
\end{proof}

We underline that to prove Lemma \ref{lemma3} a crucial role is played by the Carleman estimates stated in \cite[Theorem 3.1]{fm1} for  non degenerate parabolic problems with  {\it non smooth coefficient}. Moreover, in order to apply such a result equation \eqref{rieccola} is essential.

\vspace{0.5cm}
Using Lemma \ref{lemma3}, we obtain the following result which is crucial to prove Proposition \ref{obser.}:
\begin{Lemma}\label{obser.regular}
Assume Hypotheses $\ref{hyp6.1}$. Then there
exists a positive constant $C_T$ such that every solution $v \in
\cal W_1$ of \eqref{h=01} satisfies
\eqref{obser1.}.
\end{Lemma}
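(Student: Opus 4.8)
The plan is to derive \eqref{obser1.} for the regular solutions $v \in \cal W_1$ by combining the Carleman-type estimate of Lemma \ref{lemma3} with a dissipativity estimate, in the spirit of \cite{fm} and \cite[Proposition 5.1]{fm1}, and then to pass to an arbitrary solution $v \in \cal U_1$ by density. The regularity $\cal W_1 \subset C^1\big([0,T];H^2_a(0,1)\big)$ makes all the computations below legitimate.

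First I would use that $A_1$ is nonpositive on $L^2(0,1)$: multiplying $v_t + A_1 v = 0$ by $v$ and integrating on $(0,1)$ gives $\frac{d}{dt}\|v(t)\|^2_{L^2(0,1)} = -2\int_0^1 (A_1 v)v\,dx \ge 0$, so $t \mapsto \|v(t)\|^2_{L^2(0,1)}$ is nondecreasing on $[0,T]$. Hence $\|v(0)\|^2_{L^2(0,1)} \le \|v(t)\|^2_{L^2(0,1)}$ for every $t$, and integrating over $[T/4,3T/4]$ yields
\[
\|v(0)\|^2_{L^2(0,1)} \le \frac{2}{T}\int_{T/4}^{3T/4}\int_0^1 v^2\,dx\,dt .
\]
It thus suffices to estimate $\int_{T/4}^{3T/4}\int_0^1 v^2\,dx\,dt$ by $C_T \int_0^T\int_\omega v^2\,dx\,dt$.

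For this I would exploit that on $[T/4,3T/4]$ the function $\Theta$ is bounded between two positive constants, while, since $-c_1c_2 \le \psi < 0$, the weight $e^{2s\varphi}$ is bounded below by a positive constant there (with $s = s_0$ fixed). Write $(0,1) = I_\varepsilon \cup D$ with $I_\varepsilon := (x_0-\varepsilon,x_0+\varepsilon)$ and $D := (0,1)\setminus I_\varepsilon$. On $D$ the coefficient $\frac{(x-x_0)^2}{a}$ is bounded below, so Lemma \ref{lemma3} gives $\int_{T/4}^{3T/4}\int_D v^2\,dx\,dt \le C\int_0^T\int_\omega v^2\,dx\,dt$, and the same lemma controls $\int_{T/4}^{3T/4}\int_0^1 a(v_x)^2\,dx\,dt$ by the right-hand side as well. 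For the contribution of $I_\varepsilon$: if $x_0 \in \omega$, one takes $\varepsilon$ so small that $I_\varepsilon \subset \omega$, and then $\int_{T/4}^{3T/4}\int_{I_\varepsilon} v^2\,dx\,dt \le \int_0^T\int_\omega v^2\,dx\,dt$ directly; if $\omega$ lies on one side of $x_0$, the Carleman coefficient $\frac{(x-x_0)^2}{a}$ degenerates on $I_\varepsilon$, and one instead trades it for the gradient term, bounding $\int_{T/4}^{3T/4}\int_{I_\varepsilon} v^2\,dx\,dt$ by a constant times $\int_{T/4}^{3T/4}\int_0^1 a(v_x)^2\,dx\,dt$ by a Caccioppoli/Hardy--Poincar\'e type argument that uses $v(t,\cdot) \in H^1_a(0,1)$, the integrability of $1/a$ near $x_0$ in the weakly degenerate case, and the vanishing $(av)(x_0) = 0$ together with $1/\sqrt{a} \in L^1$ in the strongly degenerate case, exactly as carried out in \cite{fm} and \cite[Proposition 5.1]{fm1}. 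Adding the two contributions gives the desired bound, hence \eqref{obser1.}, for every $v \in \cal W_1$.

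Finally, since $D(A_1^2)$ is dense in $L^2(0,1)$ and the solution map $v_T \mapsto v$ of \eqref{h=0} is continuous on $L^2(0,1)$ by Theorem \ref{prop}, the estimate \eqref{obser1.} extends from $\cal W_1$ to every solution $v \in \cal U_1$. I expect the main obstacle to be precisely the one-sided case of the $I_\varepsilon$ estimate: since $\frac{(x-x_0)^2}{a}$ vanishes at $x_0$, the Carleman term is useless on a neighbourhood of the degeneracy point, and one has to absorb that neighbourhood into the gradient term via a weighted Hardy/Caccioppoli inequality whose justification genuinely differs for weak and strong degeneracy.
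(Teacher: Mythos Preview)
Your overall strategy differs from the paper's in two places. First, you multiply the equation by $v$ to obtain monotonicity of $t\mapsto\|v(t)\|_{L^2(0,1)}^2$, whereas the paper multiplies by $v_t$ and obtains monotonicity of $t\mapsto\int_0^1 a\,v_x^2(t,x)\,dx$; both identities are correct. Second, instead of splitting $(0,1)=I_\varepsilon\cup D$ and estimating $\int v^2$ time-slice by time-slice, the paper keeps the gradient quantity all the way: from Lemma~\ref{lemma3} it gets $\int_0^1 a\,v_x^2(0,x)\,dx\le C\int_0^T\!\int_\omega v^2$, and only then converts to an $L^2$ bound on $v(0,\cdot)$ via the weighted Hardy--Poincar\'e inequality of \cite[Proposition~2.3]{fm}, applied with the weight $p(x)=(a(x)|x-x_0|^4)^{1/3}$ (resp.\ $p(x)=\max_{[0,1]} a\cdot|x-x_0|^{4/3}$ when $K\le 4/3$), chosen so that simultaneously $p\le C\,a$ and $p/(x-x_0)^2=(a/(x-x_0)^2)^{1/3}\ge C_3>0$ by the monotonicity of $a/(x-x_0)^2$.

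There is a genuine gap in your one-sided $I_\varepsilon$ step. The inequality you write,
\[
\int_{T/4}^{3T/4}\!\int_{I_\varepsilon} v^2\,dx\,dt\ \le\ C\int_{T/4}^{3T/4}\!\int_0^1 a\,(v_x)^2\,dx\,dt,
\]
is false under Neumann conditions: any nonzero constant $v\equiv c$ lies in $H^1_a(0,1)$ and makes the left side positive while the right side vanishes. Even adding a term $C\!\int_D v^2$ on the right (which you do control), the obvious argument via $v(x)=v(y_0)-\int_x^{y_0}v_x$ and Cauchy--Schwarz produces the factor $\int_x^{y_0}a^{-1}$, which diverges as $x\to x_0$ in the strongly degenerate case; the facts $(av)(x_0)=0$ and $1/\sqrt{a}\in L^1$ that you invoke lead, after writing $v(x)=a(x)^{-1}\!\int_{x_0}^x(a'v+av')$, to bounds of order $(x-x_0)^{1-2K}$, which are not integrable for $K\ge 1$. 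The references you cite, \cite{fm} and \cite[Proposition~5.1]{fm1}, do not carry out this local $I_\varepsilon$ estimate but rather the global Hardy--Poincar\'e route that the paper uses, so the appeal to them does not close the gap in the argument you sketch. Finally, the density extension to $\cal U_1$ is not part of this lemma, which concerns only $v\in\cal W_1$; that passage is handled separately in the proof of Proposition~\ref{obser.}.
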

\begin{proof}
The proof  is similar to the one of \cite[Lemma 5.3]{fm1}, but we quickly repeat it for the reader's convenience.

Multiplying the equation of \eqref{h=01} by $v_t$ and integrating by
parts over $(0,1)$, one has
\[
\begin{aligned}
&0 = \int_0^1(v_t+ (av_x)_x)v_t dx= \int_0^1 (v_t^2+ (av_x)_xv_t )dx
= \int_0^1v_t^2dx + \left[av_xv_t \right]_{x=0}^{x=1} \\&-
\int_0^1av_xv_{tx} dx= \int_0^1v_t^2dx -
\frac{1}{2}\frac{d}{dt}\int_0^1a(v_x)^2
 \ge - \frac{1}{2}
\frac{d}{dt}\int_0^1 a(v_x)^2dx.
\end{aligned}
\]
Thus, the function $t \mapsto \int_0^1 a(v_x)^2 dx$ is increasing
for all $t \in [0,T]$. In particular,
$$
\int_0^1 av_x(0,x)^2dx \le \int_0^1av_x(t,x)^2dx \mbox{ for every
}t\in[0,T].
$$
Integrating the last inequality over $\displaystyle\left[\frac{T}{4},
\frac{3T}{4} \right]$ and using Lemma \ref{lemma3} we have that there exists a positive constant $C$ such that
\[
\begin{aligned}
\int_0^1a(v_x)^2(0,x) dx &\le
\frac{2}{T}\int_{\frac{T}{4}}^{\frac{3T}{4}}\int_0^1a(v_x)^2(t,x)dxdt\\&\le
C_T \int_{\frac{T}{4}}^{\frac{3T}{4}}\int_0^1s\Theta
a(v_x)^2(t,x)e^{2s\varphi}dxdt
\le C \int_0^T \int_{\omega}v^2dxdt.
\end{aligned}
\]

Applying
the Hardy- Poincar\'{e} inequality given in \cite[Proposition 2.3]{fm} and the previous inequality, one has
\[
\begin{aligned}
\int_0^1 \left(\frac{a}{(x-x_0)^2}\right)^{1/3}v^2(0,x)dx &\leq
\int_0^1 \frac{p}{(x-x_0)^2} v^2(0,x)dx  \\
&\le C_{HP} \int_0^1 p(v_x)^2(0,x) dx \\&\le \max\{C_1, C_2\}C_{HP}
\int_0^1a(v_x)^2(0,x) dx\\& \le C \int_0^T\int_{\omega}v^2dxdt,
\end{aligned}
\]
for a positive constant $C$. Here $p(x) = (a(x)|x-x_0|^4)^{1/3}$ if
$K > \displaystyle\frac{4}{3}$ or $\displaystyle p(x) =\max_{[0,1]}
a|x-x_0|^{4/3}$ otherwise,
$$
C_1:=
\max\left\{\displaystyle\left(\frac{x_0^2}{a(0)}\right)^{2/3},\displaystyle\left(\frac{(1-x_0)^2}{a(1)}\right)^{2/3}\right\},
$$
$C_2:=
\max\left\{\displaystyle\frac{x_0^{4/3}}{a(0)},\displaystyle\frac{(1-x_0)^{4/3}}{a(1)}\right\}$
and $C_{HP}$ is the Hardy-Poincar\'{e} constant.

By \cite[Lemma 2.1]{fm}, the function $\displaystyle x\mapsto
\frac{a(x)}{(x-x_0)^2}$ is nondecreasing on $[0, x_0)$ and
nonincreasing on $(x_0,1]$; then
\[
\left(\frac{a(x)}{(x-x_0)^2}\right)^{1/3}\ge
C_3:=\min\left\{\left(\frac{a(1)}{(1-x_0)^2}\right)^{1/3},
\left(\frac{a(0)}{x_0^2}\right)^{1/3}\right\} >0.
\]
Hence
\[
C_3\int_0^1v(0,x)^2dx \le C \int_0^T\int_{\omega}v^2dxdt\] and the
thesis follows.

\end{proof}
Using Lemma \ref{obser.regular} and proceeding as in \cite[Proposition 5.1]{fm1}, one can prove Proposition \ref{obser.}.

\subsection{Proof of Proposition \ref{obser.1}} As for the proof of Proposition \ref{obser.1}, we consider again the adjoint problem \eqref{h=01} where the operator $A_1$ is replaced by $A_2$. In this case,
\[
\cal{W}_2:=\Big\{  v \text{ is a solution of \eqref{h=01}, with } A_2 \text{ in place of }A_1 \Big\}
\]
with $\cal{W}_2\subset
C^1\big([0,T]\:;\:H^2_{\frac{1}{a}}(0,1)\big) \subset \mathcal{S} \subset
\cal{U}_2,$ and
$
\cal{U}_2:= C([0,T]; L^2_{\frac{1}{a}}(0,1)) \cap L^2(0, T; H^1_{\frac{1}{a}}(0,1)).
$
As in \cite[Lemma 5.4]{fm1}, one can prove
\begin{Lemma}\label{lemma3''}
Assume Hypotheses $\ref{hyp6.1}$. Then there exist
two positive constants $C$ and $s_0$ such that every solution $v \in
\cal W_2$ of \eqref{h=01} satisfies
\[
\int_0^T\int_0^1\left( s \Theta  (v_{x})^{2} + s^3 \Theta ^3
\left(\frac{x-x_0}{a}\right)^2 v^{2}\right) e^{{2s\gamma}}  dxdt\le
C \int_0^T\int_{w}v^{2}\frac{1}{a} dxdt
\]
for all $s \ge s_0$.
Here $\Theta$ and $\gamma$ are as \eqref{c_1} and \eqref{gammabo}, respectively.
\end{Lemma}

The proof of the previous lemma is similar to the one of Lemma \ref{lemma3} with the suitable changes, but we repeat here for the reader's convenience. Also in this case, we underline that for the proof a crucial role is played by the Carleman estimates stated in \cite[Theorem 3.2]{fm1} for  non degenerate parabolic problems with  {\it non smooth coefficient}. Again, to apply such a result equation \eqref{5.3'} is essential.
Another important result to prove Lemma \ref{lemma3''} is
the following Caccioppoli inequality for the non divergence case:
\begin{Proposition}[Caccioppoli's inequality]\label{caccio1}
Assume that either the function $a$ is such that the associated operator $A_2$ is weakly degenerate and \eqref{5.3'} holds or the function $a$ is such that $A_2$ is strongly degenerate.
Moreover, let $I'$ and $I$ two open subintervals of $(0,1)$ such
that $I'\subset \subset I \subset  (0,1)$ and $x_0 \not
\in \overline{I}$. Let $\varphi(t,x)=\Theta(t)\Upsilon(x)$, where
$\Theta$ is defined in \eqref{c_1} and
$
\Upsilon \in C([0,1],(-\infty,0))\cap
C^1([0,1]\setminus\{x_0\},(-\infty,0))
$
satisfies
$
|\Upsilon_x|\leq\displaystyle \frac{c}{\sqrt{a}} \mbox{ in }[0,1]\setminus\{x_0\},
$
for some $c>0$. Then, there exist two positive constants
$C$ and $s_0$ such that every solution $v \in \cal W_2$ of the
adjoint problem \eqref{h=01} satisfies, for all $s\geq s_0$,
\[
   \int_{0}^T \int _{I'}   (v_x)^2e^{2s\varphi } dxdt
    \ \leq \ C \int_{0}^T \int _{I}   v^2  \frac{1}{a}dxdt.
\]
\end{Proposition}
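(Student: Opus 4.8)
The plan is to carry out the standard Caccioppoli argument adapted to the weight $\tfrac1a$ that is natural in the non divergence setting, using crucially that $I$ stays away from the degeneracy point, so that $a$ and $\tfrac1a$ are bounded there and the estimate is, locally on $I$, of purely non degenerate type. First I would fix a cut-off $\chi\in C^\infty_c(I)$ with $0\le\chi\le1$ and $\chi\equiv1$ on $I'$. Since $x_0\notin\overline I$ and $a$ is continuous and strictly positive off $x_0$, there are constants $0<a_0\le a_1$ with $a_0\le a\le a_1$ on $\supp\chi$, and there $|\Upsilon_x|\le c/\sqrt a\le c/\sqrt{a_0}$; moreover $\Upsilon\le-\delta<0$ on $[0,1]$ for some $\delta>0$, since $\Upsilon\in C([0,1],(-\infty,0))$. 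For $v\in\mathcal W_2$ I set $g(t):=\int_0^1\chi^2 e^{2s\varphi}\tfrac{v^2}{a}\,dx$; by the regularity $\mathcal W_2\subset C^1([0,T];H^2_{\frac1a}(0,1))$ and the fact that $\varphi(t,x)=\Theta(t)\Upsilon(x)$ with $\Theta(t)\to+\infty$ as $t\to0^+,T^-$, the function $g$ is $C^1$ on $(0,T)$ and both $g$ and $g'$ extend continuously to $[0,T]$ with $g(0)=g(T)=0$ (the exponential decay of $e^{2s\varphi}$ at the endpoints beats the bounded factors coming from $v$, $v_x$).

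Next I would differentiate $g$, insert the equation $v_t=-av_{xx}$ (so $v_t/a=-v_{xx}$), and integrate by parts in $x$; the boundary terms vanish because $\chi$ is compactly supported in $(0,1)$. This gives
\[
g'(t)=2s\!\int_0^1\!\chi^2\Theta'\Upsilon e^{2s\varphi}\frac{v^2}{a}\,dx
+4\!\int_0^1\!\chi\chi_x e^{2s\varphi}vv_x\,dx
+4s\!\int_0^1\!\chi^2\varphi_x e^{2s\varphi}vv_x\,dx
+2\!\int_0^1\!\chi^2 e^{2s\varphi}(v_x)^2\,dx .
\]
I would then solve for the good term $2\int\chi^2 e^{2s\varphi}(v_x)^2\,dx$ and apply Young's inequality to the two mixed terms containing $vv_x$, absorbing a fraction $2\eta\int\chi^2 e^{2s\varphi}(v_x)^2\,dx$ back to the left side. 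The remaining coefficients in front of $v^2$ are $\chi_x^2 e^{2s\varphi}$, $s^2\varphi_x^2e^{2s\varphi}=s^2\Theta^2\Upsilon_x^2 e^{2s\Theta\Upsilon}$ and $2s|\Theta'\Upsilon|e^{2s\varphi}$. The key elementary point is that these are bounded uniformly in $t\in(0,T)$ and $s\ge s_0$: on $\supp\chi$ one has $\Upsilon\le-\delta$ and $|\Theta'|\le 4T\,\Theta^{5/4}$ with $\Theta$ bounded below on $(0,T)$, so writing $u=s\Theta$ gives $s^2\Theta^2 e^{2s\Theta\Upsilon}\le u^2e^{-2\delta u}$ and $s\,\Theta^{5/4}e^{2s\Theta\Upsilon}\le s_0^{-1/4}u^{5/4}e^{-2\delta u}$, both bounded. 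Using in addition $e^{2s\varphi}\le1$, the boundedness of $\chi_x$ with $\supp\chi_x\subset I$, and $v^2\le a_1\tfrac{v^2}{a}$ on $\supp\chi$, every $v^2$ contribution is dominated by $C\int_I\tfrac{v^2}{a}\,dx$ with $C$ independent of $s\ge s_0$. Choosing $\eta=\tfrac14$ I obtain, for every $t\in(0,T)$,
\[
\frac32\int_0^1\chi^2 e^{2s\varphi}(v_x)^2\,dx\ \le\ g'(t)+C\int_I\frac{v^2}{a}\,dx .
\]

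Finally I would integrate this over $t\in(0,T)$: since $\int_0^T g'(t)\,dt=g(T)-g(0)=0$ by the vanishing of the boundary values noted above, and since $\chi\equiv1$ on $I'$, this yields
\[
\int_0^T\!\!\int_{I'}e^{2s\varphi}(v_x)^2\,dx\,dt\ \le\ \int_0^T\!\!\int_0^1\chi^2 e^{2s\varphi}(v_x)^2\,dx\,dt\ \le\ C\int_0^T\!\!\int_{I}\frac{v^2}{a}\,dx\,dt ,
\]
which is exactly the claimed estimate. The only points I expect to require genuine care are the uniform-in-$s$ control of the exponential weight products $s^2\Theta^2e^{2s\Theta\Upsilon}$ and $s\Theta^{5/4}e^{2s\Theta\Upsilon}$ (this is precisely what forces the restriction $s\ge s_0$), and the justification that $g$ and $g'$ extend continuously up to $t=0,T$ with $g$ vanishing there — both following from the regularity of $v\in\mathcal W_2$ combined with the blow-up of $\Theta$ at the endpoints and the strict negativity of $\Upsilon$. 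Note that the weak/strong degeneracy hypotheses, and in the weak case the identity \eqref{5.3'}, enter here only to guarantee that the class $\mathcal W_2$ is well defined and sufficiently regular; they play no role in the local computation on $I$.
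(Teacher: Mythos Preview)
Your argument is correct and is precisely the standard Caccioppoli computation the paper has in mind: the paper itself omits the proof, pointing to \cite[Proposition~5.4]{fm1}, which proceeds by exactly this scheme --- multiply by a localised weight $\chi^2 e^{2s\varphi}\tfrac1a$, differentiate in time, integrate by parts in $x$, and absorb the cross terms via Young's inequality, exploiting $x_0\notin\overline I$ only through the local bounds $0<a_0\le a\le a_1$ on $\supp\chi$. Your handling of the $s$-uniform bounds on $s^2\Theta^2 e^{2s\Theta\Upsilon}$ and $s\Theta^{5/4}e^{2s\Theta\Upsilon}$ and of the vanishing of $g$ at $t=0,T$ is the right way to close the estimate.
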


We omit the proof of the previous result since it is similar to the one of \cite[Proposition 5.4]{fm1}.

\begin{Remark}\label{prototipo}
Of course, our prototype for $\Upsilon$ is the function $\mu$
defined in \eqref{c_1'}. Indeed, if $\mu$ is as
in \eqref{c_1'}, then, by $\cite[Lemma 2.1]{fm}$,
\[
|\mu'(x)|=d_1\frac{|x-x_0|e^{R(x-x_0)^2}}{a(x)}=d_1\sqrt{\frac{|x-x_0|^2e^{2R(x-x_0)^2}}{a(x)}}\frac{1}{\sqrt{a(x)}}\leq
c\frac{1}{\sqrt{a(x)}}.
\]
\end{Remark}

\begin{proof}[Proof of Lemma \ref{lemma3''}]Assume, first of all, that $\omega=(\alpha, \beta) \subset (0,1)$ is such that $x_0 \in \omega$.

Take $\bar\lambda_i, \tilde\lambda_i, \bar\beta_1$, ($i=1,2$), and the smooth functions $\xi, \eta$ as in the proof of Theorem \ref{Cor1}.
    Define $w:= \xi v$, where $v$ solves \eqref{h=01}, where, in this case,  $A_1$ is replaced by $A_2$.
   Hence, $w$   satisfies
  \[
\begin{cases}
w_t + a  w_{xx} = a (\xi _{xx} v + 2\xi _x  v_x )=:f,&
(t,x) \in(0, T)\times (0,1), \\
w(t,0)= w(t,1)=0, & t \in (0,T).
\end{cases}
\]
Applying Theorem \ref{Cor2}, we have
\begin{equation}\label{car_nondiv}
\begin{aligned}
&\int_0^T \int_0^1\Big( s \Theta   (w_x)^2 + s^3 \Theta^3
   \left(\frac{x-x_0}{a}\right)^2\ w^2 \Big)
    e^{2s \gamma} \, dx dt
    \\& \le  C\left(\int_0^T\int_0^1 \frac{f^{2}}{a}e^{2s\gamma(t,x)}dxdt+ \int_0^T\int_0^1 w^{2}dxdt\right)\\
&+ C\left( \int_0^T \int_0^{\beta_1}
 f^2 e^{2s \Phi_1(t,-x)}dxdt+ \int_0^T \int_{\lambda_2}^1 f^2e^{2s \Phi_2(t,x)}dxdt\right),
\end{aligned}
\end{equation}
for all $s \ge s_0$ and for a positive constant $C$. Then, using the definition of $\eta$  and in
particular the fact that  $\eta_x$ and  $\eta_{xx}$ are supported in
$\tilde \omega$, we can write
$
f^2= (( a \eta _x v )_x + \eta _x a v_x)^2 \le ( v^2+
(v_x)^2)\chi_{\tilde \omega},$
since the function $a'$  is bounded on $\tilde \omega$.
Hence,
applying \eqref{car_nondiv} and Proposition \ref{caccio1} with $I' = \tilde \omega$ and $I= (\alpha, \beta_1) \cup (\lambda_2, \beta)$, we get, as in \eqref{stimacar}
 \[
\begin{aligned}
&\int_0^T\int_{\tilde\lambda_1}^{\tilde\lambda_2}\left( s \Theta  (v_x)^{2} + s^3
\Theta ^3\left( \frac{x-x_0}{a}\right)^2 \frac{v^{2}}{a}\right) e^{{2s\gamma}} dxdt
\\
&\le C\int_0^T\int_\omega v^2e^{2s \gamma}dxdt+ C \int_0^T \int_{I} v^2 \frac{1}{a}dxdt \le C\int_0^T \int_\omega  v^2 \frac{1}{a}dxdt,
\end{aligned} \]
for a positive constant $C$.

The rest of the proof is similar to the last part of the one of Lemma \ref{lemma3}.
\end{proof}

Thanks to Lemma \ref{lemma3''} we have the next observability inequality in the case
of a regular final--time datum:
\begin{Lemma}\label{obser.regular1'}
Assume Hypotheses $\ref{hyp6.1}$. Then there
exists a positive constant $C_T$ such that every solution $v \in
\cal W_2$ of \eqref{h=01} satisfies
 \eqref{obser1.1}.
\end{Lemma}

The proof of the previous result follows as in \cite[Lemma 5.5]{fm1}, but we can refer also to the proof of Lemma \ref{obser.regular}.

\vspace{0.5cm}
Using Lemma \ref{obser.regular1'}, one can prove, as in \cite{cfr} or \cite{cfr1}, Proposition \ref{obser.1}.

\section{Final comments}
We conclude the paper with some comments about the estimates \eqref{carleman1} and \eqref{car'}.

 A Carleman estimate similar to \eqref{carleman1} for the problem in divergence form can follow by \cite[Theorem 4.1]{acf} at least in the
strongly degenerate case and if the initial datum is more regular.
Indeed, in this case, given $u_0\in H^1_a(0,1)$, $u$ is a
solution of \eqref{linear} if and only if the restrictions of $u$ to
$[0, x_0)$ and to $(x_0,1]$, $u_{|_{[0,x_0)}}$ and
$u_{|_{(x_0,1]}}$, are solutions to
\begin{equation}\label{me1}
\begin{cases}
u_t - A_1u   =h(t,x) \chi_{\omega}(x), & \ (t,x) \in (0,T) \times (0,x_0), \\
    u(t,0)=(au_x)(t,x_0)=0, & \  t \in (0,T),\\
     u(0,x)=u_0(x)_{|_{[0,x_0)}},
\end{cases}
\end{equation}
and
\begin{equation}\label{me2}
\begin{cases}
u_t -  A_1u   =h(t,x) \chi_{\omega}(x), & \ (t,x) \in (0,T) \times (x_0,1), \\
u(t,1)=(au_x)(t,x_0)=0, & \  t \in (0,T),\\
u(0,x)=u_0(x)_{|_{(x_0,1]}},
\end{cases}
\end{equation}
respectively. This fact is implied by the characterization of the
domain of $ A_1$ given in Propositions \ref{domain} and by
the Regularity Theorems \ref{prop} when the initial datum is
more regular. On the other hand if $u_0$ is only of class
$L^2(0,1)$, the solution is not sufficiently regular to verify the
additional condition at $(t,x_0)$ and this procedure cannot be
pursued.

Moreover, in the weakly degenerate case, the lack of
characterization of the domain of $A_1$ doesn't let us
consider a decomposition of the system in two disjoint systems like
\eqref{me1} and \eqref{me2}, in order to apply the results of
\cite{acf}, not even in the case of a regular initial datum.

Even if the problem is in non divergence form and the initial data is more regular, the above decomposition doesn't work. Indeed in this case, using the characterization of the domain of $A_2$, one has that $(au_x)(t,x_0)=0$ (this equality holds only in the strongly degenerate case, see Proposition \ref{domain4}). But, to our best knowledge, the only result on Carleman estimates in this field is for problems with {\it pure} Neumann boundary conditions, in the sense that $u_x(t,x_0)=0$, and with {\it more regular degenerate functions} (see \cite{f}), that we don't have in our hands.

\end{document}